\def\la{\lambda}
\def\al{\alpha}
\def\r{\gamma}
\def\b{\beta}
\def\N{\mathbb{N}}
\def\Q{\mathbb{Q}}
\numberwithin{equation}{section}
\newtheorem{theo}{Theorem}[section]
\newtheorem{coro}[theo]{Corollary}
\newtheorem{lemm}[theo]{Lemma}
\theoremstyle{definition}
\newtheorem{defi}[theo]{Definition}
\newtheorem{remark}[theo]{Remark}
\newtheorem{remarks}[theo]{Remarks}
\numberwithin{equation}{theo}
\def\al{\alpha}
\def\r{\gamma}
\def\b{\beta}
\def\l{\lambda}
\def\Z{\mathbb{Z}}
\def\d{\mathbb{D}}
\def\Ajn{{\boldsymbol{\mathfrak{A}}^\jmath(n)}}
\def\fkA{{\mathfrak{A}}}
\def\j{{\jmath}}
\def\ZZ{\mathcal{Z}}
\def\bfU{{\mathbf U}}
\def\up{{v}}
\def\wK{{\widetilde K}}
\def\fS{{\mathfrak S}}
\def\GL{{\text{\rm GL}}}
\def\O{{\text{\rm O}}}
\def\End{{\text{\rm End}}}
\def\sH{{\mathcal H}}
\def\bsH{{\boldsymbol{\mathcal H}}}
\def\sZ{{\mathcal Z}}
\def\sD{{\mathcal D}}
\def\sM{{\mathcal M}}
\def\sS{{\mathcal S}}
\def\sO{{\mathcal O}}
\def\sT{{\mathcal T}}
\def\fkm{{\mathfrak m}}
\def\ro{{\text{\rm ro}}}
\def\co{{\text{\rm co}}}
\def\diag{{\text{\rm diag}}}
\def\Xizero{{\Xi_{2n+1}^{0\diag}}}
\def\wla{{\widetilde\la}}
\def\wmu{{\widetilde \mu}}
\def\wLa{{\widetilde\Lambda}}
\def\La{{\Lambda}}
\def\bfj{{\mathbf j}}
\def\SnrQ{{\sS^\jmath(n,r)_{\mathbb Q(v)}}}
\def\SnQ{{\sS^\jmath(n)_{\mathbb Q(v)}}}
\def\SnrZ{{\sS^\jmath(n,r)}}
\def\SrrZ{{\sS^\jmath(r,r)}}
\def\Xinr{{\Xi_{2n+1,2r+1}}}
\def\Xirr{{\Xi_{2r+1,2r+1}}}
\def\Xin{{\Xi_{2n+1}}}
\def\Xinrl{{\Xi_{2n+1,2r+1}^{0\diag}}}
\def\Xinl{{\Xi_{2n+1}^{0\diag}}}
\def\fkm{{\mathfrak m}}
\def\fkmAl{{\mathfrak m^{A,\mathbf 0}}}
\def\bfl{{\mathbf 0}}
\def\fkmAj{{\mathfrak m^{A,\mathbf j}}}
\def\bfe{{\mathbf e}}
\def\bfUjn{{\mathbf U^\jmath(n)}}
\def\UjnZ{{U^\jmath(n)_\sZ}}
\def\Om{{\Omega}}
\def\om{{\omega}}
\def\bsi{{\boldsymbol i}}
\def\im{{\text{\rm im}}}
\def\lra{{\longrightarrow}}
\def\AjnZ{{\mathfrak{A}^\jmath(n)_\sZ}}
\def\Inr{{I(2n+1,r)}}
\def\td{{\widetilde d}}
\def\bsSjnr{{\boldsymbol{\sS}^\jmath(n,r)}}
\def\bsS{{\boldsymbol{\sS}}}
\begin{document}

\title{A new realisation of the $i$-quantum group $\bfU^\jmath(n)$}
\date{\today}

\author{Jie Du and Yadi Wu}

\address{J. D., School of Mathematics and Statistics,
University of New South Wales, Sydney NSW 2052, Australia}
\email{j.du@unsw.edu.au}
\address{Y. W., Department of Mathematics, Tongji University, Shanghai, 200092, China}
\email{1710062@tongji.edu.cn}

\keywords{quantum group, $q$-Schur algebra, Hecke algebra, Schur--Weyl duality, quantum hyperalgebra, finite orthogonal group.}

\date{\today}

\subjclass[2010]{16T20,17B37, 20C08, 20C33, 20G43}
\thanks{The paper was written while the second author was visiting the University of New South Wales as a practicum student for a year. She would like to thank UNSW for the hospitality and the China Scholarship Council for the financial support.}

\begin{abstract}We follow the approach developed in \cite{BLM} and modified in \cite{DF} to investigate a new realisation for the $i$-quantum groups $\bfUjn$, building on the multiplication formulas discovered in \cite[Lem.~3.2]{BKLW}. This allows us to present $\bfUjn$ via a basis and multiplication formulas by generators. We also establish a surjective algebra homomorphism from a Lusztig type form of $\bfUjn$ to integral $q$-Schur algebras of type $B$. Thus, base changes allow us to relate representations of the $i$-quantum hyperalgebras of $\bfUjn$ to representations of finite orthogonal groups of odd degree in non-defining characteristics. This generalises part of Dipper--James' type $A$ theory to the type $B$ case.
\end{abstract}

\maketitle

\tableofcontents

\section{Introduction}
Arising from representations of finite groups of Lie type, Iwahori--Hecke algebras play an important role in the study of unipotent principal blocks. In late 1980s, Dipper--James \cite{DJ89} introduced $q$-Schur algebras to study the representations of finite general linear groups in non-defining characteristics. In an entirely different context, these algebras appeared earlier in the study of the Schur--Weyl duality for a quantum linear group or quantum $\mathfrak{gl}_n$. Thus, $q$-Schur algebras naturally link representations of quantum $\mathfrak{gl}_n$ with those of finite general linear groups. It is natural to expect that such a connection extends to finite orthogonal and symplectic groups. However, example calculations on characters showed that this is not the case. In fact, from the invariant theory point of view, it is the Brauer algebra (or the BMW-algebra in the quantum case), instead of a group (or Hecke) algebra, that is involved in the Schur--Weyl duality.

Recently, in their study of canonical bases for quantum symmetric pairs, Bao and Wang \cite{BW} introduced certain co-ideal subalgebras $U^\jmath$ and $U^\imath$ of quantum linear groups, whose corresponding quantum symmetric pairs in \cite{Le03} are of type AIII. We follow \cite{CLW} to call them $i$-quantum groups. Bao--Wang further proved that these $i$-quantum groups pair with the Hecke algebras of type $B$ in a Schur--Weyl duality, where two types of Hecke endomorphism algebras or $q$-Schur algebras play the bridging role. More interestingly, as revealed in \cite{BKLW}, these $q$-Schur algebras arise naturally from finite orthogonal groups of odd degree or finite symplectic groups. 

With an entirely different motivation, the structure and representations of general Hecke endomorphism algebras were investigated by B. Parshall, L. Scott, and the first author
over twenty years ago. They made a stratification conjecture about their structure. A slightly modified version of this conjecture has recently been proved in \cite{DPS} and applications to representations of finite groups of Lie type have been obtained. 
In order to extend these applications to $i$-quantum groups, we need to lift Bao-Wang's duality to the integral level, and hence, to the roots-of-unity level. This is the aim of the current paper.

Building on the work of Bao et al \cite{BKLW}, we will define an algebra homomorphism $\phi^\jmath$ from the $i$-quantum groups $\bfUjn$  into the direct product of the corresponding $q$-Schur algebras $\SnrQ$ of type $B$. We mainly focus on the determination of the homomorphic image of $\phi^\jmath$ in terms of a BLM type basis $\{A(\bfj)\}_{A,\bfj}$ (cf. \cite{BLM}). This then allows us to investigate the Lusztig type forms and their associated hyperalgebras for the $i$-quantum groups through a certain monomial basis. We thus establish an algebra epimorphism from the integral $i$-quantum group to the integral $q$-Schur algebras of type $B$. Note that, with this epimorphism, the representation category of $i$-$q$-Schur algebras becomes a full subcategory of that of the $i$-quantum group (or $i$-quantum hyperalgebra). We further prove that the map $\phi^\jmath$ is injective. This gives a new realisation of the $i$-quantum group $\bfUjn$ in terms of the basis $\{A(\bfj)\}_{A,\bfj}$ together with explicit multiplication formulas by generators.

Just like the original realisation by Beilinson--Lusztig--MacPherson for quantum $\mathfrak{gl}_n$, this work laid down a foundation for a further study of the above mentioned $i$-quantum hyperalgebra.
On the other hand, it should be plausible that a similar realisation can be obtained for $i$-quantum groups $\bfU^\imath(n)$ although the triangular relation between two bases is a bit subtle in this case.
We will get these done in a forthcoming paper.

We organise the paper as follows. We first review in \S2 the definition of $i$-quantum groups $\bfUjn$ and their realisation as coideal sublagebra of the quantum linear group $\bfU(\mathfrak{gl}_{2n+1})$. In particular, the natural representation $\Om$ of $\bfU(\mathfrak{gl}_{2n+1})$ and its tensor product $\Om^{\otimes r}$ restrict to become $\bfUjn$-modules. In \S3, we introduce $q$-Schur algebra of type $B$ through finite orthogonal groups $\O_{2r+1}$ as well as (Iwahori--)Hecke algebras of type $B_r$. We will also review the Schur--Weyl duality discovered by Bao--Wang \cite{BW}.
Section 4 is quite parallel to \cite[\S5.2--\S5.4]{BLM}. Building on \cite[Lem. 3.2]{BKLW}, we derive certain multiplication formulas in $\SnrQ$. The structure constants in these formulas are independent of $r$. This allows to extend these formulas to get similar formulas in the direct product $\SnQ$ of $\SnrQ$ (Theorem \ref{MF}). Using a triangular relation in \cite[Thm. 3.10]{BKLW}, we prove in \S5 that the subspace $\Ajn$ spanned by all $A(\bfj)$ is a subalgebra of $\SnQ$ (Theorem \ref{th2}). The above mentioned homomorphism $\phi^\jmath$ has the image $\Ajn$ (Theorem \ref{UjtoA}). Using $\phi^\jmath$, we then lift Bao--Wang's epimorphism to the integral level (Theorem \ref{ISW}). Finally, in the last section, we prove that $\phi^\jmath$ is injective and thus, we
establish the new realisation (Theorem \ref{new realn}).

\medskip
\noindent
{\bf Some notations.} For a positive integer $a$, let
$$[1,a]=\{1,2,\ldots,a\},\quad [1,a)=\{1,2,\ldots,a-1\}.$$
Let $\sZ=\mathbb Z[v,v^{-1}]$ be the integral Laurent polynomial ring. For $n>0$, we set
$$[\![n]\!]=\frac{v^{2n}-1}{v^{2}-1},\quad [n]=\frac{v^n-v^{-n}}{v-v^{-1}}\quad and\quad [n]!=[1][2]\dots [n].$$
Set $[\![0]\!]=[0]=0$ and $[0]^!=1$. We also define, for $s,t\in\Z$ with $t>0$,
$$\left[\begin{matrix}s\\t\end{matrix}\right]=\prod_{i=1}^t\frac{v^{s-i+1}-v^{-(s-i+1)}}{v^i-v^{-i}},\quad
\left[\begin{matrix}K;s\\t\end{matrix}\right]=\prod_{i=1}^t\frac{Kv^{s-i+1}-K^{-1}v^{-(s-i+1)}}{v^i-v^{-i}},
$$
where $K$ is an element in a $\Q(v)$-algebra.

\section{The $i$-quantum group $\bfU^\jmath(n)$: a first realisation}

In the study of quantum symmetric pairs, Bao and Wang introduced the following quantum algebra $\bfU^\jmath(n)$, extracted from a quantum symmetric pair of type AIII in \cite[\S7]{Le03}. 
Here we follow the definition from \cite[\S4.3]{BKLW} and the $n$ indicates the rank of the $i$-quantum group.
\begin{defi}\label{iQG}
The algebra $\bfU^{\j}(n)$ is defined to be the associative algebra over $\Q(v)$ generated by $e_i$, $f_i$, $d_a$, $d_{a}^{-1}$, $i\in[1,n]$, $a\in[1,n+1]$ subject to the following relations: for $i,j\in[1,n]$, $a,b\in[1,n+1]$,
\begin{itemize}
\item[(iQG1)] $d_ad_a^{-1}=d_a^{-1}d_a=1, d_ad_b=d_bd_a$;
\item[(iQG2)] $d_ae_jd_a^{-1}=v^{\delta_{a,j}-\delta_{a,j+1}}e_j,$
                     $d_af_jd_a^{-1}=v^{-\delta_{a,j}+\delta_{a,j+1}}f_j$, if $a\leq n$;\\
                     $d_{n+1}e_jd_{n+1}^{-1}=v^{-2\delta_{n,j}}e_j,$
                     $d_{n+1}f_jd_{n+1}^{-1}=v^{2\delta_{n,j}}f_j$;
\item[(iQG3)] $e_if_j-f_je_i=\delta_{i,j}\frac{d_id_{i+1}^{-1}-d_i^{-1}d_{i+1}}{v-v^{-1}}$, if $i,j\neq n$;
\item[(iQG4)] $e_ie_j=e_je_i, f_if_j=f_jf_i,$ if $|i-j|>1$;
\item[(iQG5)] $ e_i^2e_j+e_je_i^2=[2]e_ie_je_i,$ 
 $ f_i^2f_j+f_jf_i^2=[2]f_if_jf_i$, if $|i-j|=1$;
\item[(iQG6)] $f_n^{2}e_n+e_nf_{n}^2=[2]\big(f_ne_nf_n-(vd_nd^{-1}_{n+1}+v^{-1}d_n^{-1}d_{n+1})f_n\big),$\\
$e_n^{2}f_n+f_ne_{n}^2=[2]\big(e_nf_ne_n-e_n(vd_nd^{-1}_{n+1}+v^{-1}d_n^{-1}d_{n+1})\big).$
\end{itemize}
\end{defi}

Note that the subalgebra generated by $e_i$, $f_i$, $d_a$, $d_{a}^{-1}$, $i\in[1,n)$, $a\in[1,n]$ is isomorphic to the quantum linear group $\bfU(\mathfrak{gl}_{n})$ in the following sense.

\begin{defi}\label{Ugln}
 The quantum linear group is a Hopf algebra $\bfU(\mathfrak{gl}_{N})$
over $ \mathbb Q(\up)$ with generators
$$E_{a}, F_{a},  K_{j}^{\pm  1},\;a\in[1,N), j\in[1,N],$$
and relations:
\begin{enumerate}
\item[(QG1)]
$ K_iK_j=K_jK_i,\ K_jK_j^{-1}=K_j^{-1}K_j=1; $
\item[(QG2)]
$ K_jE_{b}=\up^{\delta_{j,b}-\delta_{j,b+1}}E_{b}K_j,\
K_jF_{b}=\up^{-\delta_{j,b}+\delta_{j,b+1}}F_{b}K_j; $
\item[(QG3)]
$ [E_{a},
F_{b}]=\delta_{a,b}\frac{\wK_a-\wK_a^{-1}}{\up-\up^{-1}}, 
$ where ${\widetilde K}_{a}=K_aK_{a+1}^{-1}$;
\item[(QG4)]
$ E_{a}E_{b}=E_{b}E_{a}, \  F_{a}F_{b}=F_{b}F_{a},$ if $|a-b|>1 ;$
\item[(QG5)]
For $ a,b\in[1,N)$ with $|a-b|=1,$
\begin{equation}
\begin{aligned}\notag
&E_{a}^2E_{b}-(\up+\up^{-1})E_{a}E_{b}E_{a}+E_{b}E_{a}^2=0,\\
&F_{a}^2F_{b}-(\up+\up^{-1})F_{a}F_{b}F_{a}+F_{b}F_{a}^2=0.
\end{aligned}
\end{equation}
\end{enumerate}
Its comultiplication and counit are defined, respectively, by
\begin{equation}\notag
\aligned
\Delta:\bfU(\mathfrak{gl}_{N})&\longrightarrow \bfU(\mathfrak{gl}_{N})\otimes \bfU(\mathfrak{gl}_{N}),\\
E_a&\longmapsto 1\otimes E_a+E_a\otimes \wK_a,\\
F_a&\longmapsto F_a\otimes 1+\wK_a^{-1}\otimes F_a,\\
K_j&\longmapsto K_j\otimes K_j,
\endaligned
\;\;\;\;\text{ and }\;\;\;\;
\aligned
\epsilon:\bfU(\mathfrak{gl}_{N})&\longrightarrow \Q(v),\\
             E_a&\longmapsto 0,\\
             F_a&\longmapsto 0,\\
             K_j&\longmapsto 1.\endaligned
\end{equation}
\end{defi}

The first realisation of $\bfU^\j(n)$ is to embed it into the quantum group $\bfU(\mathfrak{gl}_{2n+1})$; see \cite[Prop. 4.5]{BKLW}.$^1$\footnote{$^1$We corrected a typo in the correspondence $d_{n+1}\mapsto vK_{n+1}^{-2}$ there.} See also \cite[Prop.~6.2]{BW} for the first version.

\begin{lemm}\label{iota}
There is an injective $\Q(v)$-algebra homomorphism $\iota:\bfU^\j(n)\to\bfU(\mathfrak{gl}_{2n+1})$ defined, for $i\in[1,n]$, by 
$$\aligned
d_i&\longmapsto K_i^{-1}K_{2n+2-i}^{-1}, \qquad d_{n+1}\longmapsto v^{-1}K_{n+1}^{-2},\\
e_i &\longmapsto F_i+\wK_i^{-1}E_{2n+1-i}, \quad f_i\longmapsto E_i\wK_{2n+1-i}+F_{2n+1-i}.\endaligned$$
Moreover, relative to the coalgebra structure, $\iota(\bfU^\j(n))$ is a coideal of $\bfU(\mathfrak{gl}_{2n+1})$.
\end{lemm}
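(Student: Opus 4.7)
The plan is to verify, in turn, that the elements $\iota(d_a), \iota(e_i), \iota(f_i)$ inside $\bfU(\mathfrak{gl}_{2n+1})$ satisfy the defining relations (iQG1)--(iQG6) of Definition \ref{iQG}, then to establish injectivity, and finally to check the coideal property on generators. The first block of relations (iQG1)--(iQG2) is immediate from (QG1)--(QG2): since $\iota(d_i)$ is a product of two $K_j^{-1}$'s for $i\leq n$ and a scalar multiple of $K_{n+1}^{-2}$ for $i=n+1$, the commutation of $\iota(d_a)$ with each of $F_j$, $E_{2n+1-j}$ (resp.\ $E_j$, $F_{2n+1-j}$) simply sums the $v$-exponents contributed by (QG2), and the symmetry $i\leftrightarrow 2n+1-i$ supplies the correct total power of $v$, including the factor $v^{\pm 2}$ at the branch point $a=n+1$. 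For (iQG3) with $i,j\neq n$, I would expand $[\iota(e_i),\iota(f_j)]$ into four brackets; the two mixed brackets $[F_i,F_{2n+1-j}]$ and $[\wK_i^{-1}E_{2n+1-i},E_j\wK_{2n+1-j}]$ vanish because the involved index gaps exceed $1$, while the two genuine $[E,F]$-brackets combine, after gathering the $\wK$-factors, to produce $\delta_{i,j}\,\iota\bigl((d_id_{i+1}^{-1}-d_i^{-1}d_{i+1})/(v-v^{-1})\bigr)$. The Serre relations (iQG4)--(iQG5) reduce, after distributing, to the quantum Serre relations (QG5) applied separately to the pairs of indices $(i,j)$ and $(2n+1-i,2n+1-j)$, together with commutativity of the $F$-side and $E$-side summands whenever the relevant index gap exceeds $1$.

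The principal computational obstacle will be the $\imath$-Serre relation (iQG6). After substituting $\iota(e_n)=F_n+\wK_n^{-1}E_{n+1}$ and $\iota(f_n)=E_n\wK_{n+1}+F_{n+1}$ and expanding the trilinear products on both sides, one obtains a large sum of monomials in $E_n,E_{n+1},F_n,F_{n+1}$ decorated with $\wK$-factors. The plan is to group these by $E/F$-content: the purely $F_n,F_{n+1}$ monomials and the purely $E_n,E_{n+1}$ monomials match between the two sides via the quantum Serre relations (QG5) for the adjacent indices $n,n+1$, while the genuinely mixed monomials generate $[E_n,F_n]$- and $[E_{n+1},F_{n+1}]$-type contributions via (QG3), each carrying a factor $\wK_n^{\pm 1}$ or $\wK_{n+1}^{\pm 1}$. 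Careful bookkeeping of these factors should show that the residue is precisely the correction $[2]\bigl(vd_nd_{n+1}^{-1}+v^{-1}d_n^{-1}d_{n+1}\bigr)\iota(f_n)$ appearing on the right-hand side of (iQG6), using the identifications $\iota(d_nd_{n+1}^{-1})=K_n^{-1}K_{n+2}^{-1}\cdot vK_{n+1}^{2}$ and $\iota(d_n^{-1}d_{n+1})=K_nK_{n+2}\cdot v^{-1}K_{n+1}^{-2}$ to match the scalar coefficients exactly. The symmetric relation for $e_n^{2}f_n+f_ne_n^{2}$ is then handled in parallel.

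For injectivity, the cleanest route is through the natural representation $\Omega$ of $\bfU(\mathfrak{gl}_{2n+1})$ and its tensor powers: restricting $\Omega^{\otimes r}$ along $\iota$ produces a family of $\bfU^\jmath(n)$-modules whose combined action on $\bigoplus_{r\geq 0}\Omega^{\otimes r}$ is faithful, because a PBW-type monomial basis of $\bfU^\jmath(n)$ (constructed in \cite{BW}) has linearly independent images in $\bigoplus_r\End_{\Q(v)}(\Omega^{\otimes r})$; this independence is, in turn, a consequence of Bao--Wang's Schur--Weyl duality to be reviewed in \S3. Finally, the coideal property is a direct check on generators: $\Delta(\iota(d_a))$ is group-like and lies in $\iota(\bfU^\jmath(n))\otimes\iota(\bfU^\jmath(n))$, while for $\iota(e_i)=F_i+\wK_i^{-1}E_{2n+1-i}$ and $\iota(f_i)=E_i\wK_{2n+1-i}+F_{2n+1-i}$ one applies $\Delta$ summand by summand using Definition \ref{Ugln} and verifies that one tensor-factor in each of the resulting terms already lies in $\iota(\bfU^\jmath(n))$. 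This realises $\iota(\bfU^\jmath(n))$ as a one-sided coideal of $\bfU(\mathfrak{gl}_{2n+1})$, completing the plan.
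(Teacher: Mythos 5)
The paper itself does not prove this lemma; it defers entirely to \cite[Prop.~4.5]{BKLW} and \cite[Prop.~6.2]{BW}, so your proposal has to stand on its own and also fit the logical architecture of the rest of the paper. Your plan for verifying (iQG1)--(iQG6) by direct substitution and grouping by $E/F$-content is the right shape for that computation, and the coideal check is essentially correct: writing out $\Delta(\iota(e_i))=F_i\otimes 1+\wK_i^{-1}\otimes\iota(e_i)+\wK_i^{-1}E_{2n+1-i}\otimes\wK_i^{-1}\wK_{2n+1-i}$, every right tensor factor lies in $\iota(\bfU^\jmath(n))$ because $\wK_i^{-1}\wK_{2n+1-i}=K_i^{-1}K_{i+1}K_{2n+1-i}K_{2n+2-i}^{-1}$ is a scalar multiple of $\iota(d_id_{i+1}^{-1})$, whereas the left factors $\wK_i^{-1}$ are \emph{not} in the image; you should state explicitly which tensor leg is controlled, as that is what ``one-sided coideal'' means.

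The injectivity step, however, is where your plan fails. You propose to deduce injectivity of $\iota$ from faithfulness of the combined action on $\bigoplus_r\Omega^{\otimes r}$, and you attribute that faithfulness to Bao--Wang's Schur--Weyl duality. But the duality statement (Theorem \ref{Bao-Wang}(3)) only gives surjectivity of each $\rho_r^\jmath$ and commutativity of the two actions; it says nothing about a nonzero element of $\bfU^\jmath(n)$ acting nontrivially on \emph{some} $\Omega^{\otimes r}$. That faithfulness is a separate statement requiring either an explicit linear-independence computation on a PBW-type basis of images (which you do not supply and which is the genuine content), or the injectivity of $\rho^\jmath=\rho\circ\iota$. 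But in the paper's own later argument (Theorem \ref{new realn}), injectivity of $\rho^\jmath$ is obtained precisely by combining the injectivity of $\rho$ from \cite{DG} with the injectivity of $\iota$ \emph{asserted in this lemma}. Running your argument as written would therefore be circular in the context of this paper. The cited references avoid this by working internally in $\bfU(\mathfrak{gl}_{2n+1})$: the images under $\iota$ of PBW monomials of $\bfU^\jmath(n)$ are shown to be linearly independent by a filtration/leading-term argument in $\bfU(\mathfrak{gl}_{2n+1})$, with no appeal to tensor-space representations. You should replace the representation-theoretic injectivity step with such an internal triangular-decomposition argument, or else independently verify the claimed linear independence of images in $\prod_r\End(\Omega^{\otimes r})$ without appealing to the Schur--Weyl theorem.
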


Let $\Om=\Om_{2n+1}$ be the natural representation of $\bfU(\mathfrak{gl}_{2n+1})$ with a $\Q(v)$-basis $\{\om_1,\dots,\om_{2n+1}\}$ via the following actions:
\begin{equation}\label{NR}
E_h \om_{i}=\delta_{i,h+1}\om_h,\;F_{h}\om_{i}=\delta_{i,h}\om_{h+1},\; K_j\om_i=v^{\delta{i,j}}\om_i.
\end{equation}
Then $\Om^{\otimes r}$ becomes a $\bfU(\mathfrak{gl}_{2n+1})$-module via the actions:
$$E_h .\om_\bsi=\Delta^{(r-1)}(E_h)\om_\bsi,\;F_{h}.\om_\bsi=\Delta^{(r-1)}(F_h)\om_\bsi,\; K_j.\om_\bsi=\Delta^{(r-1)}(K_j)\om_\bsi,$$
where $\om_\bsi=\om_{i_1}\otimes\cdots\otimes\om_{i_r}$ for $\bsi=(i_1,\ldots,i_r)$, and
\begin{equation}\label{Dr-1}
\aligned
\Delta^{(r-1)}(K_j)&=\underbrace{K_j\otimes \cdots \otimes K_j}_r,\\
\Delta^{(r-1)}(E_h)&=\sum_{j=1}^r1\otimes\cdots\otimes1\otimes E_h\otimes\underbrace{\wK_h\otimes \cdots \otimes \wK_h}_{j-1},\\
\Delta^{(r-1)}(F_h)&=\sum_{j=1}^r\underbrace{\wK_h^{-1}\otimes \cdots \otimes \wK_h^{-1}}_{j-1}\otimes F_h\otimes1\otimes\cdots\otimes1.\\
\endaligned
\end{equation}
Thus, we obtain a $\mathbb Q(v)$-algebra homomorphism 
\begin{equation}\label{rho}
\rho_r:\bfU(\mathfrak{gl}_{2n+1})\to\End(\Om^{\otimes r}).
\end{equation} It is well-known that the image $\im(\rho_r)$ is the commutant subalgebra relative to a right action of the Hecke algebra of type $A$ (see Theorem \ref{Bao-Wang} below). This is called a $q$-Schur algebra
(of type $A$). We will soon see in next section that when we restrict $\rho_r$ to the subalgebra $\im(\iota)$:
$$\rho_r^\jmath:=\rho_r\circ\iota:\bfUjn\overset\iota\lra\bfU(\mathfrak{gl}_{2n+1})\overset{\rho_r}\lra\End(\Om^{\otimes r}),$$
the image $\im(\rho_r^\jmath)$ is the commutant subalgebra relative to a right action of the Hecke algebra of type $B$. This is called the $q$-Schur algebra of type $B$; see \cite[Thm. 6.27]{BW}.

\begin{lemm}\label{omega}The algebra $\bfU^{\j}(n)$ has an involutive automorphism 
$\omega$ defined by
    $$\omega(e_i)=f_i,\;\;\omega(f_i)=e_i,\;\;\omega(d_i)=d_i^{-1}\;\;(1\leq i\leq n),\omega(d_{n+1})=v^{-1}d_{n+1}^{-1}.$$
\end{lemm}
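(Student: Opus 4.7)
The plan is to check that the assignment $\omega$ extends to a well-defined algebra endomorphism by verifying that it preserves each defining relation (iQG1)--(iQG6), and then to note that $\omega^2$ fixes every generator: this is immediate for $e_i,f_i$ and for $d_i^{\pm 1}$ with $i\le n$, while for $d_{n+1}$ we have $\omega^2(d_{n+1})=\omega(v^{-1}d_{n+1}^{-1})=v^{-1}(v^{-1}d_{n+1}^{-1})^{-1}=d_{n+1}$. Hence $\omega$ will automatically be an involutive algebra automorphism.

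Relations (iQG1), (iQG4) and (iQG5) are formally symmetric under the interchange $e_i\leftrightarrow f_i$ together with $d_a\leftrightarrow d_a^{-1}$, so they are preserved at once. For (iQG2), the image of $d_ae_jd_a^{-1}=v^{\delta_{a,j}-\delta_{a,j+1}}e_j$ rearranges to the $f$-analogue once one inverts both sides of the conjugation; in the $d_{n+1}$ case the extra scalars from $\omega(d_{n+1})=v^{-1}d_{n+1}^{-1}$ and $\omega(d_{n+1}^{-1})=vd_{n+1}$ cancel in the conjugation $\omega(d_{n+1})\cdot f_j\cdot\omega(d_{n+1}^{-1})$. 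For (iQG3) with $i,j\ne n$, the swap $e_i\leftrightarrow f_i$ flips the sign of the commutator, which is matched by the sign flip of the Cartan numerator $d_id_{i+1}^{-1}-d_i^{-1}d_{i+1}$ under $d_a\mapsto d_a^{-1}$ (note that $i+1\le n$ here, so no $v^{-1}$ scalar intervenes).

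The only real obstacle is (iQG6), and it is where the asymmetric scalar in $\omega(d_{n+1})$ has a visible effect. Applying $\omega$ to the first identity yields
\begin{equation*}
e_n^2f_n+f_ne_n^2=[2]\bigl(e_nf_ne_n-(v^2d_n^{-1}d_{n+1}+v^{-2}d_nd_{n+1}^{-1})e_n\bigr),
\end{equation*}
whereas the second identity of (iQG6) reads
\begin{equation*}
e_n^2f_n+f_ne_n^2=[2]\bigl(e_nf_ne_n-e_n(vd_nd_{n+1}^{-1}+v^{-1}d_n^{-1}d_{n+1})\bigr).
\end{equation*}
The equivalence of these two statements reduces to verifying
$e_n(vd_nd_{n+1}^{-1}+v^{-1}d_n^{-1}d_{n+1})=(v^2d_n^{-1}d_{n+1}+v^{-2}d_nd_{n+1}^{-1})e_n$, which follows by commuting $e_n$ past each $d_a^{\pm 1}$ using (iQG2): one has $e_nd_n^{\pm 1}=v^{\mp 1}d_n^{\pm 1}e_n$ and $e_nd_{n+1}^{\pm 1}=v^{\pm 2}d_{n+1}^{\pm 1}e_n$, so the accumulated scalars on the two monomials are precisely $v\cdot v^{-3}=v^{-2}$ and $v^{-1}\cdot v^{3}=v^{2}$, exactly matching the coefficients on the right. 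A completely symmetric calculation (with $f_n$ in place of $e_n$ and inverse scalars in the commutations) disposes of the second identity of (iQG6), completing the verification.
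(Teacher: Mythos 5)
Your proposal is correct and takes the same approach the paper gestures at (``directly checked by the relations above''), simply carrying out the verification the paper leaves implicit. The key computation you identify—that conjugating $e_n$ past $d_n^{\pm1}$ and $d_{n+1}^{\pm1}$ produces exactly the scalars $v^{-2}$ and $v^2$ needed to reconcile $\omega$ applied to the first relation of (iQG6) with the second—is the genuine content, and your bookkeeping there ($e_n d_n^{\pm1}=v^{\mp1}d_n^{\pm1}e_n$, $e_n d_{n+1}^{\pm1}=v^{\pm2}d_{n+1}^{\pm1}e_n$, giving coefficients $v\cdot v^{-3}=v^{-2}$ and $v^{-1}\cdot v^3=v^2$) is right, as is the observation that the asymmetric scalar $v^{-1}$ in $\omega(d_{n+1})$ is precisely what makes $\omega^2=\mathrm{id}$.
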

\begin{proof}This can be directly checked by the relations above or modified from \cite[Lem. 6.1]{BW}(1);
compare \cite[Lem~6.5(1)]{DDPW08}.
\end{proof}

\section{The $q$-Schur algebra of type $B$}

For any field $k$, let $\GL_n(k)$ be the genernal linear group over $k$ and consider the group isomorphism
\begin{equation}\notag\label{vartheta}
\vartheta:\GL_n(k)\longrightarrow \GL_n(k),\;x\longmapsto J^{-1}(x^t)^{-1}J,
\end{equation}
where $J$ has entries $J_{i,j}=1$ whenever $i+j=n+1$ and 0 otherwise.
The orthogonal group
$$\O_n(k):=\{x\in\GL_n(k)\mid J=x^tJx\}\quad(\text{where char}(k)\neq2) $$
is the fixed-point group of $\vartheta$.
Let  $G(q):=\O_{2r+1}(k)$ for $k=\mathbb F_q$, the finite field of $q$ elements. 

Let 
$$\Lambda(n+1,r)=\{\la=(\la_1,\ldots,\la_{n},\la_{n+1})\in \mathbb N^{n+1}\mid \la_1+\cdots+\la_{n+1}=r\}.$$ 

Define the bijection
\begin{equation}\label{wla}
\aligned
\widetilde{\ }:\Lambda(n+1,r)&\longrightarrow\wLa(n+1,r)\subseteq\Lambda(2n+1,2r+1),\\
                    \la&\longmapsto\widetilde\la:= (\la_{1},\ldots,\la_n,2\la_{n+1}+1,\la_n,\ldots,\la_{1}),\\
\endaligned
\end{equation}
where $\wLa(n+1,r)$ is the image of $\La(n+1,r)$ of the map.
For $\la\in \Lambda(n+1,r)$, let $P_{\widetilde\la}(q)$ be the standard parabolic subalgebra of $\GL_{2r+1}(\mathbb F_q)$ associated with $\widetilde\la$, consisting of upper quasi-triangular matrices with blocks of sizes $\widetilde\la_i$ on the diagonal.  Let
$$P_\la(q)=P_{\widetilde\la}(q)\cap G(q).$$
Then $G(q)$ acts on the set $P_\la(q)\backslash G(q)$ of left cosets $P_\la(q)g$ in $G(q)$. For any commutative ring $R$, this action induces a permutation representation over $R$ which is isomorphic to the induced representations $\text{Ind}_{P_{\la}(q)}^{G(q)}1_R$ of the trivial representation $1_R$ to $G(q)$ 
and define
\begin{equation}\label{UPB}
\mathcal E_{q,R}(n,r)=\End_{RG(q)}\bigg(\bigoplus_{\la\in\Lambda(n+1,r)}\text{Ind}_{P_{\la}(q)}^{G(q)}1_R\bigg)^{\text{op}}.
\end{equation}
This is called the $q$-Schur algebra of type $B$ (compare the type $A$ case in \cite{DJ89}).


 This algebra has the following interpretation of Hecke endomorphism algebra. Let $\sH(B_r)$ be the Hecke algebra over $\sZ=\mathbb Z[v,v^{-1}]$ associated with the Coxeter system $(W,S)$ of type $B_r$,
 where $S=\{s_1,\ldots,s_{r-1},s_r\}$ has the Dynkin diagram: 
 \begin{center}
\begin{tikzpicture}[scale=1.5]
\fill (0,0) circle (1.5pt);
\fill (1,0) circle (1.5pt);
\fill (2,0) circle (1.5pt);
\fill (4,0) circle (1.5pt);
\fill (5,0) circle (1.5pt);
  \draw (0,0) node[below] {$_1$} --
        (1,0) node[below] {$_2$} -- (2,0)node[below] {$_3$}--(2.5,0);
\draw[style=dashed](2.5,0)--(3.5,0);
\draw (3.5,0)--(4,0) node[below] {$_{r-1}$};
\draw (4,0.05) --
        (5,0.05);
\draw (4,-0.05) --
        (5,-0.05) node[below]  {$_r$};
\end{tikzpicture}
\end{center}
 Then it is generated by $T_i=T_{s_i}$ for $1\leq i\leq r$ subject to the relations:
 $$\aligned
 &T_i^2=(v^2-1)T_i+v^2, \forall i; \;\; T_iT_j=T_jT_i, |i-j|>2,\\
 &T_jT_{j+1}T_j=T_{j+1}T_jT_{j+1}, 1\leq j<r-1;\\
 &T_{r-1}T_rT_{r-1}T_r=T_rT_{r-1}T_rT_{r-1}.
 \endaligned$$
 It has a basis $\{T_w\}_{w\in W}$. The subalgebra generated by $T_1,\ldots,T_{r-1}$ is the Hecke algebra $\sH(\fS_r)$ associated with the symmetric group $\fS_r$.

 For 
$\la\in\Lambda(n+1,r)$, let 
$W_\la$ be the parabolic subgroup of $W$ generated by 
$$ S\backslash\{s_{\la_1+\cdots+\la_i}\mid i\in[1,n]\},$$
and let
$x_\la=\sum_{w\in W_\la}T_w$. The Hecke endomorphism $\sZ$-algebra:
\begin{equation}\label{Snr}
\SnrZ=\End_{\sH(B_r)}\big(\sT(n,r)\big),\quad\text{where }
\sT(n,r)=\bigoplus_{\la\in\Lambda(n+1,r)}x_\la\sH(B_r)
\end{equation}
is called the (generic) {\it $q$-Schur algebra of type $B$}.

In order to label the standard basis of $\SnrZ$ by matrices, 
we consider the graph automorphism of the symmetric group $\fS_{2r+1}=\fS_{[1,2r+1]}$: 
$$\sigma:\fS_{2r+1}\to\fS_{2r+1}, (i,j)\mapsto(2r+2-i,2r+2-j)\text{ for all }i,j\in[1,2r+1].$$
If $\theta\in\fS_{2r+1}$ denotes the permutation sending $i$ to $2r+2-i$, then, for any $\pi\in\fS_{2r+1}$,
$\sigma(\pi)=\theta\circ\pi\circ\theta$ and $\sigma(i,j)=(\theta(i),\theta(j))$.
Further, we may identify $W$ as the fixed-point subgroup $\fS_{2r+1}^\sigma$ of $\sigma$ with
$$s_i=(i,i+1)(2r+2-i,2r+1-i)\;\;(1\leq i<r),\quad s_r=(r,r+1)(r+1,r+2)(r,r+1).$$
For the parabolic subgroup $W_\la$, $\la\in\Lambda(n+1,r)$, if
$$
\widetilde \la=(\la_1,\ldots,\la_n,2\la_{n+1}+1,\la_n,\ldots,\la_1)\in\Lambda(2n+1,2r+1).
$$ as in \eqref{wla},
then $\sigma$ stabilises the Young subgroup $\fS_{\widetilde\la}$ and $W_\la=\fS_{\widetilde\la}^\sigma$ is the fixed-point subgroup.

\begin{lemm}
For $\la,\mu\in\Lambda(n+1,r)$ and $\td\in\fS_{2r+1}$, suppose the double coset $\fS_{\widetilde\la}\td\fS_{\widetilde\mu}$ defines a $(2n+1)\times(2n+1)$ matrix $A=(a_{i,j})$ over $\mathbb N$ whose entries sum to $2r+1$.
Then $\fS_{\widetilde\la}\td\fS_{\widetilde\mu}$ is stabilised by $\sigma$ if and only if $a_{i,j}=a_{N+1-i,N+1-j}$ for all $i,j\in[1,N]$ ($N=2n+1$).
\end{lemm}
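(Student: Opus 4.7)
The plan is to translate $\sigma$-stability of the double coset into a symmetry condition on $A$ via the standard bijection between double cosets and matrices. For any composition $\nu$ of $2r+1$ with $2n+1$ parts, write $[1,2r+1]=R_1^\nu\sqcup\cdots\sqcup R_{2n+1}^\nu$ for the consecutive block decomposition with $|R_i^\nu|=\nu_i$; the Young subgroup $\fS_\nu$ is its stabiliser. The classical bijection sends $\fS_{\widetilde\la}\td\fS_{\widetilde\mu}$ to the matrix $A\in M_N(\N)$ with $\ro(A)=\widetilde\la$, $\co(A)=\widetilde\mu$, and $a_{i,j}=|R_i^{\widetilde\la}\cap\td(R_j^{\widetilde\mu})|$; this is the matrix intended in the statement.

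Next, I would describe the $\sigma$-action on double cosets. Since $\sigma$ is conjugation by the involution $\theta$, the first observation is that $\theta$ stabilises $\fS_{\widetilde\la}$ (and similarly $\fS_{\widetilde\mu}$). This uses the palindromy built into \eqref{wla}: $\widetilde\la_i=\widetilde\la_{2n+2-i}$. A direct computation of the endpoints of $\theta(R_i^{\widetilde\la})$, using $\theta(k)=2r+2-k$ and $\sum_j\widetilde\la_j=2r+1$, yields $\theta(R_i^{\widetilde\la})=R_{2n+2-i}^{\widetilde\la}$, a block of the same size; hence $\theta\fS_{\widetilde\la}\theta=\fS_{\widetilde\la}$ (consistent with the identification $W_\la=\fS_{\widetilde\la}^\sigma$ already noted in the paper). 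It then follows that
\[
\sigma(\fS_{\widetilde\la}\td\fS_{\widetilde\mu})=\theta\fS_{\widetilde\la}\td\fS_{\widetilde\mu}\theta^{-1}=\fS_{\widetilde\la}(\theta\td\theta)\fS_{\widetilde\mu}.
\]

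The final step is to compute the matrix $A'=(a'_{i,j})$ associated with $\theta\td\theta$ and compare it to $A$. Applying $\theta^{-1}=\theta$ to the defining intersection,
\[
a'_{i,j}=|R_i^{\widetilde\la}\cap\theta\td\theta(R_j^{\widetilde\mu})|=|\theta R_i^{\widetilde\la}\cap\td\theta(R_j^{\widetilde\mu})|=|R_{2n+2-i}^{\widetilde\la}\cap\td(R_{2n+2-j}^{\widetilde\mu})|=a_{N+1-i,N+1-j}.
\]
By the bijection, the double coset is $\sigma$-stable iff $A=A'$, i.e.\ iff $a_{i,j}=a_{N+1-i,N+1-j}$ for all $i,j$, which is exactly the claimed equivalence. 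There is no serious obstacle in the argument; once the matrix/double-coset dictionary, the palindromicity of $\widetilde\la$, and the identification of $\sigma$ with conjugation by $\theta$ are in place, the proof is pure bookkeeping.
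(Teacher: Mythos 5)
Your proposal is correct and follows essentially the same route as the paper: identify $\sigma$ with conjugation by $\theta$, observe that $\theta$ permutes the blocks $R_i^{\widetilde\la}$ by $i\mapsto 2n+2-i$ (using the palindromy of $\widetilde\la$), and compare the matrix of $\theta\td\theta$ with that of $\td$ via $|R_i^{\widetilde\la}\cap\theta\td\theta(R_j^{\widetilde\mu})|=|\theta(R_i^{\widetilde\la})\cap\td\theta(R_j^{\widetilde\mu})|=a_{N+1-i,N+1-j}$. The only cosmetic difference is that the paper phrases the final comparison as $|R^{\widetilde\la}_{N+1-i}\cap\sigma(\td)R^{\widetilde\mu}_{N+1-j}|=|\theta(R^{\widetilde\la}_i\cap\td R^{\widetilde\mu}_j)|=a_{i,j}$, whereas you index it as computing $a'_{i,j}$ directly; the content is the same.
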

\begin{proof}Let $R^\wla_i=\{\wla_1+\cdots\wla_{i-1}+1,\ldots,\wla_1+\cdots\wla_{i-1}+\wla_i\}$. Then
$a_{i,j}=|R^\wla_i\cap\td R^\wmu_j|$ and $\theta(R^\wla_i)=R^\wla_{N+1-i}$. Note that $\sigma$ stabilises $\fS_{\widetilde\la}$ and $\fS_{\widetilde\mu}$. Thus, $\sigma$ stabilises  $\fS_{\widetilde\la}\td\fS_{\widetilde\mu}$ (i.e., $\fS_{\widetilde\la}\td\fS_{\widetilde\mu}=\fS_{\widetilde\la}\sigma(\td)\fS_{\widetilde\mu}$) if and only if $|R^\wla_{N+1-i}\cap\sigma(\td) R^\wmu_{N+1-j}|=|R^\wla_{N+1-i}\cap \td R^\wmu_{N+1-j}|=a_{N+1-i,N+1-j}$ for all $i,j$. (See, e.g., \cite[Lem.~4.14]{DDPW08}.)
However,
$|R^\wla_{N+1-i}\cap\sigma(\td) R^\wmu_{N+1-j}|=|\theta (R^\wla_i\cap\td R^\wmu_j)|
=|R^\wla_i\cap\td R^\wmu_j|=a_{i,j}$.
\end{proof}
For $\la,\mu\in\Lambda(n+1,r)$, let $\sD_{\la,\mu}$ be the minimal length representatives of all double cosets $W_\la wW_\mu$. 

For $N=2n+1$, let 
\begin{equation}\label{Xi}
\aligned
\Xi_{2n+1}&=\big\{A=(a_{i,j})\in \text{Mat}_N(\mathbb{N}) \mid a_{i,j}=a_{N+1-i,N+1-j}, \forall i,j\in [1,N]\},\\
\Xi_{2n+1}^{0\diag}&=\big\{A-\diag(a_{1,1},a_{2,2},\ldots,a_{N,N})\mid A=(a_{i,j})\in\Xi_{2n+1}\},\\
\Xi_{2n+1,2r+1}&=\{A=(a_{i,j})\in\Xi_{2n+1}\mid |A|:=\sum_{i,j}a_{i,j}=2r+1\}.
\endaligned
\end{equation}
\begin{coro}There is a bijection 
$$\fkm: \{(\la,d,\mu)\mid \la,\mu\in\Lambda(n+1,r), d\in\sD_{\la,\mu}\}\longrightarrow \Xi_{2n+1,2r+1}.$$
\end{coro}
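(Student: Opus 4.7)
I would derive the corollary by combining the standard type $A$ parameterisation of double cosets by matrices with the $\sigma$-fixed-point criterion of the preceding lemma.

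The starting point is the classical type $A$ bijection: writing $N=2n+1$ and $M=2r+1$, the map $(\wla,\td,\wmu)\mapsto A=(a_{i,j})$ with $a_{i,j}=|R^\wla_i\cap\td R^\wmu_j|$ is a bijection between triples with $\wla,\wmu\in\La(N,M)$ and $\td\in\sD^{\fS_M}_{\wla,\wmu}$ and the full set of $N\times N$ matrices over $\N$ of entry sum $M$; see, for instance, \cite[Thm.~4.20]{DDPW08}. Restricting the parameters to $\wla,\wmu\in\wLa(n+1,r)$ gives, via the tilde bijection \eqref{wla}, exactly the parameterisation by $\la,\mu\in\La(n+1,r)$; the image matrices are then those whose row sums $\wla$ and column sums $\wmu$ are palindromic (i.e.\ $\wla_i=\wla_{N+1-i}$), with the middle entries automatically odd since $|A|=2r+1$.

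The preceding lemma now narrows this subset further: a triple corresponds under the classical bijection to an $\fS_M$-double coset $\fS_\wla\td\fS_\wmu$ that is $\sigma$-stable if and only if its matrix $A$ satisfies the symmetry $a_{i,j}=a_{N+1-i,N+1-j}$, i.e.\ $A\in\Xi_{2n+1,2r+1}$. It then remains to identify these $\sigma$-stable $\fS_M$-double cosets with triples $(\la,d,\mu)$ where $d\in\sD_{\la,\mu}$. Since $\sigma$ is a diagram automorphism of the Coxeter system on $\fS_M$, it preserves the $\fS_M$-length, so the unique minimum-length representative of any $\sigma$-stable double coset is $\sigma$-fixed and hence lies in $W=\fS_M^\sigma$. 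Under the identifications $W_\la=\fS_\wla^\sigma$ and $W_\mu=\fS_\wmu^\sigma$, the intersection $W\cap\fS_\wla\td\fS_\wmu$ is a single $W_\la$-$W_\mu$-double coset, whose type-$B$ minimum-length representative gives the desired $d\in\sD_{\la,\mu}$; conversely, any $d\in\sD_{\la,\mu}\subseteq W$ is $\sigma$-fixed, so $\fS_\wla d\fS_\wmu$ is automatically $\sigma$-stable. Setting $\fkm(\la,d,\mu)=A$ with $a_{i,j}=|R^\wla_i\cap dR^\wmu_j|$ then yields the claimed bijection.

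The main obstacle lies in the folding step: verifying that the $\sigma$-fixed part of a $\sigma$-stable $\fS_M$-double coset is a single $W_\la$-$W_\mu$-double coset in $W$, and reconciling the $\fS_M$-minimum-length representative with the type-$B$ minimum-length representative in $W$. This is a standard $\sigma$-folding statement for Coxeter systems under an order-two diagram automorphism, and can be handled along the lines of the classical Weyl-group folding arguments in \cite[Ch.~4]{DDPW08}, or extracted directly from the set-up in \cite{BKLW}. Once this folding is in hand, assembling the three steps above produces the map $\fkm$ together with its inverse.
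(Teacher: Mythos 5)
Your proposal is correct and follows essentially the same route as the paper: start from the classical type $A$ bijection between triples $(\wla,\td,\wmu)$ and $\N$-matrices, cut down to the centrosymmetric matrices via the preceding lemma's $\sigma$-stability criterion, and then use the $\sigma$-folding of $\fS_{2r+1}$ onto $W=\fS_{2r+1}^\sigma$ to pass from the type $A$ double coset data $(\wla,\td,\wmu)$ to the type $B$ data $(\la,d,\mu)$. The paper's own proof is far more terse (two sentences, essentially ``take the matrix of $\fS_\wla \widetilde d\fS_\wmu$; apply the lemma''), and entirely suppresses the folding point that you single out as the main obstacle---namely that a $\sigma$-stable $\fS_\wla$-$\fS_\wmu$-double coset has a $\sigma$-fixed minimal-length representative and meets $W$ in a single $W_\la$-$W_\mu$-double coset---so your write-up makes explicit exactly what the paper is implicitly relying on, and is a reasonable and accurate account of the argument.
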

\begin{proof}The matrix $\fkm(\la,d,\mu)$ is the matrix associated with the double coset $\fS_{\widetilde\la}\widetilde d\fS_{\widetilde\mu}$.
The assertion follows from the lemma above.
\end{proof}

Recall the basis $\{e_A\mid A\in\Xi_{2n+1,2r+1}\}$ for $\mathcal E_{q,R}(n,r)$  introduced in \cite[\S2.3]{BKLW}.
\begin{theo} By specialising $v^2$ to $q=q1_R\in R$, there is an algebra isomorphism 
$$\xi: \mathcal E_{q,R}(n,r)\longrightarrow \sS^\jmath(n,r)_R:=\sS^\jmath(n,r)\otimes_\sZ R, \;e_A\longmapsto \xi^d_{\la,\mu},$$
where $\fkm(\la,d,\mu)=A$ and $\xi^d_{\la,\mu}$ is the $\sH(B_r)$-module homomorphism defined by
$$\xi^d_{\la,\mu}(x_\nu)=\delta_{\mu,\nu}\sum_{w\in W_\la dW_\mu}T_w.$$
\end{theo}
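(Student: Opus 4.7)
The plan is to prove the isomorphism by matching standard bases on both sides and then verifying that the multiplication rules agree under the specialisation $v^2\mapsto q$. First I would identify the standard bases. On the right, a Hom computation for $\sH(B_r)$-modules shows that the maps $\xi^d_{\la,\mu}$, indexed by triples $(\la,\mu,d)$ with $\la,\mu\in\Lambda(n+1,r)$ and $d\in\sD_{\la,\mu}$, form an $\sZ$-basis of $\sS^\jmath(n,r)$, hence an $R$-basis of $\sS^\jmath(n,r)_R$ after base change; this is the standard double-coset basis of a Hecke endomorphism algebra. On the left, the basis $\{e_A\mid A\in\Xi_{2n+1,2r+1}\}$ from \cite{BKLW} is defined using characteristic functions of double cosets $P_\la(q)\,g\,P_\mu(q)$ in $G(q)$. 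By the preceding corollary, the triples $(\la,d,\mu)$ are in bijection with $\Xi_{2n+1,2r+1}$ via $\fkm$, so $\xi$ is well defined and is an $R$-linear bijection.

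To show $\xi$ is an algebra homomorphism I would match the structure constants of the two multiplications. Both are governed by double-coset intersection combinatorics in the Weyl group $W=W(B_r)=\fS_{2r+1}^\sigma$: the product $e_A\cdot e_B$ in $\mathcal E_{q,R}(n,r)$ is a convolution of double-coset sums in $RG(q)$ whose coefficients count intersections of $P_\bullet(q)$-double cosets in $G(q)$, while the composition $\xi^{d_1}_{\la,\mu}\circ\xi^{d_2}_{\mu,\nu}$ expands by the standard formula for Hecke module homomorphisms into a polynomial in $v^2$ times $\sH(B_r)$-products of $T_w$'s. The equality of these two families of coefficients after setting $v^2=q$ is the content of Iwahori's theorem in type $B$: Bruhat cells in $\O_{2r+1}(\mathbb F_q)$ have the expected size $q^{\ell(w)}|B|$ and the type-$B_r$ Hecke relations hold at parameter $q$. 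A clean way to package this is to verify that $\xi$ sends the minimal non-trivial double-coset elements on the left to the corresponding generators $T_i$ (together with the idempotents $\xi^1_{\la,\la}$) on the right, with matching relations; multiplicativity then extends from generators to the whole algebra.

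The main obstacle is this structure-constant matching, which is essentially Iwahori's theorem for the finite orthogonal group $\O_{2r+1}(\mathbb F_q)$. Although morally well known, a careful verification requires controlling the Bruhat decomposition of $G(q)$ with respect to $W=\fS_{2r+1}^\sigma$ and checking that the double-coset counts coincide with the specialisations of the polynomial formulas in $\sH(B_r)$. In practice one either invokes a classical reference in the Iwahori--Curtis style, or transports the type-$A$ Dipper--James argument from $\GL_n(\mathbb F_q)$ to $\O_{2r+1}(\mathbb F_q)$ by using the $\sigma$-fixed-point structure to reduce type-$B$ double-coset counts in $G(q)$ to $\sigma$-stable double-coset counts in $\GL_{2r+1}(\mathbb F_q)$. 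Once the generators-and-relations match is verified, the $R$-linear bijection $\xi$ is automatically an algebra isomorphism, completing the proof.
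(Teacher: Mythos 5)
Your proposal is correct and follows exactly the route the paper has in mind: the paper omits the proof, remarking only that it is ``standard, extending Iwahori's original isomorphism'' and pointing to \cite[Thm.~13.15]{DDPW08} and the general finite-type result in \cite[Thm.~4.2]{LW}. Your sketch---matching double-coset bases via the bijection $\fkm$ and then comparing structure constants through Iwahori-style Bruhat/double-coset counts in $\O_{2r+1}(\F_q)$---is precisely the content of that standard argument.
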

We will identify the two basis elements  $e_A=\xi^d_{\la,\mu}$ in the sequel.
\begin{remark} The proof is omitted as it is standard, extending Iwahori's original isomorphism $\End_{RG(q)}(\text{ind}_{P_{(1^r)}(q)}^{G(q)}1_R)\cong \sH(B_r)_R$. See \cite[Thm. 13.15]{DDPW08}. 
It should be noted that a more general result for all finite types can be found in \cite[Thm.~4.2]{LW}.
\end{remark}
 We record the fact that, if $D=\diag(\wla)\in\Xi_{2n+1,2r+1}$ for some $\la\in\La(n+1,r)$ is diagonal, then
\begin{equation}\label{idempotent}
e_De_A=\delta_{\wla,\ro(A)}e_A,\quad e_Ae_D=\delta_{\co(A),\wla}e_A
\end{equation} for all $A=(a_{i,j})\in\Xi_{2n+1,2r+1}$, where
\begin{equation*}
\aligned
\ro(A)&:=\big(\sum_{j}a_{1,j},\,\sum_{j}a_{2,j},\dots,\sum_{j}a_{2n+1,j}\big)\\
\co(A)&:=\big(\sum_{i}a_{i,1},\,\sum_{i}a_{i,2},\dots,\sum_{i}a_{i,2n+1}\big).
\endaligned
\end{equation*}

We end this section with Bao-Wang's Schur duality. We need to interpret the $\sH(B_r)$-module
$\sT(n,r)$ in terms of the tensor space $\Om^{\otimes r}$; see \cite{DPS}. Let 
\begin{equation}\label{Inr}
I(2n+1,r)=\{\bsi=(i_1,\ldots,i_r)\mid i_j\in[1,2n+1],\;\forall j\}.
\end{equation}
Then $\om_{\bsi}:=\om_{i_1}\otimes\cdots\otimes\om_{i_r}$, $\bsi\in \Inr$, form a basis for $\Om^{\otimes r}$.

Recall the action of $\sH(\fS_r)$ on $\Om^{\otimes r}$ defined in \cite[(14.6.4)]{DDPW08}: for $1\leq j< r$,
\begin{equation}\label{Sr action}\om_\bsi T_j=\begin{cases}
v\om_{\bsi s_j},&\text{ if }i_j<i_{j+1};\\
v^2\om_\bsi,&\text{ if }i_j=i_{i+1};\\
(v^2-1)\om_\bsi+v\om_{\bsi s_j},&\text{ if }i_j>i_{j+1}.\\
\end{cases}
\end{equation}
We extend the action to $\sH(B_r)$ by setting
\begin{equation}\label{Br action}
\om_\bsi T_r=\begin{cases}
v\om_{\bsi s_r},&\text{ if }i_j<r+1;\\
v^2\om_\bsi,&\text{ if }i_j=r+1;\\
(v^2-1)\om_\bsi+v\om_{\bsi s_r},&\text{ if }i_j>r+1.\\
\end{cases}
\end{equation}
Here $s_j=(j,j+1)$ and $\bsi s_j$ is the place permutation:
$$(i_1,\ldots,i_{r-1},i_r)s_j=\begin{cases}(i_1,\ldots,i_{j-1},i_{j+1},i_j,i_{j+2},\ldots,i_r),&\text{ if }j<r;\\
(i_1,\ldots,i_{j-1},i_{j},i_{j+1},\ldots,i_{r-1},i_{r+2}),&\text{ if }j=r,\end{cases}$$
where $i_{r+2}=N+1-i_{r-1}.$ 

As usual, we will call a surjective homomorphism an {\it epimorphism}. Let $\bsH(\fS_r)=\sH(\fS_r)_{\Q(v)}$, etc.
\begin{theo}\label{Bao-Wang}(1) 
The $\bfU(\mathfrak{gl}_{2n+1})$ action on $\Om^{\otimes r}$ commutes with the action of $\bsH(\fS_r)$ and the bimodule structure induces epimorphisms (cf. \eqref{rho})
$$\rho_r:\bfU(\mathfrak{gl}_{2n+1})\lra \End_{\bsH(\fS_r)}(\Om^{\otimes r}),\quad \rho_r^\vee:\bsH(\fS_r)\lra
\End_{\bfU(\mathfrak{gl}_{2n+1})}(\Om^{\otimes r}).$$


(2){\rm \cite[Lem.~5.3.8]{DPS}}There is an $\bsH(B_r)$-module isomorphism $\sT(n,r)_{\Q(v)}\cong \Om^{\otimes r}$. Hence,
$$\End_{\bsH(B_r)}(\Om^{\otimes r})\cong\SnrQ.$$

(3){\rm\cite[Thm.6.27]{BW}} The actions of $\bfUjn$ and $\bsH(B_r)$ commute and satisfy the double centraliser property as stated in (1). In particular, the algebra homomorphism $\rho_r$
restricts to an algebra epimorphism
\begin{equation}
\label{rhojr}\rho_r^\jmath=\rho_r\circ\iota:\bfUjn\lra\End_{\bsH(B_r)}(\Om^{\otimes r}).
\end{equation}
\end{theo}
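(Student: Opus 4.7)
The strategy is to handle the three parts sequentially, establishing commutativity and surjectivity in type $A$ first, then passing to type $B$ via the Hecke-algebra extension and Bao-Wang's coideal embedding.

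For (1), commutativity of $\rho_r$ with the $\bsH(\fS_r)$-action is checked directly on generators. Using \eqref{Dr-1}, an easy induction on $r$ reduces the verification to the case $r=2$, where one examines how $E_h$, $F_h$, $K_j$ act on the three subcases $i_1<i_2$, $i_1=i_2$, $i_1>i_2$ of \eqref{Sr action}. Surjectivity of both $\rho_r$ and $\rho_r^\vee$ is the classical Jimbo-type double centraliser theorem (see \cite[Chap.~14]{DDPW08}); it can be proved by matching $\im(\rho_r)$ on a basis with the type $A$ $q$-Schur algebra on a basis of coset-sum operators and invoking dimension comparison.

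For (2), I would construct the $\bsH(B_r)$-isomorphism $\sT(n,r)_{\Q(v)}\to \Om^{\otimes r}$ explicitly. For each $\la\in\La(n+1,r)$, fix the canonical index $\bsi_\la\in\Inr$ listing each entry $k\in[1,2n+1]$ precisely $\wla_k$ times in weakly increasing order, so that $\wla=\ro$ in an appropriate sense. Sending $x_\la\mapsto\om_{\bsi_\la}$ and extending $\bsH(B_r)$-linearly yields a candidate map; its well-definedness reduces to verifying $\om_{\bsi_\la} T_w = v^{\ell(w)}\om_{\bsi_\la}$ for $w\in W_\la$, which follows from the second clause of \eqref{Sr action} and \eqref{Br action} combined with the observation that $W_\la$ is exactly the stabiliser of $\bsi_\la$ under the signed place-permutation action. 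Bijectivity follows from the orbit decomposition of $\Inr$ under the $B_r$-action encoded by \eqref{Sr action}--\eqref{Br action}, where each orbit admits a unique distinguished representative of the form $\bsi_\la \cdot d$ with $d\in\sD_{\la,(1^r)}$. Applying $\End_{\bsH(B_r)}(-)$ transports the module isomorphism to $\SnrQ\cong \End_{\bsH(B_r)}(\Om^{\otimes r})$.

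For (3), that $\iota(\bfUjn)$ commutes with the full $\bsH(B_r)$-action proceeds in two steps: commutativity with the subalgebra $\bsH(\fS_r)$ is inherited from (1) because $\iota(\bfUjn)\subset \bfU(\mathfrak{gl}_{2n+1})$; commutativity with $T_r$ must be verified separately on the generators $e_i$, $f_i$, $d_a^{\pm 1}$ using Lemma~\ref{iota} and the action formula \eqref{Br action}. The non-trivial case is $e_n$ and $f_n$, where the $\jmath$-twist $F_i+\wK_i^{-1}E_{2n+1-i}$ (resp. $E_i\wK_{2n+1-i}+F_{2n+1-i}$) is engineered precisely so that the two summands exchange their contributions across the position $r+1$ and cancel the discrepancy between $s_r$ and $s_{r+1}$ in $\fS_{2r+1}$. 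Existence of the epimorphism $\rho_r^\jmath=\rho_r\circ\iota$ onto its image follows; the surjectivity claim $\im(\rho_r^\jmath)=\End_{\bsH(B_r)}(\Om^{\otimes r})$ is the substantive content of \cite[Thm.~6.27]{BW} and I would cite that reference directly.

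The main obstacle is undoubtedly the surjectivity in (3). A self-contained proof would require identifying a generating set for $\SnrQ$ and lifting each generator through $\rho_r^\jmath$---essentially the program carried out in \cite{BW} using canonical bases of quantum symmetric pairs. Since the cited references already handle this, the role of the theorem here is to record the duality in the exact form that subsequent sections will use, most importantly the identification in (2) which embeds $\SnrQ$ inside the tensor-space model on which the BLM-type multiplication formulas of Section~4 operate.
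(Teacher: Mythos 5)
The paper itself does not prove this theorem: part~(1) is the classical Jimbo-type duality (\cite[Chap.~14]{DDPW08}), and parts~(2), (3) are cited to \cite{DPS} and \cite{BW} respectively, so there is no in-paper argument to compare against. Your proposal is a reconstruction of the standard proofs, and the overall plan is sound: reducing commutativity in (1) to adjacent tensor factors via \eqref{Dr-1}; constructing the $\bsH(B_r)$-isomorphism in (2) by sending cyclic generators $x_\la$ to a distinguished tensor and invoking Frobenius reciprocity plus orbit counting; inheriting $\bsH(\fS_r)$-commutativity in (3) from (1), checking $T_r$ separately on $\iota$ of the generators, and citing \cite{BW} for surjectivity.

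There is, however, one concrete slip in your construction for (2). The distinguished index $\bsi_\la$ cannot list each $k\in[1,2n+1]$ precisely $\wla_k$ times, because $\sum_k\wla_k=2r+1$ while elements of $I(2n+1,r)$ have length $r$; no such tuple exists. What you want is the weakly increasing $r$-tuple $\bsi_\la$ in which $k$ appears $\la_k$ times for each $k\in[1,n+1]$. The connection to $\wla$ is $\ro(A_{\bsi_\la})=\wla$ with $A_\bsi$ as defined later in \eqref{Ai}, not the content of $\bsi_\la$ itself. Also, with the paper's normalisation $T_i^2=(v^2-1)T_i+v^2$, the required invariance is $\om_{\bsi_\la}T_w=v^{2\ell(w)}\om_{\bsi_\la}$, not $v^{\ell(w)}$; once $\bsi_\la$ is fixed as above, this follows from the middle clauses of \eqref{Sr action} and \eqref{Br action}, since $s_j\in W_\la$ forces $i_j=i_{j+1}$ for $j<r$, and $s_r\in W_\la$ (i.e.\ $\la_{n+1}\geq1$) forces $i_r=n+1$ (note the threshold in \eqref{Br action} should read $n+1$, not $r+1$). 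With these fixes your isomorphism agrees, after composing with $\sT(n,r)_{\Q(v)}\cong\bsSjnr e_\emptyset$, with the map $\eta_r:\om_\bsi\mapsto[A_\bsi]$ used in Section~7.
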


\begin{remarks}\label{DanN}
(1) The $\Q(v)$-algebra $\sS(2n+1,r)_{\Q(v)}:=\End_{\sH(\fS_r)_{\Q(v)}}(\Om^{\otimes r})$ is known as the $q$-Schur algebra ($q=v^2$). By identifying $\SnrQ$ with $\End_{\sH(B_r)_{\Q(v)}}(\Om^{\otimes r})$ under the isomorphism in (2), we may regard $\SnrQ$ as a subalgebra of $\sS(2n+1,r)_{\Q(v)}$. Clearly, these relations hold at the integral level.

(2) The epimorphisms $\rho_r^\jmath$ induce an algebra homomorphism
$$\rho^\jmath:\bfUjn\lra\prod_{r\geq0}\End_{\sH(B_r)_{\Q(v)}}(\Om^{\otimes r}), \; u\longmapsto (\rho_r^\jmath(u))_{\r\geq0}.$$
One of the aims of this paper is to determine a basis for the image $\text{im}(\rho^\jmath)$ and to show that $\rho^\jmath$ induces an isomorphism $\bfUjn\cong \text{im}(\rho^\jmath)$.

(3) Lai, Nakano and Xiang considered the representation theory of $\SnrZ_k$ over a field $k$. In particular they realized the aforementioned algebra as the dual of the $r$th homogeneous component of the quotient of the coordinate algebra of the quantum matrix space by a right ideal that is also a coideal. This shows there is a natural polynomial representation theory (see \cite[Section 2.4]{LNX}). 

 Moreover, under a certain invertibility condition (i.e., the ``semisimple bottom'' condition in the sense of \cite{DR}), the structure and representations of these algebras including quasi-hereditariness and cellularity were investigated \cite[Sections 5-6]{LNX}. In turn, they obtained a concrete realization for the category $\sO$ of rational Cherednik algebras of type $B$ together with the Knizhnik--Zamolodchikov functor in terms of the module category of $\SnrZ_k$ and its corresponding Schur functor (see \cite[Section 8]{LNX}). 
\end{remarks}


\section{Some multiplication formulas}
We now derive some multiplication formulas and their associated stabilisation property in the $q$-Schur algebra of type $B$. This work is built on the formulas in \cite[Lem.~3.2]{BKLW}.

For $i,j\in[1,2n+1]$, let $E_{i,j}$ be the standard matrix units in $\text{Mat}_{2n+1}(\mathbb{N})$. Let
\begin{equation*}
E^{\theta}_{i,j}=E_{i,j}+E_{2n+2-i,2n+2-j}=E^\theta_{2n+2-i,2n+2-j}.
\end{equation*}
Note that $E^\theta_{n+1,n+1}=2E_{n+1,n+1}$. Let
\begin{eqnarray}\notag
\epsilon^{\theta}_{i,j}=
\begin{cases}
2 &\mbox{ if $i=j=n+1,$}\\[8pt]
1 &\,\,\mbox{otherwise},
\end{cases}\quad\text{and}\quad \bfe^\theta_i=\ro(E^\theta_{i,i}).
\end{eqnarray}
Then $\epsilon^{\theta}_{i,j}$ is the $(i,j)$-entry of $E^{\theta}_{i,j}$ and $\bfe^\theta_i=\bfe_i+\bfe_{2n+2-i}$, where $\mathbf e_i=(0,\ldots,0,\underset{(i)}1,0,\ldots0)\in\Z^{2n+1}$ form the standard basis for $\Z^{2n+1}.$

Recall the dimension $d(A)$ of the orbit $\sO_A$ and the dimension $r(A)$ of the image of $\sO_A$ under the first projection
(see \cite[(3.16)]{BKLW}).$^2$\footnote{${}^2$We thank Yiqiang Li for sending us this simplified version.} 
\begin{equation*}
d(A)-r(A)=\frac{1}{2}\big(\sum_{i\geq k,j<l}a_{ij}a_{kl}-\sum_{j<n+1\leq i}a_{ij}\big).\end{equation*}
We normalise the $\ZZ$-basis $\big\{e_{A}\mid A\in \Xi_{2n+1,2r+1}\big\}$ for $\sS^{\jmath}(n,r)$ by setting 
\begin{equation*}
[A]=v^{-d(A)+r(A)}e_{A}.
\end{equation*}
The following multiplication formulas are special cases of \cite[Thm. 3.7]{BKLW} by taking $R=1$. We can also derive them directly by \cite[Lem. 3.2]{BKLW}. For notational clarity, we extend the usual Kronecker delta $\delta_{i,j}$ to define
\begin{equation}\notag
\delta^\leq_{i,j}=\begin{cases}
1 &\mbox{ if $i\leq j,$}\\
0 &\,\,\mbox{if $i>j$}.
\end{cases}
\end{equation}
Let, for $A \in \Xi_{2n+1,2r+1}$ and $h\,\in\,[1,n]$,
\begin{equation}\label{beta}
\aligned
\b_p(A,h)&=\sum_{j\geq p}a_{h,j}-\sum_{j> p }a_{h+1,j}+\delta_{h,n}\delta^\leq_{p,n},\\
\b'_p(A,h)&=\sum_{j\leq p}a_{h+1,j}-\sum_{j< p}a_{h,j}.
\endaligned
\end{equation}
Recall $\wla$ in \eqref{wla} for $\la\in\La(n+1,r)$. We often use $\wla$ to denote the diagonal matrix $\diag(\wla)$ for simplicity. Thus, for any $A\in\Xinl$, $A+\wla$ really mean $A+\diag(\wla)$.
\begin{lemm}\label{key lem}
Suppose that $h\,\in\,[1,n]$, $\la\in\La(n+1,r-1)$, and $A \in \Xi_{2n+1,2r+1}$. The following multiplication formulas holds in $\SnrZ$:
\begin{itemize}
\item[(a)] 
$[E_{h,h+1}^{\theta}+\wla]\cdot[A]=\delta_{\bfe_{h+1}^\theta+\wla,\ro(A)}\displaystyle\sum_{p\in[1,2n+1]\atop a_{h+1,p}\geq \epsilon_{h+1,p}^{\theta}}v^{\beta_p(A,h)}\overline{[\![a_{h,p}+1]\!]}[A+E^{\theta}_{h,p}-E^{\theta}_{h+1,p}].$

\item[(b)] 
$[E_{h+1,h}^{\theta}+\wla]\cdot[A]=\delta_{\bfe_{h}^\theta+\wla,\ro(A)}\displaystyle\sum_{p\in[1,2n+1]\atop a_{h,p}\geq 1}v^{\b'_p(A,h)}\overline{[\![a_{h+1,p}+1]\!]}[A-E^{\theta}_{h,p}+E^{\theta}_{h+1,p}].$
\end{itemize}
\end{lemm}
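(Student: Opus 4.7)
The plan is to deduce both (a) and (b) as direct specializations of the general multiplication formulas in \cite[Lem.~3.2]{BKLW}. First I observe that $E^\theta_{h,h+1}+\wla$ has column sum vector $\bfe^\theta_{h+1}+\wla$ and $E^\theta_{h+1,h}+\wla$ has column sum $\bfe^\theta_h+\wla$. Under the identification $[A]\leftrightarrow \xi^d_{\la,\mu}$ of basis elements with $\sH(B_r)$-homomorphisms, the composition of two such maps is zero whenever the column sum of the left factor disagrees with the row sum of the right factor; via \eqref{idempotent} this accounts for precisely the Kronecker delta factors $\delta_{\bfe^\theta_{h+1}+\wla,\ro(A)}$ and $\delta_{\bfe^\theta_h+\wla,\ro(A)}$ appearing in (a) and (b) respectively.

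Once the delta factor is nonzero, I would invoke \cite[Lem.~3.2]{BKLW}: the product expands as a sum over column indices $p$, where each summand corresponds to transferring one unit of mass between the $h$-th and $(h+1)$-st rows at column $p$, with a simultaneous $\theta$-symmetric transfer at the mirrored column $2n+2-p$. The constraints $a_{h+1,p}\ge\epsilon^\theta_{h+1,p}$ in (a) and $a_{h,p}\ge 1$ in (b) are exactly the conditions ensuring enough mass is available in the source row; the value $\epsilon^\theta_{h+1,p}=2$ at $(h,p)=(n,n+1)$ reflects the entry doubling $E^\theta_{n+1,n+1}=2E_{n+1,n+1}$. The $q$-integer coefficient $\overline{[\![a_{h,p}+1]\!]}$ (resp.\ $\overline{[\![a_{h+1,p}+1]\!]}$) emerges from the $q$-coset count performing the mass transfer and collapses to a single bracket because only one unit is displaced per summand.

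The remaining task is to pass from the unnormalized $e_A$-basis of \cite[Lem.~3.2]{BKLW} to the normalized basis $[A]=v^{-d(A)+r(A)}e_A$, and this is where the main obstacle lies: bookkeeping the exponent $\b_p(A,h)$. Using the simplified formula for $d(A)-r(A)$ recorded just before the statement, I would compute the difference
\[
(d-r)(A\pm E^\theta_{h,p}\mp E^\theta_{h+1,p})-(d-r)(A)
\]
explicitly and combine it with the intrinsic $v$-power of \cite[Lem.~3.2]{BKLW}. The $\theta$-symmetry doubles most contributions, while the fixed central row/column $i=n+1$ is handled asymmetrically; this asymmetry is precisely what the correction term $\delta_{h,n}\delta^\le_{p,n}$ in the definition of $\b_p(A,h)$ is designed to absorb. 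Once this boundary case $h=n$ is verified carefully, the remaining cancellations among the $\theta$-paired terms are routine and the identities follow.
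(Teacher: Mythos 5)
Your sketch follows the paper's second suggested route (direct derivation from \cite[Lem.~3.2]{BKLW}): the delta factor from the idempotent relations \eqref{idempotent}, the admissibility constraints, the role of $\epsilon^\theta_{h+1,p}=2$ at the central entry $(n+1,n+1)$, and the boundary correction $\delta_{h,n}\delta^{\leq}_{p,n}$ absorbed into $\b_p(A,h)$ are all correctly identified, and the paper itself supplies no more detail than the citation. One small omission in the normalization bookkeeping you flag as the main obstacle: besides $(d-r)(A\pm E^\theta_{h,p}\mp E^\theta_{h+1,p})-(d-r)(A)$ and the intrinsic $v$-power from BKLW's formula, the left factor also contributes $(d-r)(E^\theta_{h,h+1}+\wla)$ (resp.\ $(d-r)(E^\theta_{h+1,h}+\wla)$), which you do not mention; the paper's primary route --- specializing $R=1$ in \cite[Thm.~3.7]{BKLW}, which is already stated in the normalized $[A]$-basis --- sidesteps this bookkeeping entirely.
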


\medskip
We now extend these formulas to a certain spanning set for $\sS^\jmath(n,r)$. Recall the notation $\Xi_{2n+1}^{0\diag}$ in \eqref{Xi}.

For $A\in\Xi_{2n+1}^{0\diag}$ and ${\bf j}=(j_1,j_2,\dots,j_N)\in \mathbb{Z}^{2n+1}$, define
\begin{equation}\label{Ajr}
A({\bf j},r)=\begin{cases}\displaystyle
\sum_{\la\in\Lambda(n+1,r-\frac{|A|}2)}v^{\widetilde\la\centerdot\bfj}[A+{\widetilde\la}],&\text{ if }|A|\leq 2r,\\
0,\qquad&\text{ if }|A|>2r,\end{cases}
\end{equation}
where $\wla\centerdot {\bf j}=\sum_{i=1}^{2n+1}\wla_i j_i$ with $\wla$ defined as in \eqref{wla}.
Note that 
$$\Big\{\wla\mid\la\in\La(n+1,r-\frac12{|A|})\Big\}=\{\mu\in \mathbb{N}^{2n+1}\mid A+{\mu}\in \Xi_{2n+1,2r+1}\}.$$
In particular, if $O$ denotes the zero matrix, $\mathbf e_i\in\Z^{2n+1}$ as above,
and ${\bf 0}=(0,\dots,0)\in\Z^{2n+1}$, we have $E^{\theta}_{h,h+1}({\bf 0},r)=\sum_{{\la}\in \Lambda(n+1,r-1)}[E^{\theta}_{h,h+1}+{\wla}]$ and
\begin{equation}\label{OE}
\aligned
&O(\bfe_i,r)=O(\bfe_{2n+2-i},r)=\begin{cases}\displaystyle\sum_{\la\in\La(n+1,r)}v^{\la_i}[\wla],&\text{ if }1\leq i\leq n;\\
\displaystyle\sum_{\la\in\La(n+1,r)}v^{2\la_{n+1}+1}[\wla],&\text{ if }i=n+1.\end{cases}
\endaligned
\end{equation}
For $1\leq h\leq n$, we put $\al_h=\bfe_{h}-\bfe_{h+1}$ and $\al^-_h=-\bfe_h-\bfe_{h+1}$. The following multiplication formulas are the type $B$ counterpart of \cite[Lem.~5.3]{BLM}.
\begin{theo}\label{th1}Maintain the notations introduced above. 
 For $N=2n+1$,  ${\bf j}=(j_1,j_2,\dots,j_{N})\in \mathbb{Z}^N$, $h\in[1,n]$, and $A=(a_{i,j})\in\Xizero$, the following multiplication formulas hold in $\sS^{\jmath}(n,r)$ for all $r\geq \frac{|A|}2$:
\begin{itemize}
\item[] \vspace{-3ex}
$$(1)\;\;{ O}({\bf j},r)A({\bf j'},r)=v^{\ro(A)\centerdot{\bf j}}A({\bf {j+j'}},r),\quad A({\bf j'},r){O}({\bf j},r)=v^{\co(A)\centerdot\bfj}A({\bf {j+j'}},r);\qquad\qquad\quad\;\;\quad$$
\item[]\vspace{-3ex}
\begin{equation*}
\aligned
(2)\;\;E^{\theta}_{h,h+1}({\bf 0},r)&\cdot A({\bf j},r)=\sum_{\substack{1\leq p<h\\ a_{h+1,p}\geq 1}}v^{\b_p (A,h)}\overline{[\![a_{h,p}+1]\!]}
(A+E^{\theta}_{h,p}-E^{\theta}_{h+1,p})({\bf j}+\alpha_{h},r)\\
&+\varepsilon \frac{v^{\b_h(A,h)-j_h-j_{N+1-h}-1}}{1-v^{-2}}\Big((A-E^{\theta}_{h+1,h})({\bf j}+\al _h,r)-(A-E^{\theta}_{h+1,h})({\bf j}+\al^-_h,r)\Big)\\
&+ v^{\b_{h+1}(A,h)+j_{h+1}+j_{N-h}}\overline{[\![a_{h,h+1}+1]\!]}(A+E^{\theta}_{h,h+1})({\bf j},r)\\
&+\sum_{\substack{h+1<p\leq N\\ a_{h+1,p}\geq 1}}v^{\b_p(A,h)}\overline{[\![a_{h,p}+1]\!]}
(A+E^{\theta}_{h,p}-E^{\theta}_{h+1,p})({\bf j},r)\\
\endaligned
\end{equation*}
where $\varepsilon=\delta^\leq_{1,a_{h+1,h}}$.
\item[]\vspace{-3ex}
\begin{equation*}
\aligned
(3)\, E^{\theta}_{h+1,h}(&{\bf 0},r)\cdot A({\bf j},r)=
\sum_{\substack{1\leq p<h\\ a_{h,p}\geq 1}}v^{\b'_p(A,h)}\overline{[\![a_{h+1,p}+1]\!]}
(A-E^{\theta}_{h,p}+E^{\theta}_{h+1,p})({\bf j},r)\\
&+ v^{\b'_h(A,h)+j_{h}+j_{N+1-h}}\overline{[\![a_{h+1,h}+1]\!]}(A+E^{\theta}_{h+1,h})({\bf j},r)\\
&+\varepsilon'\frac{v^{\b'_{h+1}(A,h)-j_{h+1}-j_{N-h}-1}}{1-v^{-2}}\bigg(\frac{(A-E^{\theta}_{h,h+1})({\bf j}-\al _h,r)}{v^{\delta_{h,n}}}-\frac{(A-E^{\theta}_{h,h+1})({\bf j}+\al^-_h,r)}{v^{-\delta_{h,n}}}\bigg)\\
&+\sum_{\substack{h+1<p\leq N\\ a_{h,p}\geq 1}}v^{\b'_p(A,h)}\overline{[\![a_{h+1,p}+1]\!]}
(A-E^{\theta}_{h,p}+E^{\theta}_{h+1,p})({\bf j}-\al_h,r),
\endaligned
\end{equation*}
where $\varepsilon'=\delta_{1,a_{h,h+1}}^\leq$.
\end{itemize}
\end{theo}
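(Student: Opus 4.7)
The plan is to deduce both (2) and (3) from Lemma \ref{key lem} by expanding each factor and tracking the combinatorics carefully. Part (1) is immediate from \eqref{idempotent}: expanding $O(\bfj,r)=\sum_\la v^{\wla\cdot\bfj}[\wla]$ and $A(\bfj',r)=\sum_\mu v^{\wmu\cdot\bfj'}[A+\wmu]$, one uses $[\wla][A+\wmu]=\delta_{\wla,\ro(A)+\wmu}[A+\wmu]$; the $\la$-sum collapses and the exponent $v^{(\ro(A)+\wmu)\cdot\bfj}$ separates as $v^{\ro(A)\cdot\bfj}\cdot v^{\wmu\cdot\bfj}$. The right-multiplication identity is analogous.

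For (2), the strategy is to expand
$$E^\theta_{h,h+1}(\bfl,r)\cdot A(\bfj,r)=\sum_{\nu,\mu}v^{\wmu\cdot\bfj}\,[E^\theta_{h,h+1}+\widetilde\nu]\cdot[A+\wmu]$$
and apply Lemma \ref{key lem}(a) with the ``$A$'' there replaced by $A+\wmu$. The Kronecker delta $\delta_{\bfe^\theta_{h+1}+\widetilde\nu,\,\ro(A+\wmu)}$ forces $\widetilde\nu=\ro(A)+\wmu-\bfe^\theta_{h+1}$, collapsing the $\nu$-sum. What remains is indexed by $\mu$ and $p\in[1,N]$, and splits into four cases according to the position of $p$ relative to $\{h,h+1\}$. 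In each case I compute the explicit $\wmu$-dependence of $\b_p(A+\wmu,h)$ and of $\overline{[\![(A+\wmu)_{h,p}+1]\!]}$, track the diagonal of $A+\wmu+E^\theta_{h,p}-E^\theta_{h+1,p}$, and change variables in $\mu$ when necessary so that the partial sum reassembles as $A''(\bfj',r)$ for an appropriate $A''\in\Xizero$ and $\bfj'\in\mathbb Z^N$.

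Three of the four cases are essentially routine. For $p<h$, direct computation yields $\b_p(A+\wmu,h)=\b_p(A,h)+\wmu\cdot\al_h$, producing the shift $\bfj\mapsto\bfj+\al_h$; for $p>h+1$, all $\wmu$-contributions to $\b_p$ vanish, so no shift occurs; for $p=h+1$, the substitution $\mu'=\mu-\bfe_{h+1}$ absorbs the diagonal change, and the relation $\wmu=\widetilde{\mu'}+\bfe_{h+1}+\bfe_{N-h}$ produces the factor $v^{j_{h+1}+j_{N-h}}$. A uniform treatment for $h\le n-1$ and $h=n$ works in each case because the convention $\widetilde\la_{n+1}=2\la_{n+1}+1$ makes $\wmu\cdot\al_n=\mu_n-2\mu_{n+1}-1$, automatically absorbing the otherwise-troublesome $-1$.

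The main obstacle is the case $p=h$ (which requires $a_{h+1,h}\ge 1$, whence the factor $\varepsilon$). Here the matrix becomes $(A-E^\theta_{h+1,h})+\widetilde{\mu'}$ with $\mu'=\mu+\bfe_h$, and the overline factor $\overline{[\![\mu'_h]\!]}=(1-v^{-2\mu'_h})/(1-v^{-2})$ depends on $\mu'$ in a way that cannot be absorbed into a shift of $\bfj$. The crux is the algebraic identity
$$v^{\widetilde{\mu'}\cdot(\bfj+\al_h)}\,\overline{[\![\mu'_h]\!]}=\frac{v^{\widetilde{\mu'}\cdot(\bfj+\al_h)}-v^{\widetilde{\mu'}\cdot(\bfj+\al^-_h)}}{1-v^{-2}},$$
which rests on the elementary observation $\al_h-2\bfe_h=\al^-_h$ together with $v^{-2\mu'_h}=v^{\widetilde{\mu'}\cdot(-2\bfe_h)}$. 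Summing over $\mu'$ yields the difference $A''(\bfj+\al_h,r)-A''(\bfj+\al^-_h,r)$ times $1/(1-v^{-2})$; combined with the substitution factor $v^{-j_h-j_{N+1-h}}$ and the $-1$ arising from $\b_h(A+\wmu,h)$ after the change of variable, this produces the prefactor $v^{\b_h(A,h)-j_h-j_{N+1-h}-1}$. Part (3) is entirely parallel using Lemma \ref{key lem}(b); the one new feature is the $p=h+1$, $h=n$ sub-case, where the overline becomes $\overline{[\![2\mu'_{n+1}]\!]}$ rather than $\overline{[\![\mu'_{h+1}]\!]}$, and tracking the resulting arithmetic produces the $v^{\pm\delta_{h,n}}$ factors in the displayed formula.
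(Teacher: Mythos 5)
Your proposal is correct and follows essentially the same route as the paper's own proof: both expand $O(\bfj,r)$ and the $(\bfl,r)$-factors by definition, invoke Lemma~\ref{key lem} with $A$ replaced by $A+\wmu$, collapse the first sum via the Kronecker delta, split the $p$-sum into the four regimes $p<h$, $p=h$, $p=h+1$, $p>h+1$, and handle the $p=h$ case by the same algebraic manoeuvre — your identity $v^{\widetilde{\mu'}\cdot(\bfj+\al_h)}\,\overline{[\![\mu'_h]\!]}=\big(v^{\widetilde{\mu'}\cdot(\bfj+\al_h)}-v^{\widetilde{\mu'}\cdot(\bfj+\al^-_h)}\big)/(1-v^{-2})$ is precisely the paper's observation that the difference vanishes when $\la_h=0$, reinterpreted so the change-of-index $\mu\mapsto\mu'=\mu+\bfe_h$ extends over all of $\Lambda(n+1,r-\frac{|A|}2+1)$. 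The paper omits the details of (3); your remark about the $\overline{[\![2\mu'_{n+1}]\!]}$ vs.\ $\overline{[\![\mu'_{h+1}]\!]}$ discrepancy at $h=n$, which is what produces the $v^{\pm\delta_{h,n}}$ factors, correctly fills that gap.
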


\begin{proof} Recall the map in \eqref{wla}. For $\la\in\La(n+1,r),\mu\in\La(n+1,r-\frac{|A|}2)$ with $A+\wmu\in\Xi_{2n+1,2r+1}$, \eqref{idempotent} implies
$[\wla][A+\wmu]=[A+\wmu]\iff \wla=\ro(A)+\wmu.$
Thus,
\begin{eqnarray*}
{ O}({\bf j},r)A({\bf j'},r)&=&\sum_{\la\in\Lambda(n+1,r)}\ \sum_{\mu\in \Lambda(n+1,r-\frac{|A|}2)}v^{\wla\centerdot{\bf j}+\wmu\centerdot{\bf j'}}[\wla][A+\wmu]\\
&=&v^{\ro(A)\cdot{\bf j}}\sum_{\mu\in \Lambda(n+1,r-\frac{|A|}2)}v^{\wmu\centerdot({\bf j}+{\bf j'})}[A+\wmu]\\
&=&v^{\ro(A)\cdot{\bf j}}\ A({\bf j}+{\bf j'},r).
\end{eqnarray*}
The proof for the second formula in (1) is similar. 

We now prove (2). By definition, we have,
\begin{eqnarray*}
E^{\theta}_{h,h+1}({\bf 0},r).A({\bf j},r)&=&\sum_{\la\in \Lambda(n+1,r-1)}\sum_{\mu \in \Lambda(n+1,r-\frac{|A|}2)}v^{\wmu\centerdot\bfj}[E^{\theta}_{h,h+1}+\widetilde{\l}][A+\wmu]\\
&=&\sum_{\mu\in \Lambda(n+1,r-\frac{|A|}2)}v^{\wmu\centerdot\bfj} [E^{\theta}_{h,h+1}+\ro(A)+\wmu-\co(E^{\theta}_{h,h+1})][A+\wmu]
\end{eqnarray*}
Let $A+\wmu=(a^{\mu}_{i,j})$. Then $a^{\mu}_{i,i}=a_{i,i}+\mu_{i}$ and $a^{\mu}_{i,j}=a_{i,j}$ for $i\neq j$.
For the number $\beta_p(A,h)$ in \eqref{beta}, we have
\begin{eqnarray}\label{A+mu}
\b_p(A+\wmu,h)=
\begin{cases}
\b_p(A,h)+\wmu_{h}-\wmu_{h+1} &\mbox{ if $p\leq h,$}\\[8pt]
\b_p(A,h) &\ \,\mbox{if $p\geq h+1$}.
\end{cases}
\end{eqnarray}
By Proposition \ref{key lem}(1) and noting $\co(E^{\theta}_{h,h+1})=\bfe_{h+1}+\bfe_{N-h}=\bfe^\theta_{h+1}$, we obtain that
\begin{equation*}
\aligned{}
[E^{\theta}_{h,h+1}+\ro(A)+&\wmu-\co(E^{\theta}_{h,h+1})][A+\wmu]
=[E^{\theta}_{h,h+1}+\ro(A)+\wmu-\bfe_{h+1}^{\theta})][A+\wmu]\\
&=\sum_{\substack{p\in[1,N]\\ a^{\mu}_{h+1,p}\geq \epsilon^{\theta}_{h+1,p}}}v^{\b_p(A+\wmu,h)}\overline{[\![a^{\mu}_{h,p}+1]\!]}[A+\wmu+E^{\theta}_{h,p}-E^{\theta}_{h+1,p}].
\endaligned
\end{equation*}
Hence, by \eqref{A+mu},
\begin{eqnarray*}
E^{\theta}_{h,h+1}({\bf 0},r).A({\bf j},r)
&=&\!\!\sum_{\mu\in \Lambda(n+1,r-\frac{|A|}2)}v^{\b_p(A+\wmu,h)+\wmu\centerdot\bfj}\sum_{\substack{p\in[1,N]\\a^{\mu}_{h+1,p}\geq \epsilon^{\theta}_{h+1,p} }}\overline{[\![a^{\mu}_{h,p}+1]\!]}[A+\wmu+E^{\theta}_{h,p}-E^{\theta}_{h+1,p}]\\
&=&\sum_{\substack{1\leq p<h\\a_{h+1,p}\geq 1}}v^{\b_p(A,h)}\overline{[\![a_{h,p}+1]\!]}\sum_{\mu}v^{\wmu\centerdot({\bf j}+\al_h)}[A+\wmu+E^{\theta}_{h,p}-E^{\theta}_{h+1,p}]\\
&+&\varepsilon\sum_{\mu}v^{\b_h(A+\wmu,h)+\wmu\centerdot\bfj}\overline{[\![a^{\mu}_{h,h}+1]\!]}[A+\wmu+E^{\theta}_{h,h}-E^{\theta}_{h+1,h}]\\
&+&\sum_{\mu:\wmu_{h+1}\geq \epsilon^{\theta}_{h+1,h+1}}v^{\b_{h+1}(A+\wmu,h)+\wmu\centerdot\bfj}\overline{[\![a_{h,h+1}+1]\!]}[A+\wmu+E^{\theta}_{h,h+1}-E^{\theta}_{h+1,h+1}]\\
&+&\sum_{\substack{h+1<p\leq N\\a_{h+1,p}\geq 1}}v^{\b_p(A,h)}\overline{[\![a_{h,p}+1]\!]}\sum_{\mu}v^{\wmu\centerdot\bfj}[A+\wmu+E^{\theta}_{h,p}-E^{\theta}_{h+1,p}],
\end{eqnarray*}
where $\mu$ runs over $\Lambda(n+1,r-\frac{|A|}2)$.
The first and last summations give the required form in (2). It remains to compute the second and third summations. If $a_{h+1,h}=a_{h+1,h}^\mu\geq1$, then $\varepsilon=1$ and
\begin{eqnarray*}
&&v^{\b_h(A+\wmu,h)+\wmu\centerdot\bfj}\overline{[\![a^{\mu}_{h,h}+1]\!]}[A+\wmu+E^{\theta}_{h,h}-E^{\theta}_{h+1,h}]\\
&=&v^{\b_h(A,h)+\wmu\centerdot({\bf j}+\al_h)}\overline{[\![\mu_h+1]\!]}[A-E^{\theta}_{h+1,h}+\wmu+\bfe_h^\theta]\\
&=&\frac{v^{\b_h(A,h)}}{1-v^{-2}}v^{\wmu\centerdot({\bf j}+\al_h)}\big(1-v^{-2(\wmu_h+1)}\big)[A-E^{\theta}_{h+1,h}+\wmu+\bfe_h^\theta]\\
&=&\frac{v^{\b_h(A,h)-j_h-j_{N+1-h}-1}}{1-v^{-2}}v^{\wmu\centerdot({\bf j}+\al_h)}v^{j_h+j_{N+1-h}+1}\big(1-v^{-2(\wmu_h+1)}\big)[A-E^{\theta}_{h+1,h}+\wmu+\bfe_h^\theta]\\
&=&\frac{v^{\b_h(A,h)-j_h-j_{N+1-h}-1}}{1-v^{-2}}\big(v^{(\wmu+\bfe_h^\theta)\centerdot({\bf j}+\bfe_h-\bfe_{h+1})}-v^{(\wmu+\bfe_h^\theta)\centerdot({\bf j}-\bfe_h-\bfe_{h+1})}\big)[A-E^{\theta}_{h+1,h}+\wmu+\bfe_h^\theta].
\end{eqnarray*}
Since $v^{\wla\centerdot({\bf j}+\bfe_h-\bfe_{h+1})}-v^{\wla\centerdot({\bf j}-\bfe_h-\bfe_{h+1})}=0$ whenever $\la_h=0$ (so $\wla_h=\wla_{N+1-h}=0$), it follows that
$$\aligned
\sum_{\mu\in\La(n+1,r-\frac{|A|}2)}&
\big(v^{(\wmu+\bfe_h^\theta)\centerdot({\bf j}+\bfe_h-\bfe_{h+1})}-v^{(\wmu+\bfe_h^\theta)\centerdot({\bf j}-\bfe_h-\bfe_{h+1})}\big)[A-E^{\theta}_{h+1,h}+\wmu+\bfe_h^\theta]\\
&=\sum_{\la\in\La(n+1,r-\frac{|A|}2+1)}\big(v^{\wla\centerdot({\bf j}+\alpha_h)}-v^{\wla\centerdot({\bf j}+\alpha_h^-)}\big)[A-E^{\theta}_{h+1,h}+\wla]\\
&=(A-E^{\theta}_{h+1,h})(\bfj+\alpha_h,r)-(A-E^{\theta}_{h+1,h})(\bfj+\alpha_h^-,r).
\endaligned$$
giving the second term in (2). Finally, for the third summation, we have
\begin{eqnarray*}
&&\sum_{\mu\in\La(n+1,r-\frac{|A|}2)\atop\wmu_{h+1}\geq \epsilon^{\theta}_{h+1,h+1}}v^{\b_{h+1}(A+\wmu,h)+\wmu\centerdot\bfj}\overline{[\![a_{h,h+1}+1]\!]}[A+\wmu+E^{\theta}_{h,h+1}-E^{\theta}_{h+1,h+1}]\\
&=&v^{\b_{h+1}(A,h)+j_{h+1}+j_{N-h}}\overline{[\![a_{h,h+1}+1]\!]}\sum_{\mu\atop\wmu_{h+1}\geq \epsilon^{\theta}_{h+1,h+1}}v^{(\wmu-e_{h+1}-e_{N-h})\centerdot\bfj}[A+E^{\theta}_{h,h+1}+\wmu-\bfe_{h+1}^\theta]\\
&=&v^{\b_{h+1}(A,h)+j_{h+1}+j_{N-h}}\overline{[\![a_{h,h+1}+1]\!]}(A+E^{\theta}_{h,h+1})({\bf j},r).
\end{eqnarray*}
Here, we have used an obvious bijection
$$\aligned
\Big\{\mu\in\La(n+1,r-\frac{|A|}2)\mid \mu_{h+1}\geq 1\Big\}&\lra\La(n+1,r-\frac{|A|}2-1)\\
(\mu_1,\ldots,\mu_{h},\mu_{h+1},\mu_{h+2},\ldots,\mu_{n+1})&\longmapsto(\mu_1,\ldots,\mu_{h},\mu_{h+1}-1,\mu_{h+2},\ldots,\mu_{n+1}).\endaligned$$ This proves (2). The proof of (3) is similar.\end{proof}

\begin{remark}\label{GLnMF}If one compares these multiplication formulas with those given in \cite[Lem.~5.3]{BLM} (or \cite[Thm. 13.27]{DDPW08}), they are very similar except the adjustments needed for the $h=n$ case. Of course, you may also see the difference arising from the symmetry of the matrices involved.
\end{remark}

Let $$\SnQ=\prod_{r\geq0}\SnrQ.$$
We will write the elements in $\SnQ$ as formal infinite series. 
Define, for $A\in\Xi_{2n+1}^{0\diag}$ and $\bfj\in\Z^{2n+1}$,
$$A({\bf j}):=\sum_{r\geq 0} A({\bf j},r)\in \SnQ.$$ 
For convenient use later, Theorem \ref{th1} is rewritten as follows.
\begin{theo}\label{MF}
For $N=2n+1$,  ${\bf j}=(j_1,j_2,\dots,j_{N})\in \mathbb{Z}^N$, $h\in[1,n]$, and $A=(a_{ij})\in\Xizero$, the following multiplication formulas hold in $\SnQ$:
\begin{itemize}
\item[] 
$(1)\;\;{ O}({\bf j})A({\bf j'})=v^{\ro(A)\centerdot{\bf j}}A({\bf {j+j'}}),\quad A({\bf j'}){O}({\bf j})=v^{\co(A)\centerdot\bfj}A({\bf {j+j'}});\qquad\qquad\quad\;\;$
\item[]\vspace{-3ex}
\begin{equation*}
\aligned
(2)\;\;E^{\theta}_{h,h+1}({\bf 0})&\cdot A({\bf j})=\sum_{\substack{1\leq p<h\\ a_{h+1,p}\geq 1}}v^{\b_p(A,h)}\overline{[\![a_{h,p}+1]\!]}
(A+E^{\theta}_{h,p}-E^{\theta}_{h+1,p})({\bf j}+\alpha_{h})\\
&+\delta^\leq_{1,a_{h+1,h}} \frac{v^{\b_h(A,h)-j_h-j_{N+1-h}-1}}{1-v^{-2}}\Big((A-E^{\theta}_{h+1,h})({\bf j}+\al _h)-(A-E^{\theta}_{h+1,h})({\bf j}+\al^-_h)\Big)\\
&+ v^{\b_{h+1}(A,h)+j_{h+1}+j_{N-h}}\overline{[\![a_{h,h+1}+1]\!]}(A+E^{\theta}_{h,h+1})({\bf j})\\
&+\sum_{\substack{h+1<p\leq N\\ a_{h+1,p}\geq 1}}v^{\b_p(A,h)}\overline{[\![a_{h,p}+1]\!]}
(A+E^{\theta}_{h,p}-E^{\theta}_{h+1,p})({\bf j}).\\
\endaligned
\end{equation*}
\smallskip
\item[]\vspace{-3ex}
\begin{equation*}
\aligned
(3)\, E^{\theta}_{h+1,h}&({\bf 0})\cdot A({\bf j})=
\sum_{\substack{1\leq p<h\\ a_{h,p}\geq 1}}v^{\b'_p(A,h)}\overline{[\![a_{h+1,p}+1]\!]}
(A-E^{\theta}_{h,p}+E^{\theta}_{h+1,p})({\bf j})\\
&+ v^{\b'_h(A,h)+j_{h}+j_{N+1-h}}\overline{[\![a_{h+1,h}+1]\!]}(A+E^{\theta}_{h+1,h})({\bf j})\\
&+\delta_{1,a_{h,h+1}}^\leq \frac{v^{\b'_{h+1}(A,h)-j_{h+1}-j_{N-h}-1}}{1-v^{-2}}\Big(\frac{(A-E^{\theta}_{h,h+1})({\bf j}-\al _h)}{v^{\delta_{h,n}}}-\frac{(A-E^{\theta}_{h,h+1})({\bf j}+\al^-_h)}{v^{-\delta_{h,n}}}\Big)\\
&+\sum_{\substack{h+1<p\leq N\\ a_{h,p}\geq 1}}v^{\b'_p(A,h)}\overline{[\![a_{h+1,p}+1]\!]}
(A-E^{\theta}_{h,p}+E^{\theta}_{h+1,p})({\bf j}-\al_h).
\endaligned
\end{equation*}
\end{itemize}
\end{theo}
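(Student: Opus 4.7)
The plan is to deduce Theorem~\ref{MF} directly from Theorem~\ref{th1} by exploiting the fact that $\SnQ=\prod_{r\geq 0}\SnrQ$ is a direct product, in which multiplication is performed componentwise. Under the identification of $A(\bfj)$ with the formal series $\sum_{r\geq 0} A(\bfj,r)$, the $r$-th component of $A(\bfj)$ is precisely $A(\bfj,r)$ (which is understood as $0$ once $|A|>2r$). Thus every claimed identity in $\SnQ$ decomposes into the family of its $r$-th components, one identity in each $\SnrQ$, and it suffices to verify each component.

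First I will handle (1). For each fixed $r$, the $r$-th component of ${O}(\bfj)A(\bfj')$ is $O(\bfj,r)\cdot A(\bfj',r)$, which by Theorem~\ref{th1}(1) equals $v^{\ro(A)\centerdot\bfj}A(\bfj+\bfj',r)$. Summing these identities over $r\geq 0$ recovers $v^{\ro(A)\centerdot\bfj}A(\bfj+\bfj')$ as required; the second equality is symmetric. For parts (2) and (3), the $r$-th component of the left-hand side is $E^\theta_{h,h+1}(\mathbf 0,r)\cdot A(\bfj,r)$, which is expanded by Theorem~\ref{th1}(2). The crucial observation is that \emph{every structure constant} appearing in that expansion --- namely the exponents of $v$ built from $\beta_p(A,h)$, $\beta_p(A,h)-j_h-j_{N+1-h}-1$, $\beta_{h+1}(A,h)+j_{h+1}+j_{N-h}$, etc., together with the quantum integers $\overline{[\![a_{h,p}+1]\!]}$ and the factors $\delta^\leq_{1,a_{h+1,h}}$ --- depends only on the entries of $A$ and the vector $\bfj$, and is \emph{independent of $r$}. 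Consequently, summing the resulting identities over $r\geq 0$ collects the $r$-indexed terms $A'(\bfj'',r)$ on the right-hand side into the infinite series $A'(\bfj'')$ with common $r$-independent coefficients, yielding precisely the right-hand side of (2). The argument for (3) is verbatim the same, applied to Theorem~\ref{th1}(3).

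There is essentially no substantive obstacle here; the content of the theorem is the stabilisation phenomenon already encoded in Theorem~\ref{th1}, packaged into the direct product. The only item deserving a line of care is the boundary behaviour for small $r$: when $r<\tfrac{|A|}{2}$ (or when $r<\tfrac{|A|+2}{2}$ for the products involving $E^\theta_{h,h+1}(\mathbf 0)$), one of the factors on the left vanishes by the convention in \eqref{Ajr}, and every new matrix $A'$ appearing on the right of Theorem~\ref{th1}(2)--(3) also satisfies $|A'|>2r$, so the corresponding $A'(\bfj'',r)$ is zero as well. Hence the identity holds trivially in $\SnrQ$ for these small $r$, the infinite sums in $\SnQ$ are well-defined as formal series with only finitely many nonzero components contributing to any given $r$-th slot, and the theorem follows by assembling the component-wise identities.
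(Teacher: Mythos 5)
Your proposal is correct and takes essentially the same approach as the paper's one-line proof, which also deduces Theorem~\ref{MF} from Theorem~\ref{th1} by noting that the structure constants are independent of $r$; your write-up simply supplies the componentwise bookkeeping and boundary-case details that the paper leaves implicit.
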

\begin{proof}This is clear since the coefficients in the multiplication formulas in Theorem \ref{th1} are independent of $r$ for all $r\geq\frac12|A|$.
\end{proof}
\begin{coro}\label{cor1} For $h\in[1,n]$, we have in $\SnQ$
\begin{itemize}
\item[(1)] $E^{\theta}_{h,h+1}({\bf 0})^m=[m]^!  (mE^{\theta}_{h,h+1})({\bf 0});$
\item[(2)] $E^{\theta}_{h+1,h}({\bf 0})^m=[m]^!  (mE^{\theta}_{h+1,h})({\bf 0}).$
\end{itemize}
\end{coro}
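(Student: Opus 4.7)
\medskip

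The plan is to prove both parts by induction on $m$, using the multiplication formulas in Theorem \ref{MF}. I sketch part (1); part (2) is entirely analogous via Theorem \ref{MF}(3).

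The base case $m=1$ is immediate since $[1]^!=1$. For the inductive step, assume $E^\theta_{h,h+1}(\mathbf 0)^{m-1}=[m-1]^!\,((m-1)E^\theta_{h,h+1})(\mathbf 0)$, and apply Theorem \ref{MF}(2) with $A=(m-1)E^\theta_{h,h+1}$ and $\bfj=\mathbf 0$. The crucial observation is that the matrix $A$ has only two nonzero entries, both equal to $m-1$, located at positions $(h,h+1)$ and $(2n+2-h,2n+1-h)$. In particular, \emph{row $h+1$ of $A$ is entirely zero}, since $h+1\notin\{h,2n+2-h\}$ for $h\in[1,n]$. Consequently in Theorem \ref{MF}(2): the first and fourth sums vanish (they require $a_{h+1,p}\geq 1$), and the second term vanishes since $\delta^\leq_{1,a_{h+1,h}}=\delta^\leq_{1,0}=0$. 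Only the third term survives.

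It remains to compute the coefficient in the third term. One directly checks from \eqref{beta} that
\[
\beta_{h+1}(A,h)=\sum_{j\geq h+1}a_{h,j}-\sum_{j>h+1}a_{h+1,j}+\delta_{h,n}\delta^\leq_{h+1,n}=(m-1)-0+0=m-1,
\]
where the correction term vanishes uniformly: $\delta_{h,n}=0$ if $h<n$, and $\delta^\leq_{h+1,n}=0$ if $h=n$. Combined with the elementary identity $[\![m]\!]=v^{m-1}[m]$ and the bar-involution calculation $\overline{[\![m]\!]}=v^{2-2m}[\![m]\!]=v^{1-m}[m]$, we get $v^{\beta_{h+1}(A,h)}\overline{[\![m]\!]}=v^{m-1}\cdot v^{1-m}[m]=[m]$. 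Thus
\[
E^\theta_{h,h+1}(\mathbf 0)\cdot\big((m-1)E^\theta_{h,h+1}\big)(\mathbf 0)=[m]\,(mE^\theta_{h,h+1})(\mathbf 0),
\]
and multiplying by $[m-1]^!$ via the induction hypothesis yields (1).

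For part (2), applying Theorem \ref{MF}(3) with $A=(m-1)E^\theta_{h+1,h}$ leaves row $h$ of $A$ zero. This forces the first and fourth sums to vanish, and the third term vanishes via $\delta^\leq_{1,a_{h,h+1}}=0$. Only the second term survives, and the same kind of computation gives $\beta'_h(A,h)=a_{h+1,h}=m-1$, hence the coefficient is again $v^{m-1}\overline{[\![m]\!]}=[m]$. Induction then closes the argument. The only mild obstacle throughout is the bookkeeping around the case $h=n$, where $E^\theta_{n,n+1}=E_{n,n+1}+E_{n+2,n+1}$ (resp.\ $E^\theta_{n+1,n}=E_{n+1,n}+E_{n+1,n+2}$) has its two entries sharing a column (resp.\ row); but this does not disturb the vanishing of rows/columns used above, and the $\delta_{h,n}$-term in $\beta_{h+1}$ and the $v^{\pm\delta_{h,n}}$-factors in \ref{MF}(3) do not appear in the surviving term.
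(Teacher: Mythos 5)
Your proof is correct and follows essentially the same approach as the paper: induction on $m$ using Theorem \ref{MF}, with the key observation that row $h+1$ (resp.\ row $h$) of $A=(m-1)E^\theta_{h,h+1}$ (resp.\ $(m-1)E^\theta_{h+1,h}$) vanishes so only the diagonal term of the multiplication formula survives. Your write-up supplies the general inductive step and the bar-involution identity $v^{m-1}\overline{[\![m]\!]}=[m]$ more explicitly than the paper, which only carries out the case $m=2$ and defers the rest to induction.
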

\begin{proof} We only prove (1); the proof of (2) is similar.
For $A=mE^{\theta}_{h,h+1}$, the only non-zero entry in row $h+1$ is the diagonal entry and
$$\b_{h+1}(A,h)=\sum_{j\geq h+1}a_{h,j}-\sum_{j> h+1 }a_{h+1,j}+\delta_{h,n}\delta_{h+1,n}^\leq=m.$$ 
Thus, by Theorem \ref{MF}(2), 
$$E^{\theta}_{h,h+1}({\bf0})^2=v\overline{[\![1+1]\!]}(2E^{\theta}_{h,h+1})({\bf 0})=[2]^!(2E^{\theta}_{h,h+1})({\bf 0}).$$
Now the general case follows from an induction.
\end{proof}

\section{The subalgebra $\Ajn$}

We now prove that the subspace of $\SnQ$:
$$\Ajn=\text{span}\{A({\bf j})\mid A\in\Xinl,\,{\bf j}\in \mathbb{Z}^N\}$$
is indeed a subalgebra.

As in \cite{BKLW} or \cite[5.3]{BLM}, we define a preorder $\preceq$ on $\Xin$ as follows.
\begin{equation*}
A\preceq B \Longleftrightarrow
\sum_{r\leq i;s\geq j}a_{rs}\leq \sum_{r\leq i;s\geq j}b_{rs}, \mbox{ for all $1\leq i<j\leq 2n+1$}.
\end{equation*}
Clearly, $A\preceq B\Longleftrightarrow\sum_{r\geq i;s\leq j}a_{rs}\leq \sum_{r\geq i;s\leq j}b_{rs},\mbox{for all $i>j$}$.
We write $A\prec B$ if $A\preceq B$ and $B\not\preceq A$. 

 Let 
 $$\mathscr T_{2n+1}=\{(i,h,j)\mid 1\leq j\leq h<i\leq 2n+1\}.$$
  We order the set as in \cite[Thm.~3.10]{BKLW}:
\begin{equation}\label{order leq}
(i,h,j)\leq(i',h',j')\iff i<i'\text{ or }i=i',j<j'\text{ or }i=i',j=j', h>h'.
\end{equation}
This order modifies the order $\leq_i$ defined in \cite[(13.7.1)]{DDPW08}. For example, the first few elements in $(\mathscr T_{2n+1},\leq)$ are
\begin{equation}\label{seq}
\aligned
&(2,1,1),(3,2,1),(3,1,1),(3,2,2),(4,3,1),\ldots(4,3,2),\\
&(4,2,2),(4,3,3),\ldots, (N,N-1,N-1).
\endaligned
\end{equation}

For $A\in\Xinl$, let
\begin{equation}\label{MB}
\fkmAl:=\prod _{(i,h,j)\in(\mathscr T_{2n+1},\leq)}(a_{i,j}E^{\theta}_{h+1,h})({\bf 0}),
\end{equation}
where the product is taken with respect to the order $\leq$. Thus, by \eqref{seq},
 the leading term of the product $\fkmAl$ is $(a_{2,1}E^{\theta}_{2,1})({\bf 0})$ and the ending term is $(a_{N,N-1}E^{\theta}_{N,N-1})({\bf 0})$ ($N=2n+1$). 

\begin{lemm} \label{TR}For each $A\in\Xinl$, we have
$$\fkmAl=A(\bfl)+\sum_{B\in\Xinl,\bfj\in\Z^N\atop B\prec A}g_{A,B,\bfj}B(\bfj)=A(\bfl)+(\text{lower terms}).$$
\end{lemm}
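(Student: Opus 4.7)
The plan is to prove the triangular relation by iteratively expanding $\fkmAl$ using the multiplication formula in Theorem \ref{MF}(3), tracking the matrix shape of each term produced relative to the order $\preceq$.

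First, I would rewrite each factor via Corollary \ref{cor1}(2) as $(a_{i,j}E^\theta_{h+1,h})(\bfl) = \frac{1}{[a_{i,j}]!}E^\theta_{h+1,h}(\bfl)^{a_{i,j}}$, so that $\fkmAl$ becomes (up to factorial normalisation) a product of powers of the lowering operators $E^\theta_{h+1,h}(\bfl)$ arranged in the order prescribed by $\leq$ on $\mathscr{T}_{2n+1}$. I would then induct on partial products, reading from right to left: at each stage the accumulated element is a linear combination $\sum c_{B,\bfj}B(\bfj)$, and the next application of $E^\theta_{h+1,h}(\bfl)$ is expanded by Theorem \ref{MF}(3).

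The formula splits into four classes of terms: (i) push-downs at columns $p<h$, (ii) a creation term at position $(h+1,h)$, (iii) an upper-triangular annihilation at $(h,h+1)$, and (iv) push-downs at columns $p>h+1$. The order on $\mathscr{T}_{2n+1}$ is designed so that following a specific ``main branch'' at each step accumulates the target matrix $A$: for each $(i,j)$ with $i>j$, the triple $(i,j,j)$ (rightmost among those with fixed $(i,j)$) plants $a_{i,j}$ entries at position $(j+1,j)$ and its $\theta$-mirror via term (ii); the subsequent triples $(i,h,j)$ with $j<h\le i-1$, applied in decreasing order of $h$ from the right, slide these entries along column $j$ down to $(i,j)$ via terms of type (i). Collecting across all lower-triangular positions yields $A(\bfl)$.

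It then remains to verify (a) that every non-main-branch term produces a matrix strictly lower than $A$ in $\preceq$, and (b) that the coefficient of $A(\bfl)$ equals $1$. The hard part is (a): I would argue that an application of type (iii), a wrong-direction push-down of type (iv), or a push-down of type (i) executed before its source row has been stocked, strictly decreases one of the partial sums $\sum_{r\le i,\,s\ge j}a_{rs}$ defining $\preceq$, and that this deficit cannot be undone by the remaining factors since each factor is consumed only once and the order forbids backtracking. Step (b) is a telescoping computation: the $v^{\beta'_p(\cdot,h)}$-shifts collapse to $\bfj=\bfl$ along the main branch, and the $\overline{[\![m+1]\!]}$ quantum-integer factors from Theorem \ref{MF}(3) combine with the $[a_{i,j}]!$ denominators from Corollary \ref{cor1}(2) to give exactly $1$, paralleling the GL case in \cite[Lem.~5.3]{BLM}. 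A cleaner alternative would be to transport the triangular relation already established in \cite[Thm.~3.10]{BKLW} through the normalisation $[A]=v^{r(A)-d(A)}e_A$; the main work there coincides with the combinatorial analysis above.
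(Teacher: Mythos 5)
Your direct route — expanding $\fkmAl$ iteratively in $\SnQ$ via Theorem~\ref{MF}(3) and tracking a ``main branch'' — is a genuinely different strategy from what the paper does, and it could be made to work, but the critical step (a) is exactly the combinatorial heavy lifting that the paper deliberately avoids redoing. The paper instead projects onto each $r$-component $\pi_r(\fkmAl)$, expands every factor $(a_{i,j}E^\theta_{h+1,h})(\mathbf 0,r)$ as $\sum_{\mu_{i,h,j}}[a_{i,j}E^\theta_{h+1,h}+\wmu_{i,h,j}]$, and then cites \cite[Thm.~3.10]{BKLW} for the triangularity of each such product of $[\,\cdot\,]$-elements. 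The extra work in the paper is not the triangularity itself but the \emph{collapsing} argument: using that $\ro$ of the leftmost factor and $\co$ of the rightmost factor must match $\ro(A+\wla)$ and $\co(A+\wla)$, it shows that $\wla$ is determined by $\wmu_{2,1,1}$ and that all the remaining $\wmu_{i,h,j}$ are forced to be the $D^\la_{i,h,j}$ of \cite{BKLW}; hence the multi-index sum over all $\mu_{i,h,j}$ degenerates to a single sum over $\la$, which is precisely $A(\mathbf 0,r)$ plus lower terms. Your sketched ``cleaner alternative'' correctly identifies \cite[Thm.~3.10]{BKLW} as the source, but the phrase ``transport through the normalisation'' undersells the issue: the normalisation is trivial, while the collapse of the $\mu$-sums to a $\la$-sum is the actual content one must add. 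If you insist on the direct route, you must carry out step (a) honestly (it amounts to re-proving the BKLW triangularity inside $\SnQ$, with all the case analysis that entails), and the preliminary rewriting via Corollary~\ref{cor1}(2) is unnecessary and only multiplies the number of factor applications to track. Both approaches ultimately work; the paper's is shorter because it reuses an existing result and only adds the determinacy argument that you omit.
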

\begin{proof}Repeatedly applying Theorem \ref{MF} yields
\begin{equation}\notag
\fkmAl=\sum_{\substack{B\in {\Xinl}\\{\bf j}\in \mathbb{Z}^N}}g_{A,B,{\bf j}}B({\bf j})
\end{equation}
It suffices to prove that $g_{A,A,{\bf 0}}=1$ and $B \prec A$ whenever $g_{A,B,\,{\bf j}}\neq 0$. Consider the $r$-th component of $\fkmAl$:
\begin{eqnarray*}
\pi_r(\fkmAl)&=&\prod _{1\leq j\leq h<i\leq N}^\leq(a_{i,j}E^{\theta}_{h+1,h})({\bf 0},r)\\
&=&\prod _{1\leq j\leq h<i\leq N}^\leq\sum_{\mu_{i,h,j}\in\La(n+1,r-a_{i,j})}[a_{i,j}E^{\theta}_{h+1,h}+\wmu_{i,h,j}]\\
&=&\sum_{\mu_{i,h,j}\in\La(n+1,r-a_{i,j})\atop (i,h,j)\in\mathscr T_{2n+1}}\prod _{1\leq j\leq h<i\leq N}^\leq[a_{i,j}E^{\theta}_{h+1,h}+\wmu_{i,h,j}].
\end{eqnarray*}
If such a product $\prod _{ j\leq h<i}^\leq[a_{i,j}E^{\theta}_{h+1,h}+\wmu_{i,h,j}]\neq0$, by \cite[Thm. 3.10]{BKLW}, there exists $\la\in\La(n+1,r-\frac{|A|}2)$ such that 
$$\prod _{ j\leq h<i}^\leq[a_{i,j}E^{\theta}_{h+1,h}+\wmu_{i,h,j}]=[A+\wla]+(\text{lower terms}).$$
Since 
$\ro(a_{2,1}E^\theta_{2,1}+\wmu_{2,1,1})=\ro(A+\wla)$ and $\co(a_{N,N-1}E^\theta_{N,N-1}+\wmu_{N,N-1,N-1})=\co(A+\wla)$, we have $\wla=\wmu_{2,1,1}+\bfe^\theta_2-\ro(A)$. Thus, with the notation in \cite[Thm.~3.10]{BKLW}, we have $D_{2,1,1}^\la:=\wmu_{2,1,1}$ and $D_{N,N-1,N-1}^\la:=\wmu_{N,N-1,N-1}$, and all other $D_{i,h,j}^\la=\wmu_{i,h,j}$ are completely determined by $A+\wla$.
Hence,
$$\aligned
\pi_r(\fkmAl)&=\sum_{\la\in\La(n+1,r-\frac{|A|}2)}
\prod _{1\leq j\leq h<i\leq N}^\leq[a_{i,j}E^{\theta}_{h+1,h}+D_{i,h,j}^\la]\\
&=\sum_{\la\in\La(n+1,r-\frac{|A|}2)}\Big([A+\wla]+(\text{lower terms})\Big)\\
&=A(\bfl,r)+\sum_{\substack{B\in\Xinl,\,B\prec A\\{\bf j}\in \mathbb{Z}^N}}g_{A,B,\,{\bf j}}B({\bf j},r),
\endaligned$$
as desired.
\end{proof}
\begin{theo}\label{th2}
The vector space $\Ajn=span\{A({\bf j})\mid A\in\Xinrl,\,{\bf j}\in \mathbb{Z}^{2n+1}\}$ of the algebra $\SnQ$ is a subalgebra which is generated by
$$ E^{\theta}_{h,h+1}({\bf 0}),\quad E^{\theta}_{h+1,h}({\bf 0}),\quad O({\bf \pm e_i}),$$
for all $1\leq h\leq n$ and $1\leq i\leq n+1$, and is presented by the multiplication formulas in Theorem \ref{MF}.
\end{theo}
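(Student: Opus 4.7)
My plan is to prove that the subalgebra $\mathcal B$ of $\SnQ$ generated by the listed elements coincides with $\Ajn$. One direction will be immediate: since $O(\mathbf{0})=1\in \Ajn$ and every multiplication formula in Theorem \ref{MF} expresses a product of a generator with an element $A(\mathbf{j})$ as a linear combination of elements $B(\mathbf{j}'')\in\Ajn$, left multiplication by any generator preserves $\Ajn$, and hence $\mathcal B\subseteq \Ajn$.

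For the reverse inclusion I would assemble three ingredients. First, applying Theorem \ref{MF}(1) with $A=O$ gives $O(\mathbf{j})O(\mathbf{j}')=O(\mathbf{j}+\mathbf{j}')$, so the multiplicative closure of $\{O(\pm\mathbf{e}_i)\}$ recovers every $O(\mathbf{j})$ for $\mathbf{j}\in \Z^{2n+1}$. Second, Corollary \ref{cor1} gives $(a_{i,j}E^\theta_{h+1,h})(\mathbf{0})=\frac{1}{[a_{i,j}]!}E^\theta_{h+1,h}(\mathbf{0})^{a_{i,j}}\in \mathcal B$, so the monomial $\fkmAl$ from \eqref{MB} lies in $\mathcal B$ for every $A\in\Xinl$. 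Third, Theorem \ref{MF}(1) supplies the conversion
$$A(\mathbf{j})=v^{-\ro(A)\centerdot\mathbf{j}}\, O(\mathbf{j})\,A(\mathbf{0}),$$
which reduces the task to proving $A(\mathbf{0})\in\mathcal B$ for every $A\in\Xinl$.

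I would then establish this claim by induction on $A$ with respect to the preorder $\preceq$. The minimal case $A=O$ is $O(\mathbf{0})=1\in\mathcal B$. For the inductive step, Lemma \ref{TR} rewrites
$$A(\mathbf{0})=\fkmAl-\sum_{B\prec A,\,\mathbf{j}\in\Z^{2n+1}}g_{A,B,\mathbf{j}}\,B(\mathbf{j}).$$
The leading monomial $\fkmAl$ is already in $\mathcal B$, and by the inductive hypothesis each $B(\mathbf{0})$ with $B\prec A$ is in $\mathcal B$, so the conversion formula forces $B(\mathbf{j})\in\mathcal B$ as well. Thus $A(\mathbf{0})\in \mathcal B$ and the induction closes, yielding $\Ajn=\mathcal B$.

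The main obstacle I anticipate is the coupling between the matrix $B$ and the arbitrary weight $\mathbf{j}$ appearing in the lower-order terms of Lemma \ref{TR}: a naive induction on $A$ alone cannot absorb the dependence on $\mathbf{j}$. The resolution is precisely the factorisation $B(\mathbf{j})=v^{-\ro(B)\centerdot\mathbf{j}}O(\mathbf{j})B(\mathbf{0})$, which decouples the $B$- and $\mathbf{j}$-dependence and allows the induction to run purely on the $\preceq$-order of $B$. Once $\Ajn=\mathcal B$ is established, the formulas in Theorem \ref{MF} describe the left action of every generator on each basis element, which together with associativity of $\SnQ$ supplies the claimed presentation.
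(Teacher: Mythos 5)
Your proof is correct and follows essentially the same strategy as the paper's: show the generators land in $\Ajn$ via Theorem~\ref{MF}, use Corollary~\ref{cor1} to get each $\fkmAl$ from the generators, and invert the triangular relation of Lemma~\ref{TR} by induction, using $A({\bf j}) = v^{-\ro(A)\centerdot{\bf j}}O({\bf j})A({\bf 0})$ to decouple $\bf j$. The one point you gloss over is the legitimacy of ``induction on $A$ with respect to the preorder $\preceq$'': a preorder does not by itself support induction, and you need to know $\prec$ is well-founded on $\Xinl$. The paper handles this by introducing the non-negative integer $\|A\|=\sum_{i<j}\tfrac{(j-i)(j-i+1)}{2}(a_{ij}+a_{ji})$ and noting that $B\prec A$ forces $\|B\|<\|A\|$, so the argument reduces to ordinary induction on $\N$. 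Your version would be complete if you added the observation that each down-set $\{B\in\Xinl : B\preceq A\}$ is finite (the corner sums defining $\preceq$, together with the centro-symmetry, bound every off-diagonal entry of $B$), or simply cite the paper's $\|\cdot\|$ device.
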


\begin{proof}
Let $\fkA'$ be the subalgebra generated by $$ E^{\theta}_{h,h+1}({\bf 0}),\quad E^{\theta}_{h+1,h}({\bf 0}),\quad O({\bf \pm e_i}).$$ By Theorem \ref{MF}, we have $\fkA'\subseteq \Ajn$. We now prove $\Ajn\subseteq \fkA'$. We prove all $A(\bfj)\in\fkA'$ by induction on $\| A\|$, where
\begin{eqnarray*}
\|A\|=\sum_{1\leq i<j\leq N}\frac{(j-i)(j-i+1)}{2}(a_{ij}+a_{ji}).
\end{eqnarray*}

If $\| A\|=0$, then $A=O$, the zero matrix, and $A({\bf j})=O({\bf j})=\prod_{i=1}^{N}  O({\bf e_i})^{j_{i}}\in \fkA'$. Assume now $\| A\|> 0$ and $B({\bf j})\in \fkA'$ for $\| B\| < \| A\|$. Firstly, by Corollary \ref{cor1}, we have 
$$(a_{i,j}E^{\theta}_{h+1,h})({\bf 0})=\frac{E^{\theta}_{h+1,h}({\bf 0})^{a_{i,j}}}{[a_{i,j}]^! }\in\fkA'.$$
Thus, for all $A\in\Xinl, \bfj\in\Z^N$, $\fkmAl, O(\bfj)\fkmAl\in\fkA'$. By Lemma \ref{TR},
$$v^{-\ro(A)\centerdot\bfj}\ {O}({\bf j})\fkmAl=A({\bf {j}})+\sum_{B\in\Xinl,\bfj'\in\Z^N\atop B\prec A}g'_{A,B,\bfj'}B(\bfj').$$
Since $B\prec A$ implies that $\|B\|<\|A\|$, it follows from induction that all $B(\bfj')\in\fkA'$. Hence, $A(\bfj)\in\fkA'$.
\end{proof}
For $A\in\Xinl, \bfj\in\Z^N$, let $\fkmAj=O(\bfj)\fkmAl$.
\begin{coro} The $\Q(v)$-algebra $\Ajn$ has bases
$$\mathfrak B=\{A(\bfj)\mid A\in\Xinl, \bfj\in\Z^N\}\quad\text{and}\quad
\mathfrak M=\{\fkmAj\mid A\in\Xinl, \bfj\in\Z^N\}.$$
\end{coro}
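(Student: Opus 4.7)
The plan is first to verify that $\mathfrak B$ is linearly independent in $\SnQ$ (it already spans $\Ajn$ by its very definition), and then to derive that $\mathfrak M$ is also a basis via a unitriangular change-of-basis relation between $\mathfrak M$ and $\mathfrak B$.

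For the transition, I would apply $O(\bfj)$ on the left to the expansion from Lemma \ref{TR} and propagate it through each summand using Theorem \ref{MF}(1), obtaining
\begin{equation*}
\fkmAj \;=\; O(\bfj)\,\fkmAl \;=\; v^{\ro(A)\centerdot\bfj}\,A(\bfj) \;+\; \sum_{B\prec A,\,\bfj'} g_{A,B,\bfj'}\,v^{\ro(B)\centerdot\bfj}\,B(\bfj+\bfj').
\end{equation*}
This is unitriangular with respect to the preorder $\preceq$ on $\Xinl$, with a nonzero leading scalar $v^{\ro(A)\centerdot\bfj}$, so the transition matrix from $\mathfrak M$ to $\mathfrak B$ is invertible; hence $\mathfrak M$ is a basis once $\mathfrak B$ is.

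For the linear independence of $\mathfrak B$, suppose $\sum c_{A,\bfj}A(\bfj)=0$ is a finite combination, and project onto each $\SnrQ$ via the defining formula \eqref{Ajr}. Distinct pairs $(A,\la)$ with $A\in\Xinl$ and $\la\in\La(n+1,r-|A|/2)$ produce distinct matrices $B=A+\wla\in\Xi_{2n+1,2r+1}$ (one recovers $A$ and $\wla$ from $B$ as its off-diagonal and diagonal parts), so the linear independence of the standard basis $\{[B]\}_{B\in\Xi_{2n+1,2r+1}}$ of $\SnrQ$ forces
\begin{equation*}
\sum_{\bfj} c_{A,\bfj}\,v^{\wla\centerdot\bfj}\;=\;0
\end{equation*}
for every such pair $(A,\wla)$. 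Since $r$ is arbitrary, this must hold for every $\la\in\La(n+1,r')$ with $r'$ unbounded, and a multivariate Vandermonde argument in the monomials $x_i=v^{j_i+j_{N+1-i}}$ ($1\leq i\leq n$) and $x_{n+1}=v^{2j_{n+1}}$ then forces $c_{A,\bfj}=0$, once the redundancy in the $\bfj$-parameter is quotiented out.

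The main obstacle I anticipate is precisely this redundancy: from $\wla\centerdot\bfj = \sum_{i=1}^n \la_i(j_i+j_{N+1-i}) + (2\la_{n+1}+1)j_{n+1}$ one sees that $A(\bfj)$ depends on $\bfj$ only through its $\theta$-symmetric data, so distinct parameters in $\Z^{2n+1}$ may yield identical elements $A(\bfj)$. The sets $\mathfrak B$ and $\mathfrak M$ should therefore be understood modulo this equivalence (equivalently, by choosing canonical representatives with $j_i=j_{N+1-i}$, so that effectively the free data is an element of $\Z^{n+1}$). With this reduction, distinct equivalence classes produce distinct $(n+1)$-tuples $(x_1,\ldots,x_{n+1})$ in the torsion-free multiplicative group generated by $v$, and the standard multivariate Vandermonde determinant argument applies to conclude the linear independence of $\mathfrak B$ and, via the triangular relation above, of $\mathfrak M$.
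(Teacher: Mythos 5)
Your proposal follows essentially the same route as the paper's terse proof — a Vandermonde argument for $\mathfrak B$, and the triangular relation from Lemma \ref{TR}, pushed through $O(\bfj)$ via Theorem \ref{MF}(1), for $\mathfrak M$. Your transition formula for $\fkmAj$ is correct (though calling it ``unitriangular'' is a slight misnomer: the diagonal scalars are the invertible units $v^{\ro(A)\centerdot\bfj}$, not $1$, and the transition matrix is block triangular with respect to any linear refinement of $\preceq$). The genuinely valuable part of your write-up is the observation that the map $\bfj\mapsto A(\bfj)$ is not injective, and it is correct: since $\wla\centerdot\bfj=\sum_{i\leq n}\la_i(j_i+j_{N+1-i})+(2\la_{n+1}+1)j_{n+1}$ with $\la$ ranging over all of $\N^{n+1}$ as $r$ varies, the element $A(\bfj)$ depends on $\bfj$ only through the $n+1$ quantities $j_i+j_{N+1-i}$ ($1\leq i\leq n$) and $j_{n+1}$. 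So the family indexed by $\Xinl\times\Z^{2n+1}$ has genuine repetitions and, read literally as an indexed family, is not linearly independent; only the set $\mathfrak B$ (naturally indexed by $\Xinl\times\Z^{n+1}$) is a basis. The paper's one-line proof does not flag this.

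One slip in your remedy: the proposed representatives with $j_i=j_{N+1-i}$ force $j_i+j_{N+1-i}=2j_i$ to be even, so they miss half the equivalence classes; take instead, say, $j_{N+1-i}=0$ for $1\leq i\leq n$, leaving $(j_1,\ldots,j_{n+1})\in\Z^{n+1}$ free. This does not damage your final Vandermonde step, which you correctly phrase in terms of the classes themselves rather than representatives: distinct classes give distinct tuples $(v^{j_1+j_{2n+1}},\ldots,v^{j_n+j_{n+2}},v^{2j_{n+1}})$ of elements of infinite multiplicative order, and the extra constant factor $v^{j_{n+1}}$ coming from the odd shift $2\la_{n+1}+1$ is a unit that can be absorbed into the coefficient.
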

\begin{proof}The assertion for $\mathfrak B$ is standard with an argument involving Vandermonde determinant.  The assertion for $\mathfrak M$ follows from the assertion for $\mathfrak B$ and the triangular relation in Lemma \ref{TR}.
\end{proof}

\begin{coro}\label{can proj} The canonical projection from $\SnQ$ onto $\SnrQ$ restricts to a $\Q(v)$-algebra epimorphism
$$\pi_r:\Ajn\to\SnrQ.$$
\end{coro}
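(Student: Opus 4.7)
The approach splits into an essentially trivial algebraic observation and a combinatorial/linear-algebraic core. Since $\SnQ=\prod_{r\geq 0}\SnrQ$ is by definition a direct product of $\Q(v)$-algebras, the canonical projection $\pi_r$ onto the $r$-th factor is automatically a $\Q(v)$-algebra homomorphism, so its restriction to the subalgebra $\Ajn$ is a homomorphism into $\SnrQ$. The entire content of the corollary is therefore surjectivity.

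To set this up, I would work through the basis $\mathfrak B=\{A(\bfj)\mid A\in\Xinl,\bfj\in\Z^{2n+1}\}$ supplied by the previous corollary. Directly from the definition of $A(\bfj)$,
\[
\pi_r\bigl(A(\bfj)\bigr)\;=\;A(\bfj,r)\;=\;\sum_{\la\in\La(n+1,r-|A|/2)}v^{\wla\centerdot\bfj}\,[A+\wla]
\]
when $|A|\le 2r$, and vanishes otherwise. Comparing with the standard basis $\{[M]\mid M\in\Xinr\}$ of $\SnrQ$, the first step is to check that every $M\in\Xinr$ factors uniquely as $M=A+\wla$ with $A\in\Xinl$ and $\la\in\La(n+1,r-|A|/2)$. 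This reduces to showing that the middle diagonal entry $m_{n+1,n+1}$ is always odd, which is a quick parity count: the symmetry $m_{i,j}=m_{2n+2-i,2n+2-j}$ pairs off every entry of $M$ except $m_{n+1,n+1}$, so $|M|=2r+1$ odd forces $m_{n+1,n+1}$ to be odd. Given this, one recovers $A=M-\diag(M)\in\Xinl$ and $\la=(m_{1,1},\ldots,m_{n,n},(m_{n+1,n+1}-1)/2)$.

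With the index sets matched, it remains to fix $A\in\Xinl$ with $|A|\le 2r$ and show that $\{A(\bfj,r)\mid \bfj\in\Z^{2n+1}\}$ spans the subspace $V_A:=\bigoplus_{\la}\Q(v)[A+\wla]$. Expanding
\[
\wla\centerdot\bfj=\sum_{i=1}^n \la_i\bigl(j_i+j_{2n+2-i}\bigr)+(2\la_{n+1}+1)\,j_{n+1},
\]
the coefficient $v^{\wla\centerdot\bfj}$ becomes $v^{j_{n+1}}$ times a monomial in the $n+1$ independent quantities $v^{j_i+j_{2n+2-i}}$ ($1\le i\le n$) and $v^{2j_{n+1}}$, each of which can be specialised to an arbitrary integer power of $v$ by the choice of $\bfj$. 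Distinct $\la$'s then produce distinct monomials, and a standard multivariable Vandermonde argument supplies $|\La(n+1,r-|A|/2)|$ choices of $\bfj$ for which the coefficient matrix $(v^{\wla\centerdot\bfj})_{\la,\bfj}$ is invertible; consequently $V_A\subseteq\pi_r(\Ajn)$ and surjectivity follows. The only mildly nontrivial obstacle is this Vandermonde inversion in $n+1$ variables, but after extracting the shared factor $v^{j_{n+1}}$ it reduces to a routine product-of-Vandermondes computation.
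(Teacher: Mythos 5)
Your proof is correct and takes essentially the same route as the paper. The paper's argument is terse: it reduces to showing that, for each fixed $A\in\Xinl$ with $|A|\le 2r$, the span of $\{A(\bfj,r)\mid\bfj\in\Z^{2n+1}\}$ equals the span of $\{[A+\wla]\mid\la\in\La(n+1,r-\tfrac12|A|)\}$, and then declares this "clear from the definition" -- implicitly invoking the same Vandermonde-determinant argument already cited when establishing that $\mathfrak B$ is a basis. You have simply made explicit the two points the paper leaves tacit: that every $M\in\Xinr$ decomposes uniquely as $A+\wla$ (via the parity observation that $m_{n+1,n+1}$ is forced to be odd), and that the multivariable Vandermonde inversion indeed gives the reverse inclusion. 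Both fills are correct, so this is the same proof with the gaps spelled out.
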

\begin{proof}It suffice to prove that, for a fixed $A\in\Xinrl$,
$$\text{span}\{A(\bfj,r)\mid\bfj\in\Z^{2n+1}\}=\text{span}\{[A+\wla]\mid\la\in\La(n+1, r-\frac12|A|)\}.$$
This is clear from the definition of $A(\bfj,r)$ in \eqref{Ajr}.
\end{proof}
\section{Lifting Bao--Wang's Schur duality to the integral level}
In \cite[Thm.~6.27]{BW}, a $(\bfUjn, \bsH(B_r))$-duality via the tensor space $\Om^{\otimes r}$ is established; see Theorem~\ref{Bao-Wang}(3).  In this section, we will  define an algebra epimorphism  $\phi^\jmath_r:\bfUjn\to\SnrQ$ via the subalgebra $\Ajn$ and prove that $\phi^\jmath_r$ maps a Lusztig type form $U^\jmath(n)_\sZ$ of $\bfUjn$ onto the integral $q$-Schur algebra $\SnrZ$. We will compare $\phi^\jmath_r$ with the epimorphism $\rho^\jmath_r$ given in \eqref{rhojr} in next section.

\begin{theo}\label{UjtoA}
There is a $\mathbb Q(v)$-algebra epimorphism
$$ \phi^\jmath:\bfU^{\j}(n)\longrightarrow \Ajn$$
such that $e_{h}\mapsto E^{\theta}_{h,h+1}({\bf 0}),\;f_{h}\mapsto E^{\theta}_{h+1,h}({\bf 0}),\;d_{h}^{\pm1}\mapsto {O}({\bf \pm e_h})$, and $d^{\pm1}_{n+1}\mapsto v^{\mp1}{ O}({\pm \bfe_{n+1}})$ for all $1\leq h\leq n$.
\end{theo}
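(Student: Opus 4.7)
The plan is to define $\phi^\jmath$ on the generators as specified, verify that the images in $\Ajn$ satisfy each of the defining relations (iQG1)--(iQG6), and then conclude surjectivity directly from Theorem~\ref{th2}, which identifies precisely $O(\pm\bfe_i)$, $E^\theta_{h,h+1}(\bfl)$ and $E^\theta_{h+1,h}(\bfl)$ as a generating set of $\Ajn$.

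The toric relations (iQG1) and (iQG2) follow from Theorem~\ref{MF}(1). For (iQG1), the formula $O(\bfj)O(\bfj')=O(\bfj+\bfj')$ together with the identification of $O(\bfl)=\sum_{r\ge 0}\sum_{\la\in\La(n+1,r)}[\widetilde\la]$ with the unit of $\SnQ$ provides commutativity and inverses. For (iQG2), conjugating $E^\theta_{j,j+1}(\bfl)$ by $O(\pm\bfe_a)$ and using $\ro(E^\theta_{j,j+1})=\bfe_j+\bfe_{2n+2-j}$ and $\co(E^\theta_{j,j+1})=\bfe_{j+1}+\bfe_{2n+1-j}$ produces $v^{\delta_{a,j}-\delta_{a,j+1}}$ for $a\le n$; in the case $a=n+1$ the extra factor $v^{\mp 1}$ built into $\phi^\jmath(d_{n+1}^{\pm 1})$ cancels, and the expected exponent $-2\delta_{n,j}$ appears precisely because $\co(E^\theta_{n,n+1})\centerdot\bfe_{n+1}=2$. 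Relation (iQG3) (which excludes the boundary), together with the commutation and Serre relations (iQG4), (iQG5) for indices in $[1,n-1]$, involves no interaction with the node $n$: in this range the $\delta_{h,n}$-corrections in Theorem~\ref{MF}(2)(3) are inactive, and the arguments reduce, via the decomposition $E^\theta_{h,h\pm 1}=E_{h,h\pm 1}+E_{2n+1-h,2n+1-h\mp 1}$ into two disjoint rank-one matrices, to the classical type-$A$ computations of \cite[\S5.3]{BLM} (compare \cite[Thm.~13.27]{DDPW08}).

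The main obstacle lies in the boundary-node relations: the instances of (iQG4) and (iQG5) with one index equal to $n$, and above all the modified Serre relation (iQG6). For the boundary cases of (iQG4) and (iQG5) one applies Theorem~\ref{MF}(2)(3) with $h=n-1$ and $h=n$; the additional contributions $\delta_{h,n}\delta^{\le}_{p,n}$ in $\b_p(A,n)$ produce extra powers of $v$, but these enter symmetrically on both sides of the desired identities and cancel. The heart of the matter is (iQG6). The strategy is to compute both $E^\theta_{n,n+1}(\bfl)^2 E^\theta_{n+1,n}(\bfl)+E^\theta_{n+1,n}(\bfl) E^\theta_{n,n+1}(\bfl)^2$ and $[2]\,E^\theta_{n,n+1}(\bfl) E^\theta_{n+1,n}(\bfl) E^\theta_{n,n+1}(\bfl)$ by iterated use of Theorem~\ref{MF}(2)(3) with $h=n$ (where all $\delta_{h,n}=1$ adjustments and the $(1-v^{-2})^{-1}$-denominators of the middle summands become active), and to match their difference with $-[2]$ times the image of $e_n(vd_n d_{n+1}^{-1}+v^{-1}d_n^{-1}d_{n+1})$ under $\phi^\jmath$. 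Using $d_n\mapsto O(\bfe_n)$ and $d_{n+1}^{\pm 1}\mapsto v^{\mp 1}O(\pm\bfe_{n+1})$ together with Theorem~\ref{MF}(1), one finds $vd_n d_{n+1}^{-1}\mapsto v^2 O(\bfe_n-\bfe_{n+1})$ and $v^{-1}d_n^{-1}d_{n+1}\mapsto v^{-2} O(\bfe_{n+1}-\bfe_n)$. By Corollary~\ref{can proj} it suffices to check the equality at each level $r$, so after the expansion only finitely many terms appear, indexed by intermediate matrices supported on the entries around $(n,n),(n,n+1),(n+1,n),(n+1,n+1)$ (and their $\theta$-partners) and by $\bfj$-weights in $\{\bfl,\pm\al_n,\pm\al_n^-\}$. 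The key difficulty is the bookkeeping: one must group contributions by resulting matrix and weight, and show that the denominators $(1-v^{-2})^{-1}$ of the middle summands in Theorem~\ref{MF}(2)(3) combine, via a telescoping between $\al_n$ and $\al_n^-$, into the polynomial scalars $v^2$ and $v^{-2}$ that appear in the image of the Cartan-like term. Once (iQG6) and the remaining relations are verified, surjectivity is immediate from Theorem~\ref{th2}.
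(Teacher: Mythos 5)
Your proposal follows the paper's strategy exactly: define $\phi^\jmath$ on generators, verify (iQG1)--(iQG6) using Theorem~\ref{MF} (with (iQG6) as the crux, invoking the middle summands and their $(1-v^{-2})^{-1}$ denominators at $h=n$, and with $vd_nd_{n+1}^{-1}\mapsto v^2O(\bfe_n-\bfe_{n+1})$, $v^{-1}d_n^{-1}d_{n+1}\mapsto v^{-2}O(\bfe_{n+1}-\bfe_n)$ exactly as in the paper), then deduce surjectivity from Theorem~\ref{th2}. The paper carries out the (iQG6) bookkeeping explicitly via a coefficient-matching table for the first of the two relations (you sketch the symmetric second one), and it realises precisely the telescoping of the $\al_n$ and $\al_n^-$ contributions against the Cartan term that you anticipate; aside from a minor index slip in your decomposition (it should be $E^\theta_{h,h+1}=E_{h,h+1}+E_{2n+2-h,2n+1-h}$), your plan is the paper's proof.
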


\begin{proof}
We must prove that the relations (iQG1)--(iQG6) in Definition \ref{iQG} are all satisfied for 
$$e_h=E^{\theta}_{h,h+1}({\bf 0}),\;f_h= E^{\theta}_{h+1,h}({\bf 0}),\;d_h =O({\bfe_h}),\; d_{n+1}=v^{-1}O(\bfe_{n+1}).$$ 
 Since relations (iQG1)--(iQG5) are more or less the defining relations for quantum $\mathfrak{gl}_n$, by Remark \ref{GLnMF}, the proof is almost the same as the proof of \cite[Thm.~13.33]{DDPW08}. We now prove (iQG6). We only check the first relation here:
\begin{equation}\label{iQG6}
f_n^{2}e_n+e_nf_{n}^2=[2]\big(f_ne_nf_n-(vd_nd^{-1}_{n+1}+v^{-1}d_n^{-1}d_{n+1})f_n\big).
\end{equation}

First, compute $f_n^2e_n=E_{n+1,n}^{\theta}({\bf 0})^2 E_{n,n+1}^{\theta}({\bf 0})$. By  Theorem \ref{MF}(3), we have
$$\aligned
f_ne_n&=E_{n+1,n}^{\theta}({\bf 0})E_{n,n+1}^{\theta}({\bf 0})=v^{\b'_n(E_{n,n+1}^{\theta},n)}(E_{n,n+1}^{\theta}+E_{n+1,n}^{\theta})({\bf 0})\\
&\quad+\frac{v^{\b'_{n+1}(E_{n,n+1}^{\theta},n)}}{1-v^{-2}}\Big(v^{-2}(E_{n,n+1}^{\theta}-E_{n,n+1}^{\theta})({-\al_{n}})-(E_{n,n+1}^{\theta}-E_{n,n+1}^{\theta})({\al^-_{n}})\Big)\\
&=(E_{n,n+1}^{\theta}+E_{n+1,n}^{\theta})({\bf 0})+\frac{v^{-2}}{1-v^{-2}}{O}({-\al_{n}})-\frac{O({\al^-_{n}})}{1-v^{-2}}
\endaligned
$$
Further, we see that,
\begin{equation}
\aligned\label{need below}
E_{n+1,n}^{\theta}({\bf 0})&(E_{n,n+1}^{\theta}+E_{n+1,n}^{\theta})({\bf 0})=v^{\b'_n(E_{n,n+1}^{\theta}+E_{n+1,n}^{\theta},n)}\overline{[\![2]\!]}(E_{n,n+1}^{\theta}+2E_{n+1,n}^{\theta})({\bf 0})\\
&\quad+\frac{v^{\b'_{n+1}(E_{n,n+1}^{\theta}+E_{n+1,n}^{\theta},n)}}{1-v^{-2}}\Big(v^{-2}E_{n+1,n}^{\theta}({-\al_{n}})-E_{n+1,n}^{\theta}({\al^-_{n}})\Big)\\
&=v\overline{[\![2]\!]}(E_{n,n+1}^{\theta}+2E_{n+1,n}^{\theta})({\bf 0})+\frac{v^{-1}}{1-v^{-2}}E_{n+1,n}^{\theta}({-\al_{n}})- \frac{v}{1-v^{-2}}E_{n+1,n}^{\theta}({ \al^-_{n}}),\\
\endaligned
\end{equation}
and
$E_{n+1,n}^{\theta}({\bf 0}){O}({-\al_{n}})=v^{-1}E_{n+1,n}^{\theta}({-\al_{n}}),$
$
E_{n+1,n}^{\theta}({\bf 0}){O}({\al^-_{n}})=v^{-1}E_{n+1,n}^{\theta}({\al^-_{n}}).
$
Thus, we have
$$\aligned
f_n^2e_n&=E_{n+1,n}^{\theta}({\bf 0})^2 E_{n,n+1}^{\theta}({\bf 0})\\
&=v\overline{[\![2]\!]}(E_{n,n+1}^{\theta}+2E_{n+1,n}^{\theta})({\bf 0})+\frac{v^{-1}}{1-v^{-2}}E_{n+1,n}^{\theta}({-\al_{n}})- \frac{v}{1-v^{-2}}E_{n+1,n}^{\theta}({\al^-_{n}})\\
&+\frac{v^{-3}}{1-v^{-2}}E_{n+1,n}^{\theta}({-\al_{n}})-\frac{v^{-1}}{1-v^{-2}}E_{n+1,n}^{\theta}({\al^-_{n}}),
\endaligned$$

Second, compute $e_nf_n^2=E_{n,n+1}^{\theta}({\bf 0})E^{\theta}_{n+1,n}({\bf 0})^2$.
Since 
$E^{\theta}_{n+1,n}({\bf 0})^2=[2]^!  (2E^{\theta}_{n+1,n})({\bf 0})$, by Corollary \ref{cor1},
it follows from Theorem \ref{MF}(2) that
$$\aligned
e_nf_n^2&=E_{n,n+1}^{\theta}({\bf 0})E^{\theta}_{n+1,n}({\bf 0})^2=[2]^!E_{n,n+1}^{\theta}({\bf 0})(2E_{n+1,n}^{\theta})({\bf 0}) \\
&=[2]^!\Big(\frac{v^{\b_n(2E_{n+1,n}^{\theta},n)-1}}{1-v^{-2}}\big[(2E_{n+1,n}^{\theta}-E_{n+1,n}^{\theta})({\al_{n}}) -(2E_{n+1,n}^{\theta}-E_{n+1,n}^{\theta})({\al^-_{n}})\big]\\
&\quad+v^{\b_{n+1} (2E_{n+1,n}^{\theta},n)}(2E_{n+1,n}^{\theta}+E_{n,n+1}^{\theta})({\bf 0})+v^{\b_{n+2}(2E_{n+1,n}^{\theta},n)}(E_{n+1,n}^{\theta}+E_{n,n+2}^{\theta})({\bf 0})\Big)\\
&=[2]^!(E_{n+1,n}^{\theta}+E_{n,n+2}^{\theta})({\bf 0})+[2]^!v^{-2}(2E_{n+1,n}^{\theta}+E_{n,n+1}^{\theta})({\bf 0})\\
&\quad +\ [2]^!\frac{v^{-2}}{1-v^{-2}}E_{n+1,n}^{\theta}({ \al_{n}})-[2]^!\frac{v^{-2}}{1-v^{-2}}E_{n+1,n}^{\theta}({\al^-_{n}})\\
\endaligned$$

Finally, compute the right hand side $f_ne_nf_n-(vd_nd_{n+1}^{-1}+v^{-1}d_n^{-1}d_{n+1})f_n$. Similar to $e_nf_n^2$, 
$$\aligned
e_nf_n&=E_{n,n+1}^{\theta}({\bf 0})E_{n+1,n}^{\theta}({\bf 0})\\
&=E_{n,n+2}^{\theta}({\bf 0})+v^{-1}(E_{n+1,n}^{\theta}+E_{n,n+1}^{\theta})({\bf 0})+\frac{v^{-1}}{1-v^{-2}}\big({O}({\al_n})-{O}({\al^-_n})\big),
\endaligned$$
Further, we have
$$\aligned
E_{n+1,n}^{\theta}({\bf 0})E_{n,n+2}^{\theta}({\bf 0})
&=v^{\b'_{n+2}(E_{n,n+2}^{\theta},n)}E_{n+1,n+2}^{\theta}({{-\al_{n}}})+v^{\b'_n(E_{n,n+2}^{\theta},n)}(E_{n+1,n}^{\theta}+E_{n,n+2}^{\theta})({\bf 0})\\
&=E_{n+1,n}^{\theta}({ {-\al_{n}}})+(E_{n+1,n}^{\theta}+E_{n,n+2}^{\theta})({\bf 0}),\\
\endaligned$$
This together with \eqref{need below} and Theorem \ref{MF}(1)
gives
$$\aligned
f_ne_nf_n&=E_{n+1,n}^{\theta}({\bf 0})\Big(E_{n,n+2}^{\theta}({\bf 0})+v^{-1}(E_{n+1,n}^{\theta}+E_{n,n+1}^{\theta})({\bf 0})+\frac{v^{-1}}{1-v^{-2}}\big({O}({\al_n})-{O}({\al^-_n})\big)\Big)\\
&= E_{n+1,n}^{\theta}({{-\al_{n}}})+(E_{n+1,n}^{\theta}+E_{n,n+2}^{\theta})({\bf 0})\\
&\quad+\overline{[\![2]\!]}(E_{n,n+1}^{\theta}+2E_{n+1,n}^{\theta})({\bf 0})
+\frac{1}{1-v^{-2}}\big(v^{-2}E_{n+1,n}^{\theta}({ -\al_{n}})-
E_{n+1,n}^{\theta}({\al^-_{n}})\big)\\
&\quad+\frac{1}{1-v^{-2}}\big(E_{n+1,n}^{\theta}({\al_n})-v^{-2}E_{n+1,n}^{\theta}({ \al^-_{n}})\big).
\endaligned
$$
Again, by Theorem \ref{MF}(1),
$$\aligned
vd_{n}d_{n+1}^{-1}f_{n}+v^{-1}d_{n}^{-1}d_{n+1}f_{n}&=v^2{O}({\bfe_n-\bfe_{n+1}})E_{n+1,n}^{\theta}({\bf 0})+v^{-2} {O}(-\bfe_n+{\bfe_{n+1}})E_{n+1,n}^{\theta}({\bf 0})\\
&=E_{n+1,n}^{\theta}({\al_n})+E_{n+1,n}^{\theta}({\bf -\al_{n}}).
\endaligned
$$
Now \eqref{iQG6} follows from coefficient equating of both sides:
\begin{center}
\begin{tabular}{|cc|}
\hline
{\bf Term}&{\bf Coefficient}\\ \hline
$(E_{n,n+1}^{\theta}+2E_{n+1,n}^{\theta})({\bf 0})$&$ v\overline{[\![2]\!]}+[2]^!v^{-2}=\overline{[\![2]\!]}[2]. $\\
$(E_{n+1,n}^{\theta}+E_{n,n+2}^{\theta})({\bf 0})$&$[2]^!=[2]$\\
$E_{n+1,n}^{\theta}({-\al_{n}})$&
$\frac{v^{-1}}{1-v^{-2}}+\frac{v^{-3}}{1-v^{-2}}=[2](1+\frac{v^{-2}}{1-v^{-2}}-1)$\\
$E_{n+1,n}^{\theta}({\al_n})$&
$ [2]^!\frac{v^{-2}}{1-v^{-2}}=[2](\frac{1}{1-v^{-2}}-1)$\\
$E_{n+1,n}^{\theta}({\al^-_{n}})$&
$-\frac{v}{1-v^{-2}}-\frac{v^{-1}}{1-v^{-2}}-[2]^!\frac{v^{-2}}{1-v^{-2}}=[2](-\frac{1}{1-v^{-2}}-\frac{v^{-2}}{1-v^{-2}})$\\\hline
\end{tabular}
\end{center}
which can be checked easily.\end{proof}

Recall the canonical projection map $\pi_r:\Ajn\to\SnrQ$ in Corollary \ref{can proj}. This together with the above result gives the following.

\begin{coro}\label{phijr}There is an algebra epimorphism 
$$\phi^\jmath_r=\pi_r\circ\phi^\jmath:\bfUjn\lra \SnrQ.$$
\end{coro}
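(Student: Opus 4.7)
The statement is an essentially immediate corollary of the two preceding results, and the plan is simply to compose them. First, Theorem \ref{UjtoA} provides a $\Q(v)$-algebra epimorphism $\phi^\jmath:\bfUjn\to\Ajn$ sending the generators $e_h,f_h,d_a^{\pm1}$ to $E^\theta_{h,h+1}(\bfl)$, $E^\theta_{h+1,h}(\bfl)$, and (up to the normalising factor $v^{\mp1}$ when $a=n+1$) $O(\pm\bfe_a)$ inside the subalgebra $\Ajn\subseteq\SnQ$. Second, Corollary \ref{can proj} identifies the canonical projection $\pi_r:\Ajn\to\SnrQ$, which extracts the $r$-th component of a formal infinite series in $\prod_{s\geq0}\sS^\jmath(n,s)_{\Q(v)}$, as a $\Q(v)$-algebra epimorphism.

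The composition $\phi^\jmath_r:=\pi_r\circ\phi^\jmath$ is therefore automatically a $\Q(v)$-algebra homomorphism, and it is surjective because both factors are. No real obstacle arises here: all substantive work has already been carried out in Theorem \ref{UjtoA} (verification of the defining relations (iQG1)--(iQG6) of $\bfUjn$ against the multiplication formulas of Theorem \ref{MF}) and in Lemma \ref{TR} together with Theorem \ref{th2} (which assemble $\Ajn$ into an algebra and identify its image in each $\SnrQ$). If one wishes to record the images of generators explicitly, one simply applies $\pi_r$ to the formulas of Theorem \ref{UjtoA}, yielding $\phi^\jmath_r(e_h)=E^\theta_{h,h+1}(\bfl,r)$, $\phi^\jmath_r(f_h)=E^\theta_{h+1,h}(\bfl,r)$, $\phi^\jmath_r(d_h^{\pm1})=O(\pm\bfe_h,r)$ for $h\in[1,n]$, and $\phi^\jmath_r(d_{n+1}^{\pm1})=v^{\mp1}O(\pm\bfe_{n+1},r)$, which is the promised epimorphism onto the type $B$ $q$-Schur algebra.
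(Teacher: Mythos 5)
Your proposal is correct and matches the paper's approach exactly: the paper likewise obtains Corollary \ref{phijr} by composing the epimorphism $\phi^\jmath$ of Theorem \ref{UjtoA} with the canonical projection $\pi_r$ of Corollary \ref{can proj}. Recording the explicit images of the generators is a harmless (and accurate) addition not present in the paper's one-line derivation.
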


\begin{remark}
We remark that the algebra homomorphimsm $\phi^\jmath_r$ has been established in \cite[Prop.~3.1]{BKLW} (compare \cite[Lem. 3.12]{BKLW}).$^3$\footnote{${}^3$The notation in \cite[Lem. 3.12]{BKLW} has been twisted by the involution $\omega$ in Lemma \ref{omega}.} However, the proof there is geometric via a geometric setting of $\SnrZ$. See \cite{LL} for an algebraic approach involving type $B$ Hecke algebras of two parameters.
\end{remark}

We now use a Lusztig type form $U^\jmath(n)_\sZ$ for $\bfUjn$  and show that $\phi^\jmath_r$ restricts to a $\sZ$-algebra epimorphism.

Define
\begin{eqnarray*}
\Bbbk_{i}:=
\begin{cases}
O(\bfe_i), &\mbox{ if $1\leq i\leq n,$}\\[8pt]
v^{-1}O(\bfe_{n+1}),&\,\,\mbox{if $i=n+1.$}
\end{cases}
\end{eqnarray*} and define $\Bbbk_{i,r}=\pi_r(\Bbbk_i)$.
Recall the notations $\Big[{\Bbbk_{i,r};0\atop \la_i}\Big]:=\prod_{j=1}^{\la_i}\frac{\Bbbk_{i,r}v^{-j+1}-\Bbbk^{-1}_{i,r}v^{j-1}}{v^{j}-v^{-j}}$ at the end of \S1 and $\wla$ in \eqref{wla}.
\begin{lemm}\label{v^2}
For any $\lambda\in\La(n+1,r)$, we have in $\SnrZ$
\begin{equation}\label{eqq-1} \notag
\prod_{i=1}^{n}\begin{bmatrix}\Bbbk_{i,r};0\\\l_{i}\end{bmatrix}\cdot\begin{bmatrix}\Bbbk_{n+1,r};0\\ \l_{n+1}\end{bmatrix}_{v^2}=[\wla].
\end{equation}
\end{lemm}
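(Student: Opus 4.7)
My approach is to diagonalise both sides against the complete orthogonal system of idempotents $\{[\wmu]\mid\mu\in\La(n+1,r)\}$ in $\SnrZ$ and read off the unique surviving term.

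By \eqref{OE}, for $1\le i\le n$ one has $\Bbbk_{i,r}=\pi_r(O(\bfe_i))=\sum_{\mu}v^{\mu_i}[\wmu]$, while the $v^{-1}$ normalisation in the definition of $\Bbbk_{n+1}$ precisely cancels the $v^{+1}$ appearing for $i=n+1$ in \eqref{OE}, giving $\Bbbk_{n+1,r}=\sum_{\mu}v^{2\mu_{n+1}}[\wmu]$. By \eqref{idempotent}, the $[\wmu]$'s are pairwise orthogonal idempotents with $\sum_{\mu}[\wmu]=1$, so each $\Bbbk_{i,r}$ acts on $[\wmu]$ by the scalar $v^{\mu_i}$ (respectively $v^{2\mu_{n+1}}$ when $i=n+1$). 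Since $\left[\begin{matrix}K;0\\ t\end{matrix}\right]$ is a Laurent polynomial in $K$, its action on $[\wmu]$ is obtained by substituting the eigenvalue of $\Bbbk_{i,r}$ for $K$; a direct calculation yields $\left[\begin{matrix}\Bbbk_{i,r};0\\ \l_i\end{matrix}\right][\wmu]=\left[\begin{matrix}\mu_i\\ \l_i\end{matrix}\right][\wmu]$ for $i\le n$, and the $v^2$-subscript in the last factor is tailored so that the specialisation $K=v^{2\mu_{n+1}}$ lands exactly on the standard $v^2$-Gaussian binomial, giving $\left[\begin{matrix}\Bbbk_{n+1,r};0\\ \l_{n+1}\end{matrix}\right]_{v^2}[\wmu]=\left[\begin{matrix}\mu_{n+1}\\ \l_{n+1}\end{matrix}\right]_{v^2}[\wmu]$.

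Multiplying the entire product of binomials against $1=\sum_{\mu}[\wmu]$ and using orthogonality, the left-hand side becomes $\sum_{\mu\in\La(n+1,r)}\prod_{i=1}^{n}\left[\begin{matrix}\mu_i\\ \l_i\end{matrix}\right]\cdot\left[\begin{matrix}\mu_{n+1}\\ \l_{n+1}\end{matrix}\right]_{v^2}[\wmu]$. Each factor vanishes unless $\mu_i\ge\l_i$; since $\sum_i\mu_i=r=\sum_i\l_i$, the only surviving composition is $\mu=\l$, for which every binomial equals $1$. Thus the sum collapses to $[\wla]$, as desired. I anticipate no real obstacle; the only delicate point is the $v$-shift in the first step that ties the normalisation of $\Bbbk_{n+1}$ to the $v^2$-subscript in the final binomial, which is precisely what makes the identity come out cleanly.
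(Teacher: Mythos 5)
Your proof is correct and is essentially the same argument as in the paper: both rest on the fact that the $[\wmu]$ for $\mu\in\La(n+1,r)$ are pairwise orthogonal idempotents summing to $1$, on computing that $\Bbbk_{i,r}$ acts on $[\wmu]$ by the scalar $v^{\mu_i}$ (respectively $v^{2\mu_{n+1}}$), and on the vanishing of the Gaussian binomial $\left[\begin{smallmatrix}\mu_i\\\l_i\end{smallmatrix}\right]$ when $0\le\mu_i<\l_i$ together with the equal-weight constraint $\sum_i\mu_i=\sum_i\l_i=r$ to isolate the single term $\mu=\la$. The only cosmetic difference is that the paper first expands each binomial factor as $\sum_{j}\left[\begin{smallmatrix}j\\\l_i\end{smallmatrix}\right]\d_i(j)$ and then multiplies, while you diagonalise the full product against each $[\wmu]$ directly; the computations coincide term by term.
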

\begin{proof}We only outline the proof; missing details can be found in \cite[p.572]{DDPW08}.
For $1\leq i\leq n+1,\, 0\leq j\leq r$, let
\begin{equation*}
\d_{i}(j)=\sum_{\substack{\l\in \Lambda(n+1,r)\\ \l_{i}=j}}[\wla]
\end{equation*}
Then, by \eqref{OE}, we have $\sum^{r}_{j=0}v^{j}\,\d_{i}(j)=\Bbbk_{i,r}$ for $1\leq i\leq n$, and 
$$\Bbbk_{n+1,r}=v^{-1}\sum_{\l\in\La(n+1,r)}v^{2\l_{n+1}+1}[\wla]=\sum_{j=0}^rv^{2j}\d_{n+1}(j).$$
Thus, for $1\leq i\leq n$, 
$\begin{bmatrix}\Bbbk_{i,r};0\\\l_{i}\end{bmatrix}=\sum_{j\geq \l_{i}}\begin{bmatrix}{j}\\{\l_{i}}\end{bmatrix}\d_{i}(j)$, and
\begin{eqnarray*}
\begin{bmatrix}\Bbbk_{n+1,r};0\\\l_{n+1}\end{bmatrix}_{v^2}&=&\prod_{s=1}^{\l_{n+1}}\frac{\Bbbk_{n+1,r}v^{2(-s+1)}-k^{-1}_{n+1,r}v^{2(s-1)}}{v^{2s}-v^{-2s}}
=\prod_{s=1}^{\l_{n+1}}\big(\sum_{j=0}^{r}\frac{v^{2(j-s+1)}-v^{-2(j-s+1)}}{v^{2s}-v^{-2s}}\d_{n+1}(j)\big)\\
&=&\sum_{j=0}^{r}\big(\prod_{s=1}^{\l_{n+1}}\frac{v^{2(j-s+1)}-v^{-2(j-s+1)}}{v^{2s}-v^{-2s}}\big)\d_{n+1}(j)=\sum_{j\geq \l_{n+1}}\begin{bmatrix}{j}\\{\l_{n+1}}\end{bmatrix}_{v^2}\d_{n+1}(j).
\end{eqnarray*}
Hence,
\begin{eqnarray*}
\prod_{i=1}^{n}\begin{bmatrix}\Bbbk_{i,r};0\\
\l_{i}\end{bmatrix}\cdot\begin{bmatrix}\Bbbk_{n+1,r};0\\
\l_{n+1}\end{bmatrix}_{v^2}
=\sum_{j_{1}\geq \l_{1},\dots,j_{n+1}\geq \l_{n+1}}\bigg( \prod_{i=1}^{n}\begin{bmatrix}j_{i}\\{\l_i}\end{bmatrix}\cdot \begin{bmatrix}j_{n+1}\\{\l_{n+1}}\end{bmatrix}_{v^2}\bigg)\d_1(j_1)\dots\d_{n+1}(j_{n+1})=[\wla],
\end{eqnarray*}
as desired.
\end{proof}
Let $\UjnZ$ be the $\ZZ$-subalgebra generated by divided powers 
$$e_{i}^{(m)}:=\frac{e_i^{m}}{[m]^!},\; f_{i}^{(m)}:=\frac{f_i^{m}}{[m]^!},\;d_i,\;\begin{bmatrix}d_{i};0\\t\end{bmatrix}, d_{n+1}, \begin{bmatrix}d_{n+1};0\\t\end{bmatrix}_{v^2}$$
 for all $m,t\in\N$ and $1\leq i\leq n$.
 
 Recall the elements $\fkm^{A,\bfl}$ defined in \eqref{MB}. This is a product of $(a_{i,j}E^\theta_{h+1,h})(\bfl)$
for $1\leq j\leq h<i\leq 2n+1$ in the order defined in \eqref{order leq}. We now define $\sM^{A,\bfl}$
by replacing the factor $(a_{i,j}E^\theta_{h+1,h})(\bfl)$ for $h\leq n$ by $f_h^{(a_{i,j})}$ and replacing $(a_{i,j}E^\theta_{h+1,h})(\bfl)$ for $h> n$ by $e_{2n+1-h}^{(a_{i,j})}$.
Then $\sM^{A,\bfl}\in\UjnZ$.

For any $A\in\Xinl$ and $\l\in\N^{n+1}$, define elements in $\AjnZ$ and $\UjnZ$:
\begin{equation}\label{IB}
\fkm^{A,\l}=\bigg(\prod_{i=1}^{n}\begin{bmatrix}\Bbbk_{i};0\\\l_i\end{bmatrix}\cdot \begin{bmatrix}\Bbbk_{n+1};0\\\l_{n+1}\end{bmatrix}_{v^2}\bigg)\fkm^{A,\bfl},\quad
\sM^{A,\l}=\bigg(\prod_{i=1}^{n}\begin{bmatrix}d_{i};0\\\l_i\end{bmatrix}\cdot \begin{bmatrix}d_{n+1};0\\\l_{n+1}\end{bmatrix}_{v^2}\bigg)\sM^{A,\bfl}.
\end{equation}

\begin{theo}\label{ISW}The epimorphism $\phi^\jmath_r$ in Corollary \ref{phijr} induces by restriction
 a $\ZZ$-algebra epimorphism $\phi_r^\jmath: \UjnZ\longrightarrow \SnrZ$ such that
$$e_{h}^{(m)}\longmapsto (mE^\theta_{h,h+1})(\bfl,r),\; f_{h}^{(m)}\longmapsto (mE^\theta_{h+1,h})(\bfl,r),\;d_i\longmapsto \Bbbk_{i,r}.$$
\end{theo}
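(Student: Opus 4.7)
The plan is to establish the two claims encoded in Theorem~\ref{ISW}: that $\phi^\jmath_r$ maps $\UjnZ$ into $\SnrZ$, and that this restriction is surjective. The main tools are the monomial elements $\sM^{A,\lambda}\in\UjnZ$ from \eqref{IB}, together with the triangular expansion of Lemma~\ref{TR} and the identity of Lemma~\ref{v^2}.

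For integrality, the images $\phi^\jmath_r(e_h^{(m)})=(mE^\theta_{h,h+1})(\bfl,r)$ and the $f$-analogue follow directly from Corollary~\ref{cor1} and are patently in $\SnrZ$; also $\mathbb{k}_{i,r}\in\SnrZ$ by \eqref{OE}. It remains to check the toral bracket elements. Decomposing spectrally via \eqref{OE}, we have $\mathbb{k}_{i,r}=\sum_{\mu\in\Lambda(n+1,r)}v^{\widetilde{\mu}_i}[\widetilde{\mu}]$ for $i\le n$ and $\mathbb{k}_{n+1,r}=\sum_\mu v^{2\mu_{n+1}+1}[\widetilde{\mu}]$, the shift being absorbed by the $v^{-1}$ in the image of $d_{n+1}$. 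Since the $[\widetilde{\mu}]$ are orthogonal idempotents in $\SnrZ$, each of these brackets acts on $[\widetilde{\mu}]$ by an integral Gaussian binomial $\left[\begin{matrix}\widetilde{\mu}_i\\\lambda_i\end{matrix}\right]$ (or its $v^2$-analogue for $i=n+1$), yielding a $\sZ$-combination of basis elements, hence an element of $\SnrZ$.

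For surjectivity, applying Corollary~\ref{cor1} factor-by-factor shows $\phi^\jmath(\sM^{A,\bfl})=\fkm^{A,\bfl}$, and absorbing the toral brackets gives $\phi^\jmath(\sM^{A,\lambda})=\fkm^{A,\lambda}$ in $\AjnZ$. Projecting via $\pi_r$, using Lemma~\ref{v^2} to identify the bracket product with $[\widetilde{\lambda}]$, and invoking Lemma~\ref{TR},
\begin{equation*}
\phi^\jmath_r(\sM^{A,\lambda})=[\widetilde{\lambda}]\cdot\pi_r(\fkm^{A,\bfl})=[\widetilde{\lambda}]\sum_\mu [A+\widetilde{\mu}]+[\widetilde{\lambda}]\cdot(\text{lower terms}).
\end{equation*}
By \eqref{idempotent}, $[\widetilde{\lambda}][A+\widetilde{\mu}]=\delta_{\widetilde{\lambda},\,\ro(A)+\widetilde{\mu}}[A+\widetilde{\mu}]$, so choosing $\lambda$ with $\widetilde{\nu}:=\widetilde{\lambda}-\ro(A)\ge 0$ collapses the main sum to a single term, yielding
\begin{equation*}
\phi^\jmath_r(\sM^{A,\lambda})=[A+\widetilde{\nu}]+(\text{lower terms involving }[C]\text{ with }C\prec A+\widetilde{\nu}).
\end{equation*}
Since every $B\in\Xi_{2n+1,2r+1}$ decomposes uniquely as $B=A+\widetilde{\nu}$ with $A\in\Xinl$ and $\widetilde{\nu}=\diag(B)$, a downward induction on $\preceq$ shows each $[B]$ lies in $\phi^\jmath_r(\UjnZ)$, proving surjectivity onto $\SnrZ$.

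The principal obstacle is verifying that the ``lower terms'' produced by Lemma~\ref{TR}, after being re-expanded in the integral basis $\{[C]\}$ of $\SnrZ$ and filtered by left multiplication by $[\widetilde{\lambda}]$, involve only matrices $C$ strictly below $A+\widetilde{\nu}$ in the preorder $\preceq$, so that the triangular induction closes. This follows because $\preceq$ is defined by partial sums indexed by $(i,j)$ with $i<j$ and is hence insensitive to diagonal summands: if $B\prec A$ in $\Xinl$, then $B+\widetilde{\mu}'\prec A+\widetilde{\nu}$ for any $\widetilde{\mu}'$, $\widetilde{\nu}$, so the induction hypothesis applies and the argument closes.
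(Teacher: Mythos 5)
Your proof is correct and takes essentially the same route as the paper: both check integrality of the images of the generators of $\UjnZ$ (the paper does this by routing through the $\sZ$-form $\AjnZ$, you directly, but the computation is identical—your spectral-decomposition argument for $\bigl[\begin{smallmatrix}\Bbbk_{i,r};0\\ t\end{smallmatrix}\bigr]$ is exactly the content of Lemma~\ref{v^2}), and both prove surjectivity by showing that $\phi^\jmath_r(\sM^{A,\lambda})$ is unitriangular against $\{[B]\}_{B\in\Xi_{2n+1,2r+1}}$ with respect to $\preceq$. Your choice of $\lambda$ with $\widetilde\lambda=\ro(A)+\widetilde\nu$ reproduces the paper's $\fkm^{(B)}=\fkm^{B',\ro(B)}$ after the identification $B=A+\widetilde\nu$, $A=B'$; the one point worth stating explicitly is that the lower-term coefficients land in $\sZ$ not by inspection but because $\phi^\jmath_r(\sM^{A,\lambda})\in\SnrZ$ a priori, so integrality of the expansion in the $[C]$-basis is automatic.
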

\begin{proof} Let  $\AjnZ$ be the $\sZ$-subalgebra generated by 
$$(mE^\theta_{i,i+1})(\bfl),\; (mE^\theta_{i+1,i})(\bfl), \;\Bbbk_i, \begin{bmatrix}\Bbbk_{i};0\\t\end{bmatrix},\;\Bbbk_{n+1}, \begin{bmatrix}\Bbbk_{n+1};0\\t\end{bmatrix}_{v^2}$$
 for all $m,t\in\N$ and $1\leq i\leq n$. Since the epimorphism $\phi^\jmath$ in Theorem \ref{UjtoA} sends the generators for $\UjnZ$ onto the generators of $\AjnZ$, it follows that restricting to $\UjnZ$ results in a $\sZ$-algebra epimorphism $\phi^\jmath:\UjnZ\to\AjnZ$. On the other hand, the canonical projection $\pi_r:\Ajn\to\SnrQ$ sends the generators of $\AjnZ$ to elements in $\SnrZ$.
Thus, $\phi_r^\jmath=\pi_r\circ\phi^\jmath$ defines a $\sZ$-algebra homomorphism
$$\phi^\jmath_r:\UjnZ\lra\SnrZ.$$
It remains to prove that $\phi^\jmath_r$ is surjective. 

For $A\in\Xin$, let $A'$ is obtained from $A$ by replacing the diagonal entries with zeros and let
$$\fkm^{(A)}:=\fkm^{A',\ro(A)}.$$ 
Then, by definition, $\phi^\jmath(\sM^{(A)})=\fkm^{(A)}$. Now apply $\pi_r$ to $\fkm^{(A)}$. For $\wla=\ro(A)$,
by Lemma \ref{v^2} and \cite[Thm. 3.10]{BKLW} (cf. the proof of Lemma \ref{TR}), we have
 $$\aligned 
 \pi_r(\fkm^{(A)})&=
 \pi_r\bigg(\prod_{i=1}^{n}\begin{bmatrix}\Bbbk_{i};0\\\l_i\end{bmatrix}\cdot \begin{bmatrix}\Bbbk_{n+1};0\\\l_{n+1}\end{bmatrix}_{v^2}\bigg)\pi_r(\fkm^{A',\bfl}),\\
 & =[\ro(A)]\displaystyle\prod _{(j,h,i)\in(\mathscr T_{2n+1},\leq)}(a_{i,j}E^{\theta}_{h+1,h})({\bf 0},r)\\
 &=[A]+(\text{lower terms}).
 \endaligned$$
Hence, the set $\{ \pi_r(\fkm^{(A)})\mid A\in\Xinr\}$ spans $\SnrZ$. This proves the surjectivity of $\phi_r^\jmath$.
\end{proof}

\begin{remarks}\label{IMB}
 (1) Due to the integral nature, we may specialise $\sZ$ to any commutative ring $k$ to get a $k$-algebra epimorphism $$\phi^\jmath_{r, k}:U^\jmath(n)_k\to\SnrZ_k.$$ Thus, if $k$ is a field, the representation category of $\SnrZ_k$ is a full subcategory of that of the hyperalgebra $U^\jmath(n)_k$ of $\bfUjn$. 
  In this way we link representations of the $i$-quantum groups (or $i$-quantum hyperalgebras) $U^\jmath(n)_k$ with those of the Hecke algebras of of type $B$. 

(2) Consider now representations of finite orthogonal groups $G(q)=\GL_n(\mathbb F_q)$ in non-defining characteristics. Those involves in \eqref{UPB} are related to the unipotent principal block which can be determined through the $q$-Schur algebra defined in \eqref{UPB}. Theorem \ref{ISW} extends further this relation to $i$-quantum groups.  It would be conceivable that much part of the classical Dipper--James theory can be generalised to finite orthogonal groups.

(3) We remark that a similar relation in terms of a certain $i$-quantum coordinate algebra is developed in \cite{LNX}. See Remark \ref{DanN}(3).

(4) The set (see \eqref{IB})
$$\big\{d_1^{\tau_1}\cdots d_{n+1}^{\tau_{n+1}}\sM^{A,\l}\mid A\in\Xinl,\l\in\N^{n+1},\tau_i\in\{0,1\}\big\}$$  
forms a $\mathbb Q(v)$-basis for $\bfUjn$ (cf. \cite[Lem.~6.47]{DDPW08}. It is reasonable to believe that the set forms a $\sZ$-basis for the integral form
$\UjnZ$.

(5) The integral form of the modified $i$-quantum group $\dot\bfU^\jmath(n)$ and its $i$-canonical basis have already been studied in \cite{BKLW} and its appendix by Bao--Li--Wang.
\end{remarks}

\section{A new realisation of $\bfU^\jmath(n)$}
The main purpose in this section is to prove that the $\Q(v)$-algebra homomorphism $\phi^\jmath$ in Theorem \ref{UjtoA} is in fact an isomorphism. Thus, we obtain a new realisation for the $i$-quantum group $\bfUjn$ by explicitly presenting its regular representation in terms of a basis and multiplication formulas in Theorem \ref{MF}, i.e., the matrix representations for generators.

We explain the idea of the proof as follows. Let $\bsS(2n+1,r)=\sS(2n+1,r)_{\Q(v)}$ and
$\bsSjnr=\SnrQ$ be the $q$-Schur algebras over $\Q(v)$ of type $A$ and $B$, respectively.
By Theorem \ref{Bao-Wang}(1), the algebra epimorphisms $\rho_r:\bfU(\mathfrak{gl}_{2n+1})\to\bsS(2n+1,r)$ induce a $\Q(v)$-algebra monomorphism (see, e.g. \cite{DG})
$$\rho=\prod_{r\geq0}\rho_r:\bfU(\mathfrak{gl}_{2n+1})\lra\bsS(2n+1):=\prod_{r\geq0}\bsS(2n+1,r).$$
It is known from Theorem \ref{Bao-Wang}(2) that $\bsS^\jmath(n,r)\subset \bsS(2n+1,r)$. Thus, by the embedding $\iota$ in Lemma \ref{iota}, both $\bfUjn$ and $\sS^\jmath(n)$ are subalgebras of $\bfU(\mathfrak{gl}_{2n+1})$ (via $\iota$) and $\bsS(2n+1)$, respectively. The idea of proving that $\phi^\jmath$ is injective is to show that $\phi^\jmath$ is the restriction $\rho^\jmath$ of $\rho$ to the image of $\iota$. In other words, we need to prove that $\phi^\jmath$ coincides with $\rho^\jmath$ via the isomorphism given in Theorem \ref{Bao-Wang}(2).
Thus, we must prove that the action of $\bfUjn$ on $\Om^{\otimes r}$ via $\iota$ coincides with the action of $\bfUjn$ on $\Om^{\otimes r}$ via
$\phi^\jmath$.

We need some preparation. There are two cases to consider.

If $n\geq r$, then the basis element $e_\emptyset:=[\diag(\emptyset)]\in\SnrZ$ is an idempotent, where
$$\emptyset=(\underbrace{1,\ldots,1}_r,\underbrace{0,\ldots,0}_{n-r},1,\underbrace{0\ldots,0}_{n-r},\underbrace{1,\ldots,1}_r)\in\N^{2n+1},$$
and $e_\emptyset\SnrZ e_\emptyset\cong \sH(B_r)$, $\SnrZ e_\emptyset\cong\sT(n,r)$ (see \eqref{Snr}). This gives an $\bsSjnr$-$\bsH(B_r)$-bimodule structure on $\bsSjnr e_{\emptyset}$. On the other hand, the tensor space $\Om^{\otimes r}$ is an $\bsSjnr$-$\bsH(B_r)$-bimodule via \eqref{Sr action} and \eqref{Br action}.
Moreover, there is an $\bsSjnr$-$\bsH(B_r)$-bimodule isomorphism
$$\eta_r:  \Om^{\otimes r}\lra \bsSjnr e_{\emptyset},\;\;\om_\bsi\longmapsto  [A_\bsi],$$
where $\bsi=(i_1,i_2,\dots,i_r)\in I(2n+1,r)$ and $A_\bsi=(a_{k,l})\in\Xinr$ defined for $N=2n+1$ by
\begin{equation}\label{Ai}
a_{k,l}=
\begin{cases}
\delta_{k,i_l}, &\text{ if }l\in[1,r];\\
\delta_{k,n+1} &\text{ if }l=n+1,\\
a_{N+1-k,N+1-l} &\text{ if } l\in[N+1-r, N]\\
0, &\text{ for the remaining columns.}\\
\end{cases}
\end{equation}
Note that $\co(A_\bsi)=(1^r,0^{n-r},1,0^{n-r},1^r)=\emptyset$.

We remark that the isomorphism $\eta_r$ is given in \cite{BKLW} (compare \cite[(2.9),(2.10)]{BKLW} with \cite[(4.11),(4.12)]{BKLW}, where $\tilde e_\bsi$ is the $[A_\bsi]$ here and ${\tt v}_\bsi$ is the $\om_\bsi$ here.

If $n<r$, then we may identify $\Xinr$ as a subset of $\Xirr$ via the following embedding:
\begin{equation}\label{Acirc}
\Xinr\lra \Xirr,\;\;A=\begin{pmatrix}X&|&Y\\ \text{---}&\cdot&\text{---}\\Y'&|&X'\end{pmatrix}
\longmapsto A^\circ=\begin{pmatrix}X&0&|&0&Y\\0&0&0&0&0\\ \text{---}&0&\cdot&0&\text{---}\\
0&0&0&0&0\\Y'&0&|&0&X'\end{pmatrix},
\end{equation}
where $X,X',Y,Y'$ are $n\times n$ matrices, ---$\cdot$--- and ${|\atop{\overset\cdot |}}$ represent the $n+1$st row and column of $A$, and the zeros in $A^\circ$ represent zero matrices of appropriate sizes. 
Thus, if $n<r$, we may regard $\SnrZ$ as a centraliser subalgebra of $\SrrZ$ via the induces embedding
$[A]\mapsto[A^\circ]$.

The embedding $[A]\mapsto[A^\circ]$ induces an embedding 
$$\wLa(n+1,r)=\{\ro(A)\mid A\in\Xinr\}\lra\wLa(r+1,r),\ro(A)\longmapsto\ro(A)^\circ:=\ro(A^\circ).$$
Let $f=\sum_{\l\in\La(n+1,r)}[\wla^\circ]$. Then there is an algebra isomorphism $f\SrrZ f\cong\SnrZ$.
This induces an $\bsSjnr$-$\bsH(B_r)$-bimodule isomorphism
$$\eta_r: \Om^{\otimes r} \lra f\bsS^\jmath(r,r) e_{\emptyset},\;\;\om_\bsi\longmapsto[A_\bsi].$$
Thus, as stated in Theorem \ref{Bao-Wang}(2), 
 $\eta_r$ induces an $\Q(v)$-algebra isomorphism in both cases:
$$\widetilde\eta_r:\End_{\bsH(B_r)}(\Om^{\otimes r})\lra\bsSjnr.$$
Recall the automorphism $\om$ in Lemma \ref{omega}.

\begin{theo}\label{new realn} The $\Q(v)$-algebra epimorphism $\phi_r^\jmath\circ \om:\bfUjn\lra\bsSjnr$
factors through the epimorphism $\rho_r^\jmath=\rho_r\circ\iota:\bfUjn\lra \End_{\bsH(B_r)}(\Om^{\otimes r})$, that is, $\widetilde\eta_r\circ\rho_r^\jmath=\phi_r^\jmath\circ\om$. Hence, the $\Q(v)$-algebra epimorphism
$$\phi^\jmath=\Pi_{r\geq0}\phi_r^\jmath:\bfUjn\lra\Ajn\subset\prod_{r\geq0}\bsS^\jmath(n,r)$$ is an isomorphism.
\end{theo}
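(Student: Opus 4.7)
I would deduce the second assertion of the theorem formally from the first. Taking the product over $r$ of the identity $\widetilde\eta_r\circ\rho_r^\jmath=\phi_r^\jmath\circ\omega$ yields $\widetilde\eta\circ\rho^\jmath=\phi^\jmath\circ\omega$, where $\widetilde\eta=\prod_r\widetilde\eta_r$ is an isomorphism. The injectivity of $\rho=\prod_r\rho_r$ on $\bfU(\mathfrak{gl}_{2n+1})$, cited in the preamble of this section, combined with the injectivity of $\iota$ in Lemma \ref{iota}, forces $\rho^\jmath=\rho\circ\iota$ to be injective; since $\omega$ is an algebra automorphism by Lemma \ref{omega}, $\phi^\jmath$ is then also injective, and it is already surjective onto $\Ajn$ by Theorem \ref{UjtoA}.

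The whole theorem therefore reduces to the identity $\widetilde\eta_r\circ\rho_r^\jmath=\phi_r^\jmath\circ\omega$, and since both sides are $\Q(v)$-algebra homomorphisms, it suffices to verify it on the generators $e_h,f_h,d_a^{\pm 1}$ of $\bfUjn$. I would evaluate both sides on an arbitrary basis vector $\om_\bsi\in\Om^{\otimes r}$, or equivalently under $\eta_r$ on the image $[A_\bsi]\in\bsSjnr e_{\emptyset}$. For the Cartan generators the verification is immediate: $\iota(d_a)$ is a monomial in $K_j^{-1}$'s that acts diagonally on $\om_\bsi$ by a power of $v$ recording the multiplicities of $a$ and $2n+2-a$ in $\bsi$, while $\phi_r^\jmath(\omega(d_a))$ is essentially $O(-\mathbf e_a,r)$ (with a $v^{-1}$-twist at $a=n+1$), which acts on $[A_\bsi]$ via the idempotent relation \eqref{idempotent} by $v^{-\ro(A_\bsi)_a}[A_\bsi]$. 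A direct comparison of the two exponents using the explicit description \eqref{Ai} of $A_\bsi$ finishes the case.

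The substantive step is for the Chevalley generators. For $e_h$, the left-hand side is the action of $\iota(e_h)=F_h+\wK_h^{-1}E_{2n+1-h}$ on $\om_\bsi$, computed from the iterated coproduct \eqref{Dr-1} and the natural action \eqref{NR}; it produces a sum over positions $j\in[1,r]$ with $i_j\in\{h,h+1,2n+1-h,2n+2-h\}$, each term carrying a definite $v$-weight from a tail of $\wK_h^{\pm 1}$'s. The right-hand side is $\phi_r^\jmath(\omega(e_h))=\phi_r^\jmath(f_h)=E^\theta_{h+1,h}(\mathbf 0,r)$, and $E^\theta_{h+1,h}(\mathbf 0,r)\cdot[A_\bsi]$ expands by Theorem \ref{MF}(3) with $\bfj=\mathbf 0$. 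Because $A_\bsi$ is extremely sparse by \eqref{Ai}, each of the four sums in Theorem \ref{MF}(3) collapses to a handful of terms that match the nonzero comultiplication positions bijectively, and the coefficients agree through an explicit identification of the $\beta'_p$-exponents of \eqref{beta} with the comultiplication exponents, together with the quantum-number identity $\overline{[\![1]\!]}=v$. The case of $f_h$ is entirely parallel, pairing Theorem \ref{MF}(2) against $\iota(f_h)=E_h\wK_{2n+1-h}+F_{2n+1-h}$.

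The main obstacle is the specialisation $h=n$: here $\iota(e_n)$ acts across the central column $n+1$ on tensors in which $\om_{n+1}$ may occur, and Theorem \ref{MF}(3) contributes the extra term with coefficient $\delta^\leq_{1,a_{n,n+1}}(1-v^{-2})^{-1}$ and the $v^{\pm\delta_{h,n}}$ refinements. Handling this case requires careful tracking of the always-present entry $a_{n+1,n+1}=1$ in the middle column of $A_\bsi$ and reconciling the $v$-powers produced by the $(1-v^{-2})^{-1}$ denominator with the contributions of the $E_{n+1}$-summand of $\iota(e_n)$. Once this case is settled, the remaining generator checks are variants of the pattern already established, and Theorem \ref{new realn} follows.
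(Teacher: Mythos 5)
Your overall strategy coincides with the paper's: deduce injectivity of $\phi^\jmath$ formally from the identity $\widetilde\eta_r\circ\rho_r^\jmath=\phi_r^\jmath\circ\omega$, then verify that identity on the generators $d_a^{\pm 1}, e_h, f_h$ by comparing the coproduct action on $\om_\bsi$ with the Schur-algebra action on $[A_\bsi]$. The Cartan case you describe is exactly what the paper does.

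For the Chevalley case, however, there is a type mismatch in your plan: Theorem~\ref{MF}(3) computes a product $E^\theta_{h+1,h}(\mathbf 0)\cdot A(\mathbf j)$ in the $A(\mathbf j)$-basis, whereas $[A_\bsi]$ is a single element of the $[A]$-basis, so ``$E^\theta_{h+1,h}(\mathbf 0,r)\cdot[A_\bsi]$ expands by Theorem \ref{MF}(3)'' is not literally applicable. The paper instead stays at the $[A]$-level: it unfolds $E^\theta_{h+1,h}(\mathbf 0,r)=\sum_\la[E^\theta_{h+1,h}+\wla]$, uses the idempotent relation \eqref{idempotent} to keep a single term $[E^\theta_{h+1,h}+\wmu][A_\bsi]$, and then applies Lemma~\ref{key lem}(b). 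This is decisively simpler, because $A_\bsi$ defined by \eqref{Ai} has at most one nonzero entry in each non-central column; in particular whenever $a_{h,l}\geq 1$ one has $a_{h+1,l}=0$, so every quantum coefficient $\overline{[\![a_{h+1,l}+1]\!]}=\overline{[\![1]\!]}$ equals $1$ (not $v$, as you wrote), and none of the $(1-v^{-2})^{-1}$ fractions of Theorem~\ref{MF}(3) ever enter the computation. As a consequence, $h=n$ is not an obstacle of the kind you anticipate; in the paper's route it shows up only as a clean $\delta_{h,n}$ correction coming from the always-present entry $a_{n+1,n+1}=1$ and matching $\delta_{N-h,h+1}=\delta_{h,n}$ on the coproduct side. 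The real bookkeeping effort lies in reconciling the exponents $\beta'_l(A_\bsi,h)$ and $\beta'_{N+1-l}(A_\bsi,h)$ with the $\wK_h^{\pm1}$-tail exponents from \eqref{Dr-1}, and in splitting the $p$-sum into $p\in[1,r]$ and $p\in[N-r+1,N]$ via the involution $l\mapsto N+1-l$, which you do not spell out. If you replace the reference to Theorem~\ref{MF}(3) by Lemma~\ref{key lem}(b) and carry out that exponent identification, your outline becomes the paper's proof.
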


\begin{proof} The commutative relations $\widetilde\eta_r\circ\rho_r^\jmath=\phi_r^\jmath\circ\om$ for all $r\geq0$ implies that the following  diagram commutes:
\begin{center}
\begin{tikzpicture}[scale=1.5]
\fill(-.3,1.5) node {$\bfUjn$};
\fill(1,1.7) node {$\rho^\jmath$};
\fill(1,.2) node {$\phi^\jmath$};
\fill(-.5,.8) node {$\om$};
\fill(3.2,.8) node {$\widetilde\eta$};
\fill(3.1,1.5) node {$\prod_{r\geq0}\text{End}_{\bsH(B_r)}(\Om^{\otimes r})$};
\fill(-.3,0) node {$\bfUjn$};
\fill(3.4,0) node {$\Ajn\subset\prod_{r\geq0}\bsSjnr$};
\draw[->](0.2,1.5) -- (1.8,1.5);
\draw[->](0.2,0) -- (1.8,0);
\draw[<-](-.3,.2) -- (-.3,1.3);
\draw[<-](3,.3) -- (3,1.3);
\end{tikzpicture}
\end{center}
where $\widetilde\eta=\prod_{r\geq0}\widetilde\eta_r$.
Since $\rho=\Pi_{r\geq0}\rho_r:\bfU(\mathfrak{gl}_{2n+1})\to\prod_{r\geq0}\text{End}_{\bsH(\fS_r)}(\Om^{\otimes r})$ is a monomorphism, it follows that the $\rho^\jmath$ is injective. The commutative diagram shows that $\phi^\jmath$ is injective. Hence, $\phi^\jmath$ is an isomorphism.

It remains to prove $\widetilde\eta_r\circ\rho_r^\jmath=\phi_r^\jmath\circ\om$.
We simply compare the actions of
 $\iota(d_i),\iota(e_h),\iota(f_h)$ on $\om_\bsi=\om_{i_1}\otimes \om_{i_2}\otimes\dots\otimes \om_{i_{r}}$, respectively, 
with the actions  on $[A_\bsi]$ of 
$$\pi_r\phi^\jmath\om(d_i)=O(-\bfe_i,r),\;\;  \pi_r\phi^\jmath\om(e_h)=E^\theta_{h+1,h}(\bfl,r),\;\; \pi_r\phi^\jmath\om(f_h)=E^\theta_{h,h+1}(\bfl,r),$$
where $A_\bsi$ is defined in \eqref{Ai}. By the embedding in \eqref{Acirc}, it suffices to assume that $n\geq r$.
Let $N=2n+1$ as usual.

For $1\leq h\leq n$ and $\bsi=(i_1,\ldots,i_r)\in I(N,r)$ as in \eqref{Inr}, we have by \eqref{NR},
\begin{eqnarray*}
\iota(d_h).(\om_{i_1}\otimes \om_{i_2}\otimes\dots\otimes \om_{i_{r}})&=&K^{-1}_{h}K^{-1}_{N+1-h}.\om_{i_1}\otimes \dots \otimes K^{-1}_{h}K^{-1}_{N+1-h}.\om_{i_r}\\
&=& v^{g_h}\om_{i_1}\otimes \om_{i_2}\otimes\dots\otimes \om_{i_{r}}
\end{eqnarray*}
where $g_h=-|\{k\mid 1\leq k\leq r,i_k=h\}|-|\{k\mid 1\leq k\leq r,i_k=N+1-h\}|$.\\

On the other hand, 
$$
O(-\bfe_h,r).[A_\bsi]=\sum_{\l\in\Lambda(n+1,r)}v^{-\l_h}[\wla]\cdot[A_\bsi]=v^{-\ro(A_\bsi)_h}[\ro(A_\bsi)][A_\bsi]=v^{-\ro(A_\bsi)_h}[A_\bsi],$$
where, by \eqref{Ai} and noting $h\neq n+1$,
\begin{eqnarray*}
\ro(A_\bsi)_h&=&|\{l\mid l\in[1,r],a_{h,l}=1=\delta_{h,i_l}\}\cup\{l\mid l\in[N-r+1,N],a_{h,l}=1\}|\\
&=&|\{l\mid l\in[1,r],i_l=h\}\cup\{ l\mid l\in[N-r+1,N],i_{N+1-l}=N+1-h\}|\\
&=&|\{k\mid 1\leq k\leq r,i_k=h\}|+|\{k\mid 1\leq k\leq r,i_{k}=N+1-h\}|=-g_h\\
\end{eqnarray*}
Now, for $h=n+1$,
\begin{eqnarray*}
\iota(d_{n+1}).(\om_{i_1}\otimes \om_{i_2}\otimes\dots\otimes \om_{i_{r}})&=&v^{-1}\underbrace{K^{-2}_{n+1}.\om_{i_1}\otimes \dots \otimes K^{-2}_{n+1}.\om_{i_r}}_r\\
&=& v^{g_{n+1}}\om_{i_1}\otimes \om_{i_2}\otimes\dots\otimes \om_{i_{r}}
\end{eqnarray*}
where $g_{n+1}=-1-2|\{k\mid 1\leq k\leq r,i_k=n+1\}|$. But
$$O(-\bfe_{n+1},r).[A_\bsi]=\sum_{\l\in\Lambda(n+1,r)}v^{-\wla_{n+1}}[\wla]\cdot[A_\bsi]=v^{-\ro(A_\bsi)_{n+1}}[\ro(A_\bsi)][A_\bsi]=v^{-\ro(A_\bsi)_{n+1}}[A_\bsi],$$
where, by \eqref{Ai},
\begin{eqnarray*}
\ro(A_\bsi)_{n+1}&=&2|\{k\mid 1\leq k\leq r,a_{n+1,k}=\delta_{n+1,i_k}=1\}|+a_{n+1,n+1}\\
&=&2|\{k\mid 1\leq k\leq r,i_k=n+1\}|+1=-g_{n+1}.
\end{eqnarray*}
This proves $\widetilde\eta_r\circ\rho_r^\jmath(d_i)=\phi_r^\jmath\circ\om(d_i)$ for all $i=1,2,\ldots,n+1$.

We now use the short notation $\om_{i_1}\om_{i_2}\cdots\om_{i_r}$ for a tensor product $\om_{i_1}\otimes \om_{i_2}\otimes\dots\otimes \om_{i_{r}}$. By Lemma \ref{iota},  \eqref{Dr-1}, and noting $\wK_i=K_iK_{i+1}^{-1}$,
we have
\begin{equation}\label{e_h}
\aligned
\iota(e_h).&(\om_{i_1} \om_{i_2}\dots \om_{i_{r}})=(F_h+\wK_h^{-1}E_{N-h}).(\om_{i_1} \om_{i_2}\dots \om_{i_{r}})\\
&\ =\ \sum_{l=1}^{r} \wK_h^{-1}\om_{i_1}\cdots \wK_{h}^{-1}\om_{i_{l-1}}\cdot F_h\om_{i_{l}} \cdot\om_{i_{l+1}}\dots \om_{i_r}\\
&\quad +\ \sum_{l=1}^{r}\wK^{-1}_{h}\om_{i_{1}}\cdots \wK_{h}^{-1}\om_{i_{l-1}} \wK^{-1}_{h}\cdot E_{N-h}\om_{i_{l}}\cdot\wK^{-1}_{h}\wK_{N-h}\om_{i_{l+1}}\dots \wK^{-1}_{h}\wK_{N-h}\om_{i_r}\\
&=\sum_{l=1\atop i_l=h}^{r} v^{f_1(l)}\om_{i_1}\dots \om_{i_{l-1}} \om_{h+1} \om_{i_{l+1}}\dots  \om_{i_r}\\
&\quad + \sum_{l=1\atop i_l=N-h+1}^{r}(v^{f_1(l)}\om_{i_{1}}\dots \om_{i_{l-1}}) \cdot(v^{\delta_{N-h,h+1}-\delta_{N-h,h}}\om_{N-h})\cdot (v^{f_2(l)}\om_{i_{l+1}}\dots \om_{i_r}),\\
\endaligned\end{equation}
where $f_1(l)=|\{k\mid 1\leq k<l,i_k=h+1\}|-|\{k\mid 1\leq k< l,i_k=h\}|$ and
$$\aligned
      f_2(l)&=-|\{k\mid l< k\leq r,i_k=N-h+1\}|+|\{k\mid l< k\leq r,i_k=N-h\}|\\
      &\quad+\ |\{k\mid l< k\leq r,i_k=h+1\}|-|\{k\mid l< k\leq r,i_k=h\}|.\endaligned$$
Thus, since $\delta_{N-h,h+1}=\delta_{h,n}$, $\delta_{N-h,h}=0$, and the summands in the second sum survive only when $i_l=N+1-h$, we may assume $i_l\neq h, h+1$ and
$$\aligned
f_1(l)+f_2(l)&+\delta_{N-h,h+1}-\delta_{N-h,h}\\
&=-|\{k\mid l< k\leq r,i_k=N-h+1\}|+|\{k\mid l< k\leq r,i_k=N-h\}|\\
      &\quad+\ |\{k\mid 1\leq k\leq r,i_k=h+1\}|-|\{k\mid 1\leq k\leq r,i_k=h\}|+\delta_{h,n}.
\endaligned$$
On the other hand, for the same $\bsi$ with $A_\bsi=(a_{k,l})$ and $\wmu=\ro(A_\bsi)-(\bfe_{h}+\bfe_{N-h+1})$, since $a_{h,l}=1$ forces $a_{h+1,l}=0$ and $a_{k,n+1}=0$ for all $n+1\neq k\in[1,N]$ by \eqref{Ai},
it follows that 
\begin{eqnarray*}
E^{\theta}_{h+1,h}({\bf 0},r).[A_\bsi]&=&\sum_{\l\in\Lambda(n+1,r-1)}[E^{\theta}_{h+1,h}+\wla]\cdot[A_\bsi]=[E^{\theta}_{h+1,h}+\wmu][A_\bsi]\\
&=&\sum_{l\in[1,N],a_{h,l}\geq 1}v^{\b'_l(A_\bsi,h)}[A_\bsi-E^{\theta}_{h,l}+E^{\theta}_{h+1,l}]\\
&=&\sum_{\substack{l\in[1,r]\\a_{h,l}=\delta_{h,i_l}=1}}v^{\b'_l(A_\bsi,h)}[A_\bsi-E^{\theta}_{h,l}+E^{\theta}_{h+1,l}]\\
&&\ +\ \sum_{\substack{l\in[N-r+1,N]\\a_{h,l}=\delta_{N+1-h,i_{N+1-l}}=1}}v^{\b'_l(A_\bsi,h)}[A_\bsi-E^{\theta}_{h,l}+E^{\theta}_{h+1,l}]\\
\end{eqnarray*}
where $\b'_l(A_\bsi,h)=\sum_{k\leq l}a_{h+1,k}-\sum_{k<l}a_{h,k}$. Since the bijection from $[N-r+1,N]$ to $[1,r]$  sending $l$ to $l'=N+1-l$ permutes the summands in the second sum, it follows that
\begin{eqnarray*}
E^{\theta}_{h+1,h}({\bf 0},r).[A_\bsi]&=&\sum_{\substack{l\in[1,r]\\i_l=h}}v^{\b'_l(A_\bsi,h)}[A_\bsi-E^{\theta}_{h,l}+E^{\theta}_{h+1,l}]\\
&&\ +\ \sum_{\substack{l\in[1,r]\\i_{l}=N+1-h}}v^{\b'_{N+1-l}(A_\bsi,h)}[A_\bsi-E^{\theta}_{h,N+1-l}+E^{\theta}_{h+1,N+1-l}]\\
\end{eqnarray*}
We now compare this with \eqref{e_h} via $\eta_r$.
Since, for $i_l=h$, $$\eta_r(\om_{i_1}\cdots\om_{i_{l-1}}\om_{h+1}\om_{i_{l+1}}\cdots\om_{i_r})=[A_\bsi-E^\theta_{h,l}+E^\theta_{h+1,l}]$$ and, for $i_l=N+1-h$, 
$$\eta_r(\om_{i_1}\cdots\om_{i_{l-1}}\om_{N-h}\om_{i_{l+1}}\cdots\om_{i_r})=[A_\bsi-E^\theta_{N+1-h,l}+E^\theta_{N-h,l}]=[A_\bsi-E^\theta_{h,N+1-l}+E^\theta_{h+1,N+1-l}],$$
 it remains to prove that
\begin{equation}\label{CC}
\aligned
\b'_l(A_\bsi,h)&=f_1(l),\qquad \text{ if }i_l=h;\\
\b'_{N+1-l}(A_\bsi,h)&=f_1(l)+f_2(l)+\delta_{N-h,h+1}-\delta_{N-h,h},\quad\text{ if }i_l=N+1-h.
\endaligned
\end{equation}
The first equation is clear since
\begin{eqnarray*}
\b'_l(A_\bsi,h)&=&\sum_{k\leq l}a_{h+1,k}-\sum_{k< l }a_{h,k}\\
&=&\,|\{k\mid 1\leq k\leq l,a_{\tilde{h}+1,k}=\delta_{h+1,i_{k}}=1\}|-
 |\{k\mid 1\leq k<l,a_{h,k}=\delta_{h,i_{k}}=1\}|\\
&=&\,|\{k\mid 1\leq k< l,i_{k}=h+1\}|-
 |\{k\mid 1\leq k<l,i_{k}=h\}|\;\;\;(\text{as }i_l=h)\\
 &=&f_1(l).
\end{eqnarray*}

For the second, since $l\in[1,r]\iff N+1-l\in[N-r+1,N]$, by \eqref{Ai},
$$\aligned
&\quad\b'_{N+1-l}(A_\bsi,h)=\sum_{1\leq k\leq N+1-l}a_{h+1,k}-\sum_{1\leq k< N+1-l }a_{h,k}\\
&=|\{k\mid 1\leq k\leq r,a_{h+1,k}=1\}|+\delta_{h,n}+|\{k\mid N-r+1\leq k\leq N+1-l,a_{h+1,k}=1\}|
\\
&\quad-
|\{k\mid 1\leq k\leq r,a_{h,k}=1\}|-|\{k\mid N-r+1\leq k< N+1-l,a_{h,k}=1\}|\\
&=|\{k\mid 1\leq k\leq r,i_{k}=h+1\}|+\delta_{h,n}+|\{k\mid N-r+1\leq k\leq N+1-l,i_{N+1-k}=N-h\}|
\\
&\quad-|\{k\mid 1\leq k\leq r,i_{k}=h\}|-|\{k\mid N-r+1\leq k< N+1-l,i_{N+1-k}=N+1-h\}|\\
&=
|\{k\mid 1\leq k\leq r,i_{k}=h+1\}|+\delta_{h,n}+|\{k\mid l\leq k\leq r,i_{k}=N-h\}|\\
&\quad\,-|\{k\mid 1\leq k\leq r,i_{k}=h\}|-|\{k\mid l< k\leq r,i_{k}=N+1-h\}|\\
&=f_1(l)+f_2(l)+\delta_{N-h,h+1}-\delta_{N-h,h},
\endaligned$$
proving \eqref{CC}. This proves $\widetilde\eta_r\circ\rho_r^\jmath(e_h)=\phi_r^\jmath\circ\om(e_h)$ for all $i=1,2,\ldots,n$. The proof of $\widetilde\eta_r\circ\rho_r^\jmath(f_h)=\phi_r^\jmath\circ\om(f_h)$ is similar
(and symmetric).
\end{proof}
By abuse of notation, let $A(\bfj)=(\phi^\jmath)^{-1}A(\bfj)$. Then
the basis $\{A(\bfj)\}_{A\in\Xinrl,\bfj\in\Z^{2n+1}}$ for $\Ajn$ gives rise to a new basis for $\bfUjn$. We now have a new presentation for $\bfUjn$ (cf. Theorem \ref{th2}).
\begin{coro} The $i$-quantum group $\bfUjn$ is a $\Q(v)$-algebra with basis 
$$\{A(\bfj)\mid A\in\Xinrl,\bfj\in\Z^{2n+1}\},$$ which has generators 
$$ E^{\theta}_{h,h+1}({\bf 0}),\; E^{\theta}_{h+1,h}({\bf 0}),\; {O}({\pm\bfe_i}),\; \text{ for all }1\leq h\leq n,1\leq i\leq n+1,$$ and relations 
{\rm(1)} ${O}({\pm\bfe_i})A(\bfj)=v^{\pm\ro(A)_i}A(\bfj\pm\bfe_i)$
together with {\rm(2)} and {\rm(3)} in Theorem \ref{MF}. 
\end{coro}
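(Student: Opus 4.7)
The plan is to transport the algebra structure from $\Ajn$ back to $\bfUjn$ along the isomorphism $\phi^\jmath\colon \bfUjn\to\Ajn$ established in Theorem \ref{new realn}. First I would use $\phi^\jmath$ to pull back the basis $\mathfrak B=\{A(\bfj)\mid A\in\Xinl,\,\bfj\in\Z^{2n+1}\}$ for $\Ajn$, obtained in the corollary following Theorem \ref{th2}, to a $\Q(v)$-basis of $\bfUjn$; this uses only that $\phi^\jmath$ is a $\Q(v)$-linear bijection. By Theorem \ref{th2}, the elements $E^\theta_{h,h+1}(\bfl)$, $E^\theta_{h+1,h}(\bfl)$, and $O(\pm\bfe_i)$ generate $\Ajn$, so their preimages under $\phi^\jmath$ (which, abusing notation, we denote by the same symbols) generate $\bfUjn$.

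Next I would read off the relations. Relation (1) of the corollary is the specialisation of the first identity in Theorem \ref{MF}(1) obtained by taking $\bfj=\pm\bfe_i$: then $\ro(A)\centerdot(\pm\bfe_i)=\pm\ro(A)_i$, giving $O(\pm\bfe_i)A(\bfj')=v^{\pm\ro(A)_i}A(\bfj'\pm\bfe_i)$. Relations (2) and (3) are transcribed verbatim from Theorem \ref{MF}(2)(3). Since $\phi^\jmath$ is an algebra isomorphism, the same identities hold in $\bfUjn$ on the pulled-back basis and generators.

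The last thing to verify is that the listed data actually constitute a presentation, i.e.\ that the multiplication table of the algebra is recoverable from (1), (2), (3). This is essentially formal: the formulas express every product of a generator with an arbitrary basis element $A(\bfj)$ as an explicit $\Q(v)$-linear combination of basis elements, so by induction on the length of a word in the generators, together with $\Q(v)$-bilinearity, every product $A(\bfj)\cdot A'(\bfj')$ is determined. Because $\bfUjn\cong\Ajn$ and the generators span the algebra, this fixes $\bfUjn$ up to isomorphism as a $\Q(v)$-algebra on the given basis.

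I do not anticipate a real obstacle: all the substantive content — the triangular relation of Lemma \ref{TR}, the subalgebra property of Theorem \ref{th2}, and the injectivity of $\phi^\jmath$ via the commutative diagram of Theorem \ref{new realn} — has already been dispatched. The corollary is a restatement of the regular representation of $\bfUjn$ in ``basis-plus-action-of-generators'' form, and the proof amounts to assembling Theorems \ref{th2}, \ref{MF}, and \ref{new realn}.
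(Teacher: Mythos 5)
The proposal is correct and follows the paper's argument exactly: the paper likewise obtains the corollary by transporting the basis $\mathfrak B$, the generators, and the multiplication formulas of Theorem \ref{MF} across the isomorphism $\phi^\jmath$ of Theorem \ref{new realn}, and then notes that formulas (1)--(3) determine the left regular representation (and hence the algebra structure) since the generators span. Your reading of relation (1) as the specialisation $\bfj=\pm\bfe_i$ of Theorem \ref{MF}(1) is also the intended one.
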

In other words, the multiplication formulas (1)--(3) give rise to the matrix form of the regular representation of $\bfUjn$.

\begin{remark}We expect to investigate applications of this new realisation for $\bfUjn$. For example, the existence of PBW type bases for $U^\jmath(n)_\sZ$ seems not clear. It is very plausible to construct such a basis by using the devided powers of ``root vectors'' $E_{i,j}^\theta(\bfl)$ for all $1\leq j<i\leq 2n+1$ and to establish a triangular relation with the integral monomial basis discussed in Remark \ref{IMB}(4).
\end{remark}

\bigskip
{\bf Acknowledgement.} The first author would like to thank  Weiqiang Wang, Yiqiang Li, and Zhaobin Fan
 for many helpful discussions and comments during the writing of the paper. He also thanks Dan Nakano for his questions and comments at the Charlottesville conference in March 2020, which motivated the current work.


\begin{thebibliography}{9999}\vskip0pt

\bibitem[\sf BW18]{BW} H. Bao, W. Wang, {\it A new approach to Kazhdan-Lusztig theory of type B via quantum symmetric pairs,} Ast\'erisque {\bf 402}, 2018, vii+134pp.

\bibitem[\sf BKLW18] {BKLW}H. Bao, J. Kujawa, Y. Li and W. Wang, {\it Geometric Schur Duality of Classical Type}, Transf. Groups {\bf 23} (2018), 329--389.

\bibitem[\sf BLM90]{BLM} A.A. Beilinson, G. Lusztig, R. MacPherson, {\it A geometric
 setting for the quantum deformation of $GL_n$}, Duke
 Math. J. {\bf 61} (1990), 655-677.

\bibitem[\sf CLW]{CLW}  X. Chen, M. Lu, W. Wang, {\em A Serre presentation for the $\imath$quantum groups},
Transf. Groups, to appear.

\bibitem[\sf DDPW08]{DDPW08} B. Deng, J. Du, B. Parshall, and J.-P Wang, {\it Finite Dimensional Algebras and Quantum Groups}, Math. Surveys and Monographs {\bf 150}, Amer. Math. Soc. (2008). 



\bibitem[\sf DJ89]{DJ89} R. Dipper and G. James, {\em The $q$-Schur algebra}, Proc. London Math. Soc. {\bf 59} (1989),
23-50.


\bibitem[\sf DF10]{DF} J. Du and Q. Fu, {\em A modified BLM approach to quantum affine $\mathfrak{gl}_n$}, Math. Z. {\bf 266} (2010), 747--781.

\bibitem[\sf DG14]{DG}
J. Du, H. Gu, \emph{A realization of the quantum supergroup $\mathbf U(\mathfrak{gl}_{m|n})$}, J. Algebra {\bf 404} (2014), 60--99.

\bibitem[\sf DPS]{DPS} J. Du, B. Parshall, and L. Scott, {\em An exact category approach to Hecke endomorphism algebras}, in preparation.

\bibitem[\sf DR00]{DR} J. Du and H. Rui, {\em Ariki-Koike algebras with semisimple bottoms}, Math. Z. {\bf 234} (2000), 807--830.


\bibitem[\sf LL]{LL} C.-J. Lai and L. Luo, {\em Schur algebras and quantum symmetric pairs with unequal parameters}, arXiv:1808.00938.


\bibitem[\sf LNX]{LNX} C.-J. Lai, D. Nakano, and Z. Xiang, {\it On $q$-Schur algebras corresponding to Hecke algebras of type $B$}, Transformation Groups, (2020), DOI 10.1007/s00031-020-09628-7.



\bibitem[\sf LW]{LW} L. Luo and W. Wang, {\em The $q$-Schur algebras and $q$-Schur dualities of finite type}, J. Inst. Math. Jussieu, to appear.

\bibitem[\sf Le03]{Le03} G. Letzter, {\em Quantum symmetric pairs and their zonal spherical functions}, Tranf. Groups {\bf 8} (2003), 261--292.
\end{thebibliography}
 \end{document}